\date{14 September 2014}
\theoremstyle{plain}
\newtheorem{theorem}{Theorem}[section]
\newtheorem{corollary}[theorem]{Corollary}
\newtheorem{lemma}[theorem]{Lemma}
\newtheorem{proposition}[theorem]{Proposition}
\theoremstyle{definition}
\newtheorem{remark}[theorem]{Remark}
\newcommand{\s}{\sigma}
\newcommand{\G}{\Gamma}
\newcommand{\cM}{\mathcal{M}}
\newcommand{\cU}{\mathcal{U}}
\newcommand{\cV}{\mathcal{V}}
\newcommand{\cX}{\mathcal{X}}
\newcommand{\LL}{\mathbb{L}}
\newcommand{\RR}{\mathbb{R}}
\newcommand{\CC}{\mathbb{C}}
\newcommand{\PP}{\mathbb{P}}
\newcommand{\ZZ}{\mathbb{Z}}
\newcommand{\inc}{\hookrightarrow}
\newcommand{\imat}{\sqrt{-1}}
\newcommand{\too}{\longrightarrow}
\newcommand{\x}{\times}
\newcommand{\la}{\langle}
\newcommand{\ra}{\rangle}
\newcommand{\frS}{{\frak S}}
\newcommand{\Id}{\mathrm{Id}}
\newcommand{\id}{\mathrm{Id}}
\newcommand{\Sym}{\mathrm{Sym}}
\newcommand{\VarC}{\cV ar_\CC}
\DeclareMathOperator{\Hom}{Hom}
 \DeclareMathOperator{\diag}{diag}
\DeclareMathOperator{\Gr}{Gr}
\DeclareMathOperator{\GL}{GL}
\DeclareMathOperator{\PGL}{PGL}
\DeclareMathOperator{\SL}{SL}
\DeclareMathOperator{\Stab}{Stab}
\DeclareMathOperator{\Spec}{Spec}
\DeclareMathOperator{\tr}{tr}
\title[$\SL(3,\CC)$-character variety of torus knots]{Geometry of the $\SL(3,\CC)$-character variety \\ of torus knots}
\subjclass[2000]{Primary: 14D20. Secondary: 57M25, 57M27}
\keywords{Torus knot, characters, representations}
\author[V. Mu\~{n}oz]{Vicente Mu\~{n}oz}
\address{Facultad de Matem\'aticas, Universidad Complutense de Madrid,
Plaza Ciencias 3, 28040 Madrid, Spain}
\address{Instituto de Ciencias Matem\'aticas (CSIC-UAM-UC3M-UCM),
C/ Nicol\'as Cabrera 15, 28049 Madrid, Spain}
\email{vicente.munoz@mat.ucm.es}
\author[J. Porti]{Joan Porti}
\address{Departament de Matem\`atiques. Universitat Aut\`onoma de Barcelona, 08193 Bellaterra. Spain}
\email{porti@mat.uab.cat}
\begin{document}

\begin{abstract}
  Let $G$ be the fundamental group of the complement of the
  torus knot of type $(m,n)$. This has a presentation $G=\la x,y \, |
  \, x^m=y^n\ra$. We find the geometric description of the character
  variety $X(G)$ of characters of representations of $G$ into $\SL(3,\CC)$, $\GL(3,\CC)$ and $\PGL(3,\CC)$.
\end{abstract}

\maketitle

\section{Introduction}\label{sec:introduction}

Since the foundational work of Culler and Shalen \cite{CS}, 
the varieties of $\SL(2,\CC)$-characters have been extensively studied. 
Given a manifold $M$, the variety of representations of
$\pi_1(M)$ into $\SL(2,\CC)$ and the variety of characters of such representations both
contain information of the topology of $M$. This is specially
interesting for $3$-dimensional manifolds, where the fundamental
group and the geometrical properties of the manifold are 
strongly related. This can be used to study knots $K\subset S^3$, by analysing the
$\SL(2,\CC)$-character variety of the fundamental group of the knot complement
$S^3-K$ (these are called \emph{knot groups}). 

For a very different reason, the case of fundamental groups of 
surfaces has also been extensively analysed \cite{hausel-thaddeus, hitchin, lomune}, 
in this situation focusing more on geometrical properties of the moduli
space in itself (cf.\ non-abelian Hodge theory).

However, much less is known of the character varieties for other groups, notably
for $\SL(r,\CC)$ with $r\geq 3$. The character varieties for $\SL(3,\CC)$ for
free groups have been described in \cite{Lawton, LM}. In the case of $3$-manifolds,
little has been done. In this paper, we study the case of the torus knots $K_{m,n}$ 
of any type $(m,n)$, which are the first family of knots where the computations are
rather feasible. The case of $\SL(2, \CC)$-character varieties of torus knots was carried
out in  \cite{Martin-Oller, Munoz}. For $\SL(3,\CC)$, the torus knot $K_{2,3}$ has
been done in \cite{HeusenerP}. 

In the case of $\SL(2,\CC)$-character varieties of torus knot groups, only one-dimensional 
irreducible components appear. However, when we move to $\SL(3,\CC)$, we see components of 
different dimensions. In the case of torus knots, we shall see components of dimension $4$ and
of dimension $2$. Our main result is an explicit geometrical description of the $\SL(3,\CC)$-character
variety of torus knots.

\begin{theorem}\label{thm:main}
Let $m,n$ be coprime positive integers. By swapping them if necessary, assume that $n$ is odd.
The $\SL(3,\CC)$-character variety $\cX_3$ of the torus knot $K_{m,n}\subset S^3$ is stratified into
the following components:
 \begin{itemize}
 \item One component consisting of totally reducible representations, isomorphic to $\CC^2$.
 \item $[\frac{n-1}2][\frac{m-1}2]$ components consisting of  partially reducible representations, each isomorphic to
 $(\CC-\{0,1\})\times \CC^*$. 
 \item If $n$ is even, there are $(m-1)/2$ extra components consisting of partially reducible representations, each 
 isomorphic to $\{(u,v)\in \CC^2 | v\neq 0, v\neq u^2\}$. 
 \item  $\frac1{12}(n-1)(n-2)(m-1)(m-2)$ componens of dimension $4$, consisting of irreducible
representations, all isomorphic to each other, and which are described explicitly in Remark \ref{rem:otro-mas}. 
\item $\frac12 (n-1)(m-1)(n+m-4)$ components consisting of irreducible representations, 
each isomorphic to $(\CC^*)^2-\{x+y=1\}$.
\end{itemize}
Moreover $m,n$ can be recovered from the above information.
\end{theorem}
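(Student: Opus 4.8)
The plan is to reduce everything to linear algebra and then stratify $\cX_3$ by representation type. A homomorphism $\rho\colon G\to\SL(3,\CC)$ is the same datum as a pair $(A,B)=(\rho(x),\rho(y))$ with $A,B\in\SL(3,\CC)$ and $A^m=B^n$, and $\cX_3$ is the GIT quotient of the resulting representation variety by the conjugation action. The element $z=x^m=y^n$ is central in $G$, so $\rho(z)=A^m=B^n$ commutes with the whole image; in particular the minimal polynomials of $A$ and $B$ divide separable polynomials (of the form $t^m-c$, $t^n-c$ on each block where $\rho(z)$ is scalar), so $A$ and $B$ are always diagonalizable. First I would record that, for a semisimple $\rho$ (the only ones that matter for the character variety), the eigenvalues of $A$ are $m$-th roots and those of $B$ are $n$-th roots of the corresponding eigenvalues of $\rho(z)$, each triple having product $1$. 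The discrete choice of which roots of unity occur will index the strata, while the continuous moduli inside each stratum come from the relative position of the eigenspaces of $A$ and $B$.

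Next I would dispose of the reducible strata. If $\rho$ is totally reducible it is a sum $\l_1\oplus\l_2\oplus\l_3$ of characters $G\to\CC^*$; since $G^{ab}\cong\ZZ$ one has $\Hom(G,\CC^*)\cong\CC^*$, and the constraint $\l_1\l_2\l_3=1$ together with passage to symmetric functions of the three values identifies this locus with $\CC^2$ (the coefficients of a monic cubic with unit constant term), giving the first component. For the partially reducible strata I would write $\rho=\l\oplus\t$ with $\l$ a character and $\t$ an irreducible $2$-dimensional representation. The scalar $\t(z)=c\,I_2$ is now free in $\CC^*$ (unlike the irreducible case below), and the determinant condition forces $\l$ to be the character with $\l(z)=c^{-2}$, so $\l$ is determined by $c$ and the discrete eigenvalue data. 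The continuous moduli are thus this scale $c\in\CC^*$ together with the internal cross-ratio of $\t$ (the relative position of the eigenlines of $\t(x)$ and $\t(y)$, living in $\CC-\{0,1\}$), which explains the shape $(\CC-\{0,1\})\x\CC^*$; counting the admissible discrete configurations modulo the Weyl group $S_2$ then yields $[\tfrac{n-1}2][\tfrac{m-1}2]$, with the exceptional family appearing in the even case.

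For the irreducible strata, Schur's lemma forces $\rho(z)=\o I$ with $\o^3=1$, so $A^m=B^n=\o I$, and I would split according to the regularity of $A$ and $B$. When both are regular semisimple (three distinct eigenvalues) I would fix $A$ diagonal and move $B$ over its conjugacy class; that class has dimension $6$ and its quotient by $\Stab(A)$ (a maximal torus, dimension $2$) is $4$-dimensional, giving the $4$-dimensional components, on which irreducibility holds on a dense open subset and whose explicit model is the one in Remark \ref{rem:otro-mas}. When exactly one of $A,B$ has a repeated eigenvalue, say $A$ of eigenvalue type $(2,1)$, its centralizer in $\PGL(3,\CC)$ has dimension $4$, so the same count gives a $2$-dimensional moduli; a direct parametrization of the eigenline/eigenplane of $A$ together with the position of $B$ realizes it as $(\CC^*)^2-\{x+y=1\}$. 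The numbers $\tfrac1{12}(n-1)(n-2)(m-1)(m-2)$ and $\tfrac12(n-1)(m-1)(n+m-4)$ then come from enumerating the eigenvalue data: for the $4$-dimensional pieces one counts unordered triples of distinct $m$-th (resp.\ $n$-th) roots of $\o$ with product $1$, and for the $2$-dimensional pieces one adds the two sub-families (double eigenvalue in $A$ versus in $B$), which accounts for the symmetric factor $(n+m-4)$.

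The main obstacle, I expect, will be the bookkeeping rather than any single hard idea: one must enumerate the admissible eigenvalue tuples modulo the Weyl-group action while correctly handling the three choices of $\o\in\mu_3$—these are permuted by tensoring $\rho$ with the order-$3$ characters of $G$, which is also why the $4$-dimensional components turn out mutually isomorphic—ensuring neither overcounting nor collision between the $(2,1)$-type and the regular strata, and then proving that each stratum is genuinely a connected component by analysing the closure relations in the GIT quotient. Finally, the recovery of $(m,n)$ is a formal consequence: the multiset of dimensions together with the numerical invariants $[\tfrac{n-1}2][\tfrac{m-1}2]$, $\tfrac1{12}(n-1)(n-2)(m-1)(m-2)$ and $\tfrac12(n-1)(m-1)(n+m-4)$ are symmetric in $m$ and $n$ and determine their symmetric functions, hence the unordered pair $\{m,n\}$; since $K_{m,n}=K_{n,m}$ this is exactly the data of the knot, and the parity normalization fixes the labelling.
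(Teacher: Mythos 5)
Your overall strategy is the same as the paper's: stratify by representation type, use the central element $z=x^m=y^n$ and Schur's lemma to force diagonalizability on semisimple representations (the paper's Lemma \ref{lem:red}), identify the totally reducible locus with $\CC^2$ via symmetric functions (Proposition \ref{prop:3}), reduce the partially reducible stratum to irreducible rank-two characters (Proposition \ref{prop:8.1}), and parametrize the irreducible strata by discrete eigenvalue data together with eigenspace configurations (Theorem \ref{thm:max}, Propositions \ref{prop:Xtau} and \ref{prop:8.2}). Your recovery of $(m,n)$ is in fact more direct than the paper's route through the K-theory class (Corollary \ref{cor:n,m}) and it does work: with $a=(n-1)(m-1)$, $b=(n-2)(m-2)$ one has $n+m-4=a-b-1$, so the two irreducible-component counts determine $a$ and $b$, hence $nm$ and $n+m$.

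There are, however, three concrete gaps. First, your uniform picture of a partially reducible component as ``cross-ratio times scale'', i.e.\ $(\CC-\{0,1\})\times\CC^*$, cannot by itself yield the exceptional geometry $\{(u,v)\in\CC^2 \mid v\neq 0,\ v\neq u^2\}$; you name that family but supply no mechanism for it. The paper gets it from $\tilde\cX_2\cong(\cX_2\times\CC^*)/\mu_2$: the $\mu_2$-twist preserves exactly the $(m-1)/2$ components of $\cX_2^*$ with eigenvalue pair $\{\pm\sqrt{-1}\}$ (possible only in the even case) and acts there by $(r,\lambda)\mapsto(1-r,-\lambda)$, whose quotient is the stated surface (Propositions \ref{prop:7.2}, \ref{prop:7.3}); without this residual identification your parametrization assigns the wrong geometry to those components. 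Second, your stated mechanism for the mutual isomorphism of the four-dimensional components --- tensoring by order-$3$ characters permuting the three values of $\omega$ --- is false whenever $3\mid mn$, since the twist multiplies $\varpi$ by $\gamma^{mn}=1$ and so preserves each sector (e.g.\ for $(m,n)=(5,3)$, where there are two maximal components and the twist fixes both sectors); and in any case a $\mu_3$-action cannot identify all $\frac1{12}(n-1)(n-2)(m-1)(m-2)$ components, which generally number more than three. The paper instead proves mutual isomorphy by exhibiting one model $\cM/(T\x_D T)$ for every eigenvalue datum, via the matrix of $B$-eigenvectors in an $A$-eigenbasis modulo the two torus rescalings (Proposition \ref{prop:Xtau}); you gesture at this model and should make it, not the twisting, carry the argument. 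Third, your case analysis (both matrices regular, or exactly one non-regular) silently omits ruling out the case where both $A$ and $B$ have a repeated eigenvalue: two $2$-dimensional eigenspaces in $\CC^3$ meet in a line invariant under both matrices, forcing reducibility (proof of Proposition \ref{prop:8.2}); without this observation your list of irreducible strata is not known to be exhaustive. Finally, note that the count $\frac1{12}(n-1)(n-2)(m-1)(m-2)$ is not a routine enumeration: the paper needs the averaging argument over $\varpi\in\mu_3$ and over shifted ranges in Theorem \ref{thm:max}, so identifying the enumeration problem, as you do, is not yet solving it.
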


We also describe geometrically how these components fit into the whole of the $\SL(3,\CC)$-character variety,
that is what is the closure of each of the strata in $\cX_3$. In particular, the total space is connected.
In Section \ref{sec:GL3} we give also the corresponding result for  the $\GL(3,\CC)$-character variety
and the $\PGL(3,\CC)$-character variety of the torus knots. 
Finally, we compute the K-theory class (in the Grothendieck ring of varieties) of all these character varieties.

\noindent \textbf{Acknowledgements.} We are grateful to Marina Logares, Jorge Mart\'{\i}n-Morales, Maite Lozano,
Javier Mart\'{\i}nez and Sean Lawton for
their interest in this work. The contribution of Henrik R\"uping in Mathoverflow (mathoverflow.net/questions/18225) has
been useful in our argument in  Proposition \ref{prop:quotient}. The second author is partially supported by Micinn through
grant MTM2012-34834.

\section{Moduli of representations and character varieties}\label{sec:character}

Let $\G$ be a finitely presented group, and let $G=\SL(r,\CC)$, $\GL(r,\CC)$ or $\PGL(r,\CC)$. 
A \textit{representation} of $\G$ in $G$ is a homomorphism $\rho: \G\to G$.
Consider a presentation $\G=\la x_1,\ldots, x_k | r_1,\ldots, r_s \ra$. Then $\rho$ is completely
determined by the $k$-tuple $(A_1,\ldots, A_k)=(\rho(x_1),\ldots, \rho(x_k))$
subject to the relations $r_j(A_1,\ldots, A_k)=\id$, $1\leq j \leq s$. The space
of representations is
 \begin{eqnarray*}
 R(\G,G) &=& \Hom(\G, G) \\
  &=& \{(A_1,\ldots, A_k) \in G^k \, | \,
 r_j(A_1,\ldots, A_k)=\id, \,  1\leq j \leq s \}\subset G^{k}\, .
 \end{eqnarray*}
Therefore $R(\G,G)$ is an affine algebraic set.

We say that two representations $\rho$ and $\rho'$ are
equivalent if there exists $P\in G$ such that $\rho'(g)=P^{-1} \rho(g) P$,
for every $g\in G$. This corresponds to a change of basis in $\CC^r$.
Note that the action of $G$ descends to an action of the
projective group $PG=\PGL(r,\CC)$.
This produces an action of $\PGL(r,\CC)$ in $R(\G,G)$. The moduli space of representations
is the GIT quotient
 $$
 M(\G,G) = R(\G,G) // G \, .
 $$
Recall that by definition of GIT quotient for an affine variety, if we write
$ R(\G,G)=\Spec A$, then $M (\G,G)=\Spec A^{G}$.

A representation $\rho$ is \textit{reducible} if there exists some proper 
subspace $V\subset \CC^r$ such that for  all $g\in G$ we have 
$\rho(g)(V)\subset V$;  otherwise $\rho$ is
\textit{irreducible}. Note that if $\rho$ is reducible, then let  $V\subset \CC^r$
an invariant subspace, and consider a complement $\CC^r =V\oplus W$. 
Let $\rho_1=\rho|_V$ and $\rho_2$ the induced representation on the
quotient space $W=\CC^r/V$. Then we can write $\rho=\begin{pmatrix} \rho_1 & 0\\
f& \rho_2\end{pmatrix}$, where $f: \G \to \Hom(W,V)$. Take $P_t=
\begin{pmatrix} t^{r-k}\Id & 0\\ 0& t^{-k}\Id \end{pmatrix}$, where $k=\dim V$.
Then $P_t^{-1}\rho P_t=\begin{pmatrix} \rho_1 & 0\\
t^{r} f& \rho_2\end{pmatrix} \to \rho'=\begin{pmatrix} \rho_1 & 0\\
0& \rho_2\end{pmatrix}$, when $t\to 0$. Therefore $\rho$ and $\rho'$
define the same point in the quotient $M(\G,G)$. Repeating this, we can
substitute any representation $\rho$ by some $\tilde\rho=\bigoplus \rho_i$,
where all $\rho_i$ are irreducible representations. We call this process 
\emph{semi-simplification}, and $\tilde\rho$ a semisimple
representation; also $\rho$ and $\tilde\rho$ are called
S-equivalent. The space $M(\G,G)$ parametrizes semisimple representations
\cite[Thm.~ 1.28]{LuMa}.

Suppose now that $G=\SL(r,\CC)$.
Given a representation $\rho: \G\to G$, we define its
\textit{character} as the map $\chi_\rho: \G\to \CC$,
$\chi_\rho(g)=\tr \rho (g)$. Note that two equivalent
representations $\rho$ and $\rho'$ have the same character.
There is a character map $\chi: R(\G,G)\to \CC^\G$, $\rho\mapsto
\chi_\rho$, whose image
 $$
 X(\G,G)=\chi(R(\G,G))
 $$
is called the \textit{character variety of $\G$}. Let us give
$X(\G,G)$ the structure of an algebraic variety. The traces $\chi_\rho$
span a subring $B\subset A$. Clearly $B\subset A^{G}$. As
$A$ is noetherian, we have that $B$ is a finitely generated $\CC$-algebra. Hence
there exists a collection $g_1,\ldots, g_a$ of elements of
$G$ such that $\chi_\rho$ is determined by $\chi_\rho(g_1),\ldots,
\chi_\rho(g_a)$, for any $\rho$. Such collection gives a map
 $$
  \Psi:R(\G,G)\to \CC^a\, , \qquad
  \Psi(\rho)=(\chi_\rho(g_1),\ldots, \chi_\rho(g_a))\, ,
 $$
and $X(\G,G)\cong \Psi(R(\G,G))$. This endows $X(\G,G)$ with
the structure of an algebraic variety, which is independent of the chosen collection.
The natural algebraic map  
 $$
 M(\G,G)\to X(\G,G)
 $$ 
is an  isomorphism (see Chapter 1 in \cite{LM}). This is the same as to say that 
$B=A^G$, that is, the ring of invariant polynomials is generated by characters.

\section{Grothendieck ring of varieties} \label{sec:KVar}

Let $\VarC$ be the category of quasi-projective complex varieties.
We denote by $K (\VarC)$ the \emph{Grothendieck ring} of
$\VarC$. This is the abelian group generated by elements $[Z]$, for
$Z \in \VarC$, subject to the relation $[Z]=[Z_1]+[Z_2]$ whenever $Z$
can be decomposed as a disjoint union $Z=Z_1\sqcup Z_2$ of a closed and
a Zariski open subset. 

There is a naturally defined product in $K (\VarC)$ given by $[Y]\cdot
[Z]=[Y\x Z]$. Note that if $\pi:Z\to Y$ is an algebraic fiber
bundle with fiber $F$, which is locally trivial in the Zariski
topology, then $[Z]=[F]\cdot [Y]$.

We denote by $\LL=[\CC]$ the \emph{Lefschetz object} in $K (\VarC)$. Clearly $\LL^k=[\CC^k]$. 
The following result will be useful later on.

\begin{proposition} \label{prop:quotient}
Let $\mu_r$ act on $X=(\CC^*)^k$ by $(t_1,\ldots,t_k) \mapsto (\xi^{a_1}t_1,\ldots, \xi^{a_k}t_k)$,
for some weights $a_1,\ldots,a_k\in \ZZ$, $\xi=e^{2\pi \sqrt{-1}/r}$. Then $X/\mu_r \cong (\CC^*)^k$.

As a consequence, for the same action on $Y=\CC^k$, we have $[Y/\mu_r]=\LL^r=[Y]$.
\end{proposition}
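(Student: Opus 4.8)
The plan is to treat the $\mu_r$-action as multiplication by a finite subgroup of the torus and to pass to invariant Laurent monomials. For the first statement, write the coordinate ring of $X=(\CC^*)^k$ as the Laurent polynomial ring $A=\CC[t_1^{\pm1},\ldots,t_k^{\pm1}]$, whose $\CC$-basis is the monomials $t^m=t_1^{m_1}\cdots t_k^{m_k}$ indexed by the character lattice $M=\ZZ^k$. The generator of $\mu_r$ scales $t^m$ by $\xi^{\,a\cdot m}$, where $a\cdot m=\sum a_i m_i$, so $t^m$ is invariant exactly when $a\cdot m\equiv 0\pmod{r}$. Hence the invariant ring $A^{\mu_r}$ is spanned by the monomials indexed by the sublattice
\[
L=\{m\in\ZZ^k \mid a\cdot m\equiv 0\ \pmod{r}\}\subseteq \ZZ^k,
\]
that is, $A^{\mu_r}=\CC[L]$ is the group algebra of $L$.

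Next I would observe that $L$ is the kernel of the homomorphism $\ZZ^k\to\ZZ/r\ZZ$, $m\mapsto a\cdot m$, so $\ZZ^k/L$ embeds in $\ZZ/r\ZZ$ and $L$ has finite index in $\ZZ^k$; by the structure theorem it is therefore free abelian of rank $k$, and an explicit $\ZZ$-basis can be produced from the corresponding integer matrix. Choosing such a basis identifies $\CC[L]$ with a Laurent polynomial ring in $k$ variables, whence, using that the GIT quotient is $\Spec A^{\mu_r}$,
\[
X/\mu_r=\Spec \CC[L]\cong (\CC^*)^k.
\]

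For the second statement I would stratify $Y=\CC^k$ according to which coordinates vanish: for $S\subseteq\{1,\ldots,k\}$ let $Y_S$ be the locus whose nonzero coordinates are exactly those indexed by $S$, so $Y_S\cong(\CC^*)^{|S|}$ and $Y=\bigsqcup_S Y_S$. Each $Y_S$ is $\mu_r$-stable because the action is diagonal, and the restricted action is again a diagonal $\mu_r$-action on a torus, so the first part gives $Y_S/\mu_r\cong(\CC^*)^{|S|}$. Passing to $K(\VarC)$, where $[\CC^*]=\LL-1$, and summing by the binomial theorem yields
\[
[Y/\mu_r]=\sum_{S}[Y_S/\mu_r]=\sum_{j=0}^{k}\binom{k}{j}(\LL-1)^j=\LL^k=[Y].
\]

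The lattice bookkeeping and the binomial sum are routine. The only genuine points to get right are that $A^{\mu_r}$ is exactly the group algebra of the sublattice $L$ and that a finite-index sublattice of $\ZZ^k$ is free of rank $k$, so its group algebra is again a Laurent polynomial ring; these together force the quotient torus to have full dimension $k$. I expect the main obstacle to be organizational rather than deep: ensuring that the GIT quotient of the affine torus by the finite group $\mu_r$ is correctly computed as $\Spec A^{\mu_r}$ (so that the possibly non-faithful part of the action is absorbed automatically), and that the stratification of $Y$ is genuinely by locally closed, $\mu_r$-stable pieces so that additivity of the class in the Grothendieck ring applies. Note that $Y/\mu_r$ need not be isomorphic to $\CC^k$, since it may be singular; only the equality of classes is asserted.
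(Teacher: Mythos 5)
Your proof is correct, but it takes a genuinely different route to the first statement than the paper. The paper works on the coordinate side: it first reduces to a free action, picks integers with $a_1b_1+\cdots+a_kb_k+rb=1$, splits $\ZZ_r^k\cong K\oplus\ZZ_r$, lifts generators to an integer matrix $M$, and then performs a delicate Smith-normal-form adjustment (adding multiples of $r$ to entries to merge elementary divisors) to force $\det M=1$; the resulting unimodular monomial change of variables $u_j=\prod_i t_i^{c_{ji}}$ puts the action into the normal form $(u_1,\ldots,u_k)\mapsto(u_1,\ldots,\xi u_k)$, whence $X/\mu_r\cong(\CC^*)^{k-1}\times(\CC^*/\mu_r)\cong(\CC^*)^k$. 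You instead argue on the invariant ring: $A^{\mu_r}$ is the span of the invariant Laurent monomials, i.e.\ the group algebra $\CC[L]$ of the sublattice $L=\{m\mid a\cdot m\equiv 0\pmod r\}$, which has finite index in $\ZZ^k$ and is therefore free of rank $k$, so $\Spec\CC[L]$ is again a $k$-dimensional torus. The two arguments rest on the same underlying lattice fact, but you invoke it abstractly where the paper constructs an explicit adapted basis; what you gain is brevity and robustness --- you avoid entirely the fiddly determinant manipulation (the part of the paper's argument credited to a Mathoverflow contribution) and you need no separate reduction to the faithful case, since $\Spec A^{\mu_r}$ absorbs a non-faithful action automatically --- while the paper's method produces explicit coordinates on the quotient, which is marginally more constructive. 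Your treatment of the second statement (stratifying $Y$ by the vanishing pattern of coordinates, applying the first part to each invariant torus stratum, and summing $\sum_{j=0}^{k}\binom{k}{j}(\LL-1)^j=\LL^k$) is exactly the paper's one-line argument spelled out, including the standard verification that invariant locally closed strata descend to a decomposition of the quotient; note also that your $\LL^k$ is the intended reading of the statement, whose ``$\LL^r$'' is evidently a typo.
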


\begin{proof}
If the action is not free, then it factors through some quotient of $\mu_r$, and we substitute it by the
action of the quotient group. So we can assume that the action is free. 
If $k=1$ then the result is trivially true, since the quotient $\CC^*/\mu_r$ is parametrized by
$w=t^r\in \CC^*$.

Suppose $k>1$. There are integer
numbers $b_1,\ldots,b_k,b$ such that $a_1b_1+\ldots+a_kb_k+rb=1$. Consider the quoient
 $$ 
 \ZZ_r^k \to \ZZ_r
 $$
given by $(x_1,\ldots, x_k) \mapsto a_1x_1+\ldots +a_kx_k$. This is surjective, and the choice
of $(b_1,\ldots,b_k)$ gives a splitting. Let $K$ be the kernel. Then $\ZZ_r^k\cong K \oplus \ZZ_r$.
Clearly, it must be $K\cong \ZZ_r^{k-1}$. The inclusion $K\inc \ZZ_r^k$ defines a collection 
of $k-1$ vectors $(c_{i1},\ldots, c_{ik})$, $i=1, \ldots, k-1$. The matrix 
 $$
 M=\begin{pmatrix} c_{11} & \ldots & c_{1k} \\ \vdots & & \vdots \\ c_{k-1,1} &\ldots & c_{k-1,k} \\
 b_1 & \ldots & b_k \end{pmatrix}
 $$
has determinant $d=\det(M)$, which is a unit in $\ZZ_r$. We want to modify the entries of the matrix
$M$ adding multiples of $r$ so that we get $\det(M)=1$, and so $M$ is invertible over $\ZZ$. First,
multiply the first row by an integer $x$ so that $\det(M) x\equiv 1 \pmod r$. So $M$ is still invertible
modulo $r$ and $\det(M)\equiv 1 \pmod r$. We can write $M=ABC$, where $A,C$ are invertible
over the integers, and $B=\diag(e_1,\ldots, e_k)$, $e_1|e_2|\ldots |e_k$. Note that $\prod e_i\equiv 1\pmod r$.
If we add multiples of $r$ to the entries of $B$, we do the same to the entries of $M$. So without loss
of generality, we can work with $M=B$. 

If all $e_i=1$, we have finished. Moreover, if all $e_i \equiv 1 \pmod r$, we can change each $e_i$ by
adding multiples of $r$ to arrange $e_i=1$. So now
suppose that there is one entry $e_j\neq 1 \pmod r$. As $\prod e_i\equiv 1\pmod r$, there must be another entry $e_l \neq 1
\pmod r$. Adding $r$ to $e_j$, we have that $e_j+r$ and $e_l$ are coprime (recall that $e_j|e_l$ or $e_l|e_j$, and all 
$e_i$ are coprime to $r$). Then we can diagonalize this new $M$ again, getting elemetary divisors $1$, $(e_j+r)e_l$ instead
of $e_j,e_l$. This increases the number of elements of the diagonal of $M$ equal to $1$. Repeating the
process, we can finally get all diagonal entries equal to $1$.

Now consider $u_j=t_1^{c_{j1}}\ldots t_k^{c_{jk}}$, $j=1,\ldots, k-1$, and $u_k=t_1^{b_1}\ldots t_k^{b_k}$. 
As $\det M=1$ this is a change of variables, so $(u_1,\ldots, u_k)$ parametrizes $(\CC^*)^k$, and
the action of $\mu_r$ is given by  $(u_1,\ldots, u_k) \mapsto (u_1,\ldots, \xi u_k)$. Therefore
 $X/\mu_r \cong (\CC^*)^k/\mu_r =(\CC^*)^{k-1} \x (\CC^*/\mu_r)\cong (\CC^*)^k$.

The last assertion follows by stratifying $Y$ according to how many entries are zero.
\end{proof}

\section{Character varieties of torus knots} \label{sec:torus}

Let $T^2=S^1 \times S^1$ be the $2$-torus and consider the standard embedding
$T^2\subset S^3$. Let $m,n$ be a pair of coprime positive integers. Identifying
$T^2$ with the quotient $\RR^2/\ZZ^2$, the image of the straight line $y=\frac{m}{n}
x$ in $T^2$ defines the \textit{torus knot} of type $(m,n)$, which we shall denote
as $K_{m,n}\subset S^3$ (see \cite[Chapter 3]{Rolfsen}).

For any knot $K\subset S^3$, we denote by $\G_K$ the fundamental group of the exterior
$S^3-K$ of the knot. It is known that
 $$
  \G_{m,n}= \G_{K_{m,n}} \cong \la x,y \, | \, x^n= y^m \,\ra \,.
 $$
The purpose of this paper is to describe the character variety
$X(\G_{m,n},G)$, for $G=\SL(r, \CC), \GL(r,\CC),\PGL(r,\CC)$.

We introduce the following notation:
 \begin{align*}
 \cX_r &= X(\G_{m,n}, \SL(r,\CC)), \\
 \tilde\cX_r &= X(\G_{m,n}, \GL(r,\CC)), \\
 \bar\cX_r &= X(\G_{m,n}, \PGL(r,\CC)),
 \end{align*}
dropping the reference to $m,n$ in the notation.

\begin{lemma}
\label{lemma:X1}
 $\tilde\cX_1\cong \CC^*$.
\end{lemma}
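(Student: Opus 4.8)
The plan is to exploit that $\GL(1,\CC)=\CC^*$ is abelian, so conjugation acts trivially and the character variety coincides with the representation variety itself. Thus $\tilde\cX_1$ is simply $R(\G_{m,n},\GL(1,\CC))$, the set of pairs $(a,b)\in(\CC^*)^2$ satisfying the single relation $a^n=b^m$ coming from $x^n=y^m$.

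First I would describe this set explicitly as the affine curve $C=\{(a,b)\in(\CC^*)^2\mid a^n=b^m\}$ and exhibit the parametrization $\phi:\CC^*\to C$, $\lambda\mapsto(\lambda^m,\lambda^n)$, which lands in $C$ since $(\lambda^m)^n=\lambda^{mn}=(\lambda^n)^m$. The key step is to produce an inverse using coprimality. Since $\GCD(m,n)=1$, I would fix integers $s,t$ with $sm+tn=1$ and set $\psi:C\to\CC^*$, $(a,b)\mapsto a^s b^t$. Both $\phi$ and $\psi$ are given by Laurent monomials, hence morphisms of affine varieties. The identity $\psi\circ\phi=\id$ is immediate from $sm+tn=1$; the identity $\phi\circ\psi=\id$ is where the defining relation enters: in the first coordinate $a^{sm}b^{tm}=a^{sm}(b^m)^t=a^{sm}(a^n)^t=a^{sm+tn}=a$, using $b^m=a^n$, and symmetrically for the second coordinate using $a^n=b^m$. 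This shows $\phi$ and $\psi$ are mutually inverse, whence $C\cong\CC^*$ and therefore $\tilde\cX_1\cong\CC^*$.

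There is no serious obstacle here; the only point requiring genuine care is the direction $\phi\circ\psi=\id$, which is not purely formal and actually uses the relation $a^n=b^m$ and not merely the Bézout identity $sm+tn=1$. Conceptually, the same conclusion also follows from computing the abelianization $\G_{m,n}^{\mathrm{ab}}\cong\ZZ$ (since $(n,-m)$ is a primitive vector in $\ZZ^2$, the quotient $\ZZ^2/\la(n,-m)\ra$ is infinite cyclic) together with $\Hom(\ZZ,\CC^*)\cong\CC^*$; I would mention this as an alternative, more structural phrasing that makes the coprimality hypothesis transparent.
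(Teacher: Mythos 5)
Your proof is correct and follows essentially the same route as the paper: the paper also identifies $\tilde\cX_1$ with the curve $\{\lambda^n=\mu^m\}$ in $(\CC^*)^2$ and parametrizes it by $t\mapsto(t^m,t^n)$, invoking coprimality for the existence and uniqueness of $t$. Your Bézout-based inverse $(a,b)\mapsto a^s b^t$ simply makes that uniqueness argument explicit, and your verification that $\phi\circ\psi=\id$ genuinely uses the relation $a^n=b^m$ is a worthwhile detail the paper leaves implicit.
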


\begin{proof}
Let $(\lambda,\mu)\in \tilde\cX_1$. Then $\lambda^n=\mu^m$,
so there exists a unique $t\in \CC^*$ such that $\lambda=t^m,\mu=t^n$ (here we
use that $m,n$ are coprime).
This means that $\tilde\cX_1\cong\CC^*$ via $t\mapsto (t^m,t^n)$.
\end{proof}

There is a map $\det: \tilde\cX_r \to \tilde\cX_1$ given by $(\det \rho)(g)=\det (\rho(g))$, 
for any $g\in\G$. Then $\cX_r=\det^{-1}(\mathbf{1})$, where $\mathbf{1}$ is the
trivial character.
Otherwise said, if $\rho=(A,B)\in \tilde\cX_r$ then $(\det A, \det B)\in \tilde\cX_1$.
Here $\det A=t^m, \det B=t^n$, for some $t\in\CC^*$. We shall write
$\det \rho=t$. So $(A,B)\in\cX_r$ when $t=1$.

There is an action of $\CC^*$ on $\tilde\cX_r$ given by
 $$
 \gamma\cdot (A,B) =(\gamma^m A,\gamma^n B).
 $$ 
Note that $\det(\gamma\cdot\rho)=\gamma^r \,\det(\rho)$. The kernel of 
$\CC^* \x \cX_r\to \tilde\cX_r$ is given by the $r$-th roots of unity $\mu_r$. So
there is an isomorphism
 \begin{equation}\label{eqn:1}
  \tilde\cX_r  \cong (\cX_r \x \CC^*)/ \mu_r .
 \end{equation}

Now let $([A],[B])\in \bar\cX_r$, that is, $[A],[B]\in \PGL(r,\CC)$ with $[A^m]=[B^n]$.
There is a surjective map $\SL(r,\CC)\to \PGL(r,\CC)$ with kernel $\mu_r$. So we 
can assume $(A,B)\in \SL(r,\CC)$, and $A^n=\lambda B^m$ for some $\lambda\in \CC^r$.
Take determinants, we have $\lambda^r=1$. The matrices $A,B$ are well-defined
up to multiplication by $r$-th roots of unity. Let $\epsilon, \varepsilon \in \mu_r$. Then
$(\epsilon A)^n=\lambda' (\varepsilon B)^m$ with $\lambda'=\lambda \epsilon^n \varepsilon^{-m}$.
As $m,n$ are coprime, we can arrange $\lambda'=1$ by choosing $\epsilon,\varepsilon$ suitably. 
Also, if $\lambda=1$ then $\lambda'=1$ means that $\epsilon=t^m$, $\varepsilon=t^n$ with $t\in\mu_r$.
That is, we have the isomorphism
 \begin{equation}\label{eqn:2}
  \bar\cX_r \cong \cX_r/ \mu_r\,.
 \end{equation}
Equivalently, 
  $$
  \bar\cX_r \cong \tilde\cX_r/\CC^* \, .
 $$
Note that in particular this means that the representations of $\Gamma_{m,n}$ in $\PGL(r,\CC)$
all lift to $\SL(r,\CC)$ (this is not true for all fundamental groups of $3$-manifolds).
 
In \cite{Martin-Oller} the character variety $\cX_2$ is described by
finding a set of equations satisfied by the traces of the matrices of the
images by the representation. In \cite{Munoz} the same variety 
$\cX_2$ is described by a geometric method based on the
study of eigenvectors and eigenvalues of the matrices. Here we shall
extend the latter to study the varieties $\cX_3$, $\tilde\cX_3$, $\bar\cX_3$.

\section{Stratification of the character variety} \label{sec:r}

We denote by $\pi=(r_1,\stackrel{(a_1)}{\ldots}, r_1, \ldots,r_s,\stackrel{(a_s)}{\ldots}, r_s)$ a partition of $r$, that
is $a_1r_1+\ldots+a_sr_s=r$, $r_1>\ldots >r_s>0$, $a_j\geq 1$. Let $\Pi_r$
be the set of all partitions of $r$. 
We decompose the character variety
 $$
 \tilde\cX_r=\bigsqcup_{\pi\in \Pi_r} \tilde X_{\pi}\, ,
 $$
into locally closed subvarieties, where $\tilde X_{\pi}$ corresponds to 
representations 
 \begin{equation}\label{eqn:rho}
 \rho=\bigoplus_{t=1}^s \bigoplus_{l=1}^{a_t}  \rho_{tl} , \qquad
 \rho_{tl}: \G \too \GL(r_t,\CC).
 \end{equation}
Also
 $$
 \cX_r=\bigsqcup_{\pi\in \Pi_r}  X_{\pi}\, ,
 $$
where $X_\pi=\tilde X_\pi  \cap \cX_r$, that is, consisting of those (\ref{eqn:rho}) with
$\prod_{t,l} \det \rho_{tl}(g) =1$.

For $\PGL(r,\CC)$-representations, we have 
 $$
 \bar\cX_r=\bigsqcup_{\pi\in \Pi_r}  \bar X_{\pi}\, ,
 $$
with $\bar X_{\pi}$ the image of $X_\pi$ under the projection $\cX_r\to \bar\cX_r$.

The irreducible representations correspond to $\pi_0=(r)$. We
denote $\cX_r^*=X_{\pi_0}$,  $\tilde\cX_r^*=\tilde X_{\pi_0}$ and 
$\bar\cX_r^*=\bar X_{\pi_0}$.

\begin{proposition} \label{prop:2}
We have $\tilde X_{\pi}= \prod_{t=1}^s \Sym^{a_t}  \tilde \cX_{r_t}^*$. \hfill $\Box$
\end{proposition}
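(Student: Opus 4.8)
The plan is to use the identification $M(\G,\GL(r,\CC))\cong \tilde\cX_r$ from Section~\ref{sec:character}, under which points of $\tilde\cX_r$ correspond to isomorphism classes of semisimple representations $\G\to\GL(r,\CC)$. Each such class decomposes as a direct sum of irreducibles, and by Jordan--H\"older the resulting multiset of irreducible summands is an invariant of the class. By construction $\tilde X_\pi$ consists precisely of those classes having $a_t$ irreducible summands of dimension $r_t$ for each $t$. Since an irreducible representation $\G\to\GL(r_t,\CC)$ is exactly a point of $\tilde\cX_{r_t}^*$, and summands of different dimensions can never be interchanged, the natural candidate for the isomorphism records, for each $t$, the unordered $a_t$-tuple of dimension-$r_t$ summands, landing in $\prod_{t=1}^s \Sym^{a_t}\tilde\cX_{r_t}^*$.

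To realize this algebraically I would first build the direct-sum morphism
$$\Phi\colon \prod_{t=1}^s\big(\tilde\cX_{r_t}^*\big)^{a_t}\too \tilde\cX_r,\qquad \big((\rho_{tl})_l\big)_t\longmapsto\Big[\bigoplus_{t,l}\rho_{tl}\Big].$$
This is a well-defined morphism: a direct sum of semisimple representations is semisimple, and the trace of a block sum is the sum of the traces, so every generating character of $\bigoplus\rho_{tl}$ is a polynomial in the characters of the factors. Its image is exactly the set of characters of representations of the shape (\ref{eqn:rho}), that is $\tilde X_\pi$, so $\Phi$ surjects onto $\tilde X_\pi$. Since the direct sum is commutative up to isomorphism, $\Phi$ is invariant under the group $W=\prod_t S_{a_t}$ permuting the $a_t$ factors within each fixed dimension, and hence descends to a morphism $\bar\Phi\colon \prod_t \Sym^{a_t}\tilde\cX_{r_t}^*\to \tilde X_\pi$. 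Injectivity of $\bar\Phi$ is exactly where uniqueness of the decomposition enters: two collections with the same image yield isomorphic semisimple representations, hence equal multisets of summands, and since distinct dimensions cannot mix, the collections agree factor by factor.

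The main obstacle is promoting the bijection $\bar\Phi$ to an isomorphism of varieties, since a bijective morphism need not be invertible algebraically. I would resolve this by exhibiting both sides as the same GIT quotient. Let $S\subset R(\G,\GL(r,\CC))$ be the locus of block-diagonal representations $\prod_t\big(R^*(\G,\GL(r_t,\CC))\big)^{a_t}$, where $R^*$ denotes the irreducible locus, placed along the diagonal blocks, and let $N\subset\GL(r,\CC)$ be the subgroup preserving this decomposition up to permutations of equal-dimensional blocks, so that $N=\prod_t\big(\GL(r_t,\CC)\wr S_{a_t}\big)$. Every type-$\pi$ representation is semisimple, so its $\GL(r,\CC)$-orbit is closed, and Luna's slice theorem applied at a point of $S$ identifies the part of the GIT quotient parametrizing this stratum with $S // N$. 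Computing $S // N$ in two stages---first dividing by the Levi factor $\prod_t\GL(r_t,\CC)^{a_t}$, which gives $\prod_t(\tilde\cX_{r_t}^*)^{a_t}$ because $\tilde\cX_{r_t}^*=R^*(\G,\GL(r_t,\CC)) // \GL(r_t,\CC)$, and then dividing by the finite group $\prod_t S_{a_t}$, which gives $\prod_t\Sym^{a_t}\tilde\cX_{r_t}^*$---identifies the target of $\bar\Phi$ with $S // N\cong\tilde X_\pi$ and shows $\bar\Phi$ is the induced isomorphism.

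Finally I would verify that the identification holds over all of $\tilde X_\pi$, including the smaller loci where several of the $a_t$ summands of a common dimension coincide. These cause no extra trouble, since the symmetric product $\Sym^{a_t}\tilde\cX_{r_t}^*$ already incorporates diagonal points and the GIT quotient does not separate them; this uniformity is presumably why the statement is recorded without a detailed proof.
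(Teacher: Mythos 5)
Your argument is correct and follows the route the paper intends: Proposition \ref{prop:2} is recorded with no proof precisely because it is immediate from the identification of points of $M(\G,\GL(r,\CC))$ with semisimple representations (Section \ref{sec:character}, citing Lubotzky--Magid) together with the uniqueness of the decomposition into irreducible summands, which is exactly the content of your first two paragraphs. Your third paragraph, which upgrades the bijection $\bar\Phi$ to an isomorphism of varieties via the two-stage GIT quotient $S // N$ and Luna's theory, supplies the standard justification for a point the paper leaves implicit, and is a welcome addition rather than a deviation.
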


The representations corresponding to $\pi_1=(1 ,\stackrel{(r)}\ldots, 1)$ are called
\emph{totally reducible}, since they have matrices that diagonalize simultaneously in the same basis. The 
corresponding sets will be denoted $X_{TR}=X_{\pi_1}$, $\tilde X_{TR}=\tilde X_{\pi_1}$ and 
$\bar X_{TR}=\bar X_{\pi_1}$.

\begin{proposition} \label{prop:3}
We have $\tilde X_{TR}\cong \CC^{r-1}\x \CC^*$ and $X_{TR}\cong \CC^{r-1}$. 

In particular, $[\tilde X_{TR}]=\LL^{r-1}(\LL-1)$, $[X_{TR}]=\LL^{r-1}$ and $[\bar X_{TR}]=\LL^{r-1}$.
\end{proposition}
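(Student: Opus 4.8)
The plan is to identify $\tilde X_{TR}$ with a symmetric product and then read off everything from the elementary symmetric functions. First I would apply Proposition~\ref{prop:2} to the partition $\pi_1=(1,\stackrel{(r)}{\ldots},1)$, which has a single part $r_1=1$ with multiplicity $a_1=r$; this gives $\tilde X_{TR}=\Sym^r \tilde\cX_1^*$. Since every one-dimensional representation is trivially irreducible we have $\tilde\cX_1^*=\tilde\cX_1$, and Lemma~\ref{lemma:X1} identifies $\tilde\cX_1\cong\CC^*$ via $t\mapsto(t^m,t^n)$. Hence a totally reducible character is recorded by an unordered $r$-tuple $(t_1,\ldots,t_r)\in(\CC^*)^r$, where $\rho(x)$ has eigenvalues $t_i^m$ and $\rho(y)$ has eigenvalues $t_i^n$, so that $\tilde X_{TR}\cong\Sym^r(\CC^*)$.

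Next I would compute this symmetric product directly. An unordered $r$-tuple of nonzero complex numbers is the same datum as a monic degree-$r$ polynomial $\prod_i(z-t_i)$ with nonzero constant term, that is, as its coefficients $(e_1,\ldots,e_r)$, where $e_k$ is the $k$-th elementary symmetric function of the $t_i$, subject only to $e_r=\prod_i t_i\neq 0$. (Equivalently, $\CC[t_1^{\pm1},\ldots,t_r^{\pm1}]^{S_r}=\CC[e_1,\ldots,e_{r-1},e_r,e_r^{-1}]$.) Thus $\Sym^r(\CC^*)\cong\CC^{r-1}\times\CC^*$ with coordinates $(e_1,\ldots,e_{r-1})\in\CC^{r-1}$ and $e_r\in\CC^*$, which proves the first isomorphism. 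For $X_{TR}=\tilde X_{TR}\cap\cX_r$ I would impose the determinant condition: with the normalization $\det\rho=t$ of Section~\ref{sec:torus}, one computes $\det\rho(x)=\prod_i t_i^m=(\prod_i t_i)^m$, so $t=\prod_i t_i=e_r$, and $\cX_r$ is cut out by $e_r=1$. This value is automatically nonzero, so no locus is lost and $X_{TR}\cong\{e_r=1\}\cong\CC^{r-1}$ with free coordinates $(e_1,\ldots,e_{r-1})$.

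The classes of the first two spaces are then immediate from $[\CC]=\LL$ and $[\CC^*]=\LL-1$: namely $[\tilde X_{TR}]=\LL^{r-1}(\LL-1)$ and $[X_{TR}]=\LL^{r-1}$. For $\bar X_{TR}$ I would use the isomorphism $\bar\cX_r\cong\cX_r/\mu_r$ from (\ref{eqn:2}), so that $\bar X_{TR}=X_{TR}/\mu_r$, where $\gamma\in\mu_r$ acts by $\gamma\cdot(A,B)=(\gamma^m A,\gamma^n B)$, i.e. by $t_i\mapsto\gamma t_i$ on all eigenvalues simultaneously. On the coordinates this reads $e_k\mapsto\gamma^k e_k$, so $\mu_r$ acts on $X_{TR}\cong\CC^{r-1}$ linearly with weights $(1,2,\ldots,r-1)$. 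Proposition~\ref{prop:quotient} (the $\CC^k$-quotient consequence) then gives $[\bar X_{TR}]=[\CC^{r-1}/\mu_r]=\LL^{r-1}$.

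The two places to get right, and where the actual content sits, are the symmetric-product identification $\Sym^r(\CC^*)\cong\CC^{r-1}\times\CC^*$ and the weight bookkeeping $e_k\mapsto\gamma^k e_k$ that lets me invoke Proposition~\ref{prop:quotient}; everything else (the $e_r=1$ slice, the Lefschetz-class arithmetic) is routine. I do not expect a genuine obstacle, since the hypotheses of Proposition~\ref{prop:quotient} are met once the weight action is verified.
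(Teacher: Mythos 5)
Your proposal is correct and follows essentially the same route as the paper: the identification $\tilde X_{TR}=\Sym^r\tilde\cX_1^*\cong\Sym^r\CC^*\cong\CC^{r-1}\times\CC^*$ via elementary symmetric functions, the slice $\s_r=1$ (your $e_r=1$) for $X_{TR}$, and the $\mu_r$-action with weights $(1,2,\ldots,r-1)$ fed into Proposition~\ref{prop:quotient} for $\bar X_{TR}$ are exactly the paper's steps. Your determinant bookkeeping via $t=\prod_i t_i$ is equivalent to the paper's coprimality argument that $\prod t_j^m=\prod t_j^n=1$ forces $\prod t_j=1$.
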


\begin{proof}
By Lemma \ref{lemma:X1}, $\tilde\cX_1^* \cong \CC^*$ and it is formed by representations
$(t^m,t^n)$. By Proposition \ref{prop:2}, $\tilde X_{TR}=\Sym^r \tilde\cX_1^*$, where
$(A,B)$ is given by  $A=\diag (t_1^m,\ldots, t_r^m)$, $B=\diag (t_1^n,\ldots, t_r^n)$,  $t_j\in \CC^*$.
Then 
 $$
 \tilde X_{TR} \cong \Sym^r \CC^* \cong \CC^{r-1}\x \CC^*\, .
$$
Here the last isomorphism is given by $(t_1,\ldots, t_r)\mapsto (\s_1,\ldots,\s_r)$,
where $\s_k=\s_k(t_1, \ldots, t_r)$ is the $k$-th elementary symmetric function on
$t_1,\ldots, t_r$. Note that $t_i\neq 0, \forall i \iff \s_r\neq 0$.

The condition that $\det A=\det B=1$ means that $\prod t_j^m=\prod t_j^n=1$. So
$\prod t_j=1$, which is translated into $\s_r=1$. Hence $X_{TR}=\CC^{r-1}$.

For analysing the case of  $\bar X_{TR}$, we look at the action of $\mu_r$ on $X_{TR}$. Note that
$\epsilon\in \mu_r$ acts as $t_j\mapsto \epsilon t_j$, hence it acts as 
$(\s_1,\s_2,\ldots, \s_{r-1}) \mapsto  (\epsilon\s_1,\epsilon^2\s_2,\ldots, \epsilon^{r-1}\s_{r-1})$.
Proposition \ref{prop:quotient} gives now the result.
\end{proof}

\begin{lemma}\label{lem:red}
  Suppose that $\rho=(A,B)\in \tilde\cX_r^*$. Then $A,B$ are both diagonalizable
and $A^n=B^m=\varpi\, \Id$, for some $\varpi\in\CC^*$. Moreover,
neither $A$ nor $B$ is a multiple of the identity.

If $(A,B)\in\cX_r^*$ then $\varpi\in\mu_r$.
\end{lemma}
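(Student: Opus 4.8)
The plan is to exploit the defining relation $x^n=y^m$ together with irreducibility, via Schur's lemma applied to the central element $\rho(x^n)=\rho(y^m)=:C$. First I would observe that in the group $\G_{m,n}=\la x,y\mid x^n=y^m\ra$, the element $z:=x^n=y^m$ is central, since it commutes with both generators (it equals a power of $x$, hence commutes with $x$, and equals a power of $y$, hence commutes with $y$). Therefore $C=\rho(z)$ commutes with the whole image of $\rho$. For an irreducible representation $\rho=(A,B)\in\tilde\cX_r^*$, Schur's lemma forces any matrix commuting with the entire (irreducible) image to be a scalar, so $C=\varpi\,\Id$ for some $\varpi\in\CC$. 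Since $A$ is invertible (we are in $\GL(r,\CC)$), $A^n=C$ is invertible, whence $\varpi\neq 0$, i.e.\ $\varpi\in\CC^*$. This gives $A^n=B^m=\varpi\,\Id$ directly.

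Next I would deduce diagonalizability. Since $A^n=\varpi\,\Id$ with $\varpi\neq 0$, $A$ satisfies the polynomial $T^n-\varpi$, which has $n$ distinct roots in $\CC$ (as $\varpi\neq 0$); hence the minimal polynomial of $A$ divides a product of distinct linear factors and $A$ is diagonalizable. The identical argument using $B^m=\varpi\,\Id$ shows $B$ is diagonalizable.

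For the claim that neither $A$ nor $B$ is a scalar multiple of the identity, I would argue by contradiction using irreducibility. If $A=a\,\Id$ for some $a\in\CC^*$, then $A$ commutes with everything and every eigenspace of $B$ (equivalently, every coordinate subspace in a basis diagonalizing $B$) is invariant under both $A$ and $B$, hence under the whole representation. Since $r\geq 2$ (irreducibility in dimension $1$ is vacuous, but the proposition concerns $\cX_r^*$ with the reducible/irreducible dichotomy meaningful for $r\geq 2$), $B$ has a proper invariant subspace unless $B$ is also scalar; and if both $A$ and $B$ are scalar then every line is invariant, contradicting irreducibility. The symmetric argument handles the case $B=b\,\Id$.

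Finally, for the $\cX_r^*$ statement, I would take determinants. If $(A,B)\in\cX_r^*\subset\SL(r,\CC)$ then $\det A=\det B=1$, so from $A^n=\varpi\,\Id$ we get $\det(A^n)=\varpi^r$, i.e.\ $\varpi^r=(\det A)^n=1$, giving $\varpi\in\mu_r$. The main obstacle, such as it is, is nailing down precisely the low-rank and scalar edge cases in the nonscalar claim; the Schur's-lemma core is straightforward, so the only care needed is in the clean handling of the invariant-subspace argument to rule out $A$ or $B$ being central.
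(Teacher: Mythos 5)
Your proof is correct, and it takes a genuinely different route from the paper's. You first establish that $z=x^n=y^m$ is central in $\G_{m,n}$, apply Schur's lemma to $C=\rho(z)$ to get $A^n=B^m=\varpi\,\Id$ at once, and then extract diagonalizability as a corollary: $A$ satisfies $T^n-\varpi$, which is separable since $\varpi\neq 0$. The paper runs in the opposite order and never invokes Schur's lemma by name: it puts $A$ in Jordan form, observes that the span $W$ of the eigenvectors of the blocks is invariant under $A$ and (since $A^n=B^m$) under $B$, so irreducibility forces $W=\CC^r$ and all Jordan blocks to be trivial; it then shows $A^n$ is scalar by noting that the span of eigenvectors $v_i$ with a fixed value of $\lambda_i^n$ is again invariant under both generators. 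The paper's second step is in effect a hands-on proof of the Schur argument for the particular commuting operator $A^n$, so the two proofs share the same engine (an invariant subspace produced by the relation must be everything), but your packaging is shorter, more conceptual, and makes transparent why the result holds, while the paper's explicit Jordan computation is elementary and self-contained. Two further points in your favor: the determinant step ($\varpi^r=(\det A)^n=1$) matches the paper exactly, and you actually prove the assertion that neither $A$ nor $B$ is scalar (via an eigenline of the other matrix being invariant, with the harmless case split when both are scalar), a claim the paper states in the lemma but leaves unargued in its proof; your caveat that this claim implicitly requires $r\geq 2$ is also accurate, since $\tilde\cX_1^*$ consists entirely of scalar representations.
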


\begin{proof}
Choose a suitable basis so that $A$ is of Jordan form, with 
blocks $J_1,\ldots, J_k$. Let $J_i$ be a Jordan block of size $m_i\geq 1$ and
eigenvalue $\lambda_i$. Then $A^n$ has blocks $J_1^n,\ldots, J_k^n$,
and each $J_i^n$ is conjugated to a Jordan matrix of size $m_i$ with eigenvalue
$\lambda_i^n$. In particular, if $v_i$ is the eigenvector of $J_i$, then 
$v_i$ is the eigenvector of $J_i^n$. The span of eigenvectors of 
$A^n$ is $W=\la v_1,\ldots, v_k\ra$. This satisfies $A(W) \subset W$.
In an analogous fashion, as $A^n=B^m$, we have that $B(W)\subset W$.
By irreducibility, $W=\CC^r$, so all $m_i=1$. That is, both $A,B$ are diagonalizable.

Now let $\varpi=\lambda_1^n$. Let $W$ be the span of those $v_i$ such
that $\lambda_i^n=\varpi$. Then $A(W)\subset W$. In a similar fashion,
$B(W)\subset W$, so $W=V$. This means that $A^n=\varpi\, \Id$. Note that
all eigenvalues $\lambda_i$ of $A$ 
and all eigenvalues $\mu_j$ of $B$ satisfy $\lambda_i^n=\mu_j^m=\varpi$. 

The last assertion is clear since $\det A=\det B=1$ implies $\varpi^r=1$.
\end{proof}

\begin{corollary}\label{cor:5}
$\cX_r $, $\tilde\cX_r$ and $\bar\cX_r$ are connected.
\end{corollary}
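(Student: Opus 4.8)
The plan is to prove connectedness of $\cX_r$, $\tilde\cX_r$ and $\bar\cX_r$ by reducing everything to the connectedness of $\cX_r$, and then showing that every point of $\cX_r$ can be connected by a path (or chain of irreducible subvarieties) to a single fixed point, e.g.\ the totally reducible locus $X_{TR}$, which is itself connected by Proposition \ref{prop:3}. First I would dispose of $\tilde\cX_r$ and $\bar\cX_r$: by \eqref{eqn:1} we have $\tilde\cX_r \cong (\cX_r\x\CC^*)/\mu_r$, which is a quotient of a product of connected spaces (granting $\cX_r$ connected), hence connected; and by \eqref{eqn:2}, $\bar\cX_r\cong \cX_r/\mu_r$ is a quotient of a connected space, hence connected. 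So the whole statement collapses to showing $\cX_r$ is connected.

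For $\cX_r$, the strategy I would use is the degeneration argument already exhibited in the preamble (Section \ref{sec:character}): the semi-simplification process shows that any representation $\rho\in\cX_r$ is S-equivalent, via the explicit one-parameter family $P_t^{-1}\rho P_t$, to its semisimplification $\tilde\rho=\bigoplus\rho_i$. This already tells us that every stratum $X_\pi$ has the direct sum of irreducibles in its closure, so it suffices to connect the semisimple points among themselves, and in fact to connect every semisimple point to the totally reducible locus. The key step is therefore to deform each irreducible summand $\rho_i$ down to a sum of one-dimensional (totally reducible) pieces within $\cX_r$. By Lemma \ref{lem:red}, for an irreducible $(A,B)$ the matrices $A,B$ are simultaneously diagonalizable individually with $A^n=B^m=\varpi\,\Id$, $\varpi\in\mu_r$; the eigenvalues are $n$-th and $m$-th roots of $\varpi$ respectively, so they lie in finite (hence rigid) sets. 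This rigidity is precisely the obstruction: one cannot continuously vary the eigenvalues of an irreducible representation, so a naive path inside the irreducible stratum does not reach $X_{TR}$.

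The way around this is to leave the irreducible stratum and pass through its boundary. The natural plan is to connect an irreducible component to a reducible stratum by constructing, for a given irreducible $\rho_0\in\cX_r^*$, a path whose endpoint is a reducible representation lying in the closure $\overline{\cX_r^*}$. Concretely, I would exploit that the admissible eigenvalue data for $A$ and $B$ (roots of $\varpi$) is shared across all strata with the same central value $\varpi$: a totally reducible representation with the same collection of eigenvalues lies in the same fiber of the map recording $\varpi$, and one shows the irreducible locus with fixed eigenvalues is connected and has the corresponding diagonal representation in its closure. Thus each $X_\pi$ attaches to $X_{TR}$ along a chain of closures indexed by coarsenings of the partition, and since $X_{TR}\cong\CC^{r-1}$ is irreducible, the union is connected.

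The main obstacle I anticipate is the discreteness of $\varpi\in\mu_r$ and of the eigenvalues: a priori different choices of $\varpi$ (and of which $n$-th and $m$-th roots occur) give disjoint-looking pieces, so connectedness is not automatic and must be established by showing these pieces all meet $X_{TR}$ or share common boundary strata. I expect this to require a careful bookkeeping of which diagonal (totally reducible) representations lie in the closure of each irreducible stratum, i.e.\ an analysis of the boundary of $\cX_r^*$ inside $\cX_r$ via the semisimplification limit. Once it is shown that every stratum $X_\pi$ has a point of $X_{TR}$ in its closure, connectedness of the connected $X_{TR}$ with all the other (relatively closed) strata attached to it yields that $\cX_r$ is connected, completing the proof for all three character varieties.
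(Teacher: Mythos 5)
Your overall architecture coincides with the paper's: reduce $\tilde\cX_r$ and $\bar\cX_r$ to $\cX_r$ via (\ref{eqn:1}) and (\ref{eqn:2}), then attach every stratum to the connected locus $X_{TR}\cong\CC^{r-1}$ through closures. But the pivotal step of your plan --- that ``the irreducible locus with fixed eigenvalues is connected and has the corresponding diagonal representation in its closure'' --- is exactly where the mathematical content lies, and you assert it without supplying a mechanism. The paper proves it by a concrete parametrization: by Lemma \ref{lem:red}, a pair $(A,B)$ with fixed eigenvalue data $\kappa$ is determined by the configuration of its eigenspaces, i.e.\ a point of $\prod \Gr(v_i,r)\times\prod\Gr(w_j,r)$; the configurations yielding irreducible representations form a Zariski-open subset $\cU_\kappa$ of this irreducible variety, so $\cX_{r,\kappa}^*=\cU_\kappa/\PGL(r,\CC)$ is irreducible, hence connected; and the configuration of coordinate subspaces (spanned by standard basis vectors) lies in the closure of $\cU_\kappa$ and produces a simultaneously diagonal, hence totally reducible, pair. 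Without this (or an equivalent) parametrization, the ``careful bookkeeping'' you anticipate has nothing concrete to compute with, so as written your proof is incomplete precisely at its crucial point.

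Two further remarks. First, the semisimplification limit $P_t^{-1}\rho P_t\to\tilde\rho$ does no work here: points of $\cX_r$ are S-equivalence classes, so $\rho$ and $\tilde\rho$ are literally the same point of the GIT quotient, and the image of that family in $\cX_r$ is a constant path; nothing about closures of strata in $\cX_r$ follows from it. Second, for the intermediate strata $X_\pi$ your ``chain of closures indexed by coarsenings of the partition'' glosses over the determinant coupling: by Proposition \ref{prop:2} the summands in (\ref{eqn:rho}) are $\GL(r_t,\CC)$-representations constrained only by $\prod_{t,l}\det\rho_{tl}=1$, so $X_\pi$ is not a product of smaller $\SL$-character varieties. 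The paper handles this by mapping $W\times\prod_{t,l}Y_{tl}$ into $X_\pi$, where $W\subset(\CC^*)^{a_1+\cdots+a_s}$ is the determinant hypersurface, connected because $a_1+\cdots+a_s>1$; alternatively, degenerating the eigenspaces of each summand at fixed eigenvalues leaves the determinant unchanged and would also suffice --- but either way the point requires an argument that your sketch does not supply.
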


\begin{proof}
By (\ref{eqn:1}) and (\ref{eqn:2}), it is enough to see that $\cX_r$ is connected. 
We just need to see that the closure of any component $X_\pi$ intersects $X_{TR}$, which 
is connected by Proposition \ref{prop:3}. 

Let us focus first on $\cX_r^*$. For $(A,B)\in \cX_r^*$,
we diagonalize $A$ and $B$ (by Lemma \ref{lem:red}). This gives decompositions
 $\CC^r=V_1\oplus \ldots \oplus V_s$
and $\CC^r=W_1\oplus  \ldots  \oplus W_l$  into eigenspaces
given for $A,B$, respectively. Let  $v_i=\dim V_i$, $w_j=\dim W_j$,
where $r=\sum v_i=\sum w_j$. By Lemma \ref{lem:red},
$A^n=B^m=\varpi\, \Id$, and $\varpi^r=1$, 
the eigenvalues $\epsilon_1, \ldots, \epsilon_s$ and
$\varepsilon_1,\ldots,\varepsilon_l$ for $A$ and $B$, respectively, satisfy 
 $\epsilon_i^n=\varepsilon_j^m=\varpi$, $\varpi^r=1$.  So there are finitely many
choices for $v_i,w_j,\epsilon_i,\varepsilon_j$. We denote $\kappa=
((\epsilon_1,\stackrel{(v_1)}{\ldots}, \epsilon_1, \ldots,\epsilon_s,\stackrel{(v_s)}{\ldots}, \epsilon_s),
(\varepsilon_1,\stackrel{(w_1)}{\ldots}, \varepsilon_1, \ldots,\varepsilon_l,\stackrel{(w_l)}{\ldots}, \varepsilon_l))$,
repeating eigenvalues according to multiplicity.
This gives a collection of (disjoint) components 
 \begin{equation}\label{eqn:kappa}
 \cX_{r}^*= \bigsqcup_\kappa \cX_{r,\kappa}^*.
 \end{equation}

Fix a component $\cX_{r,\kappa}^*$. To determine the pair $(A,B)$ it is enough to give 
the eigenspaces $V_1,\ldots, V_s$ and $W_1,\ldots, W_l$. These
are given by a point in the product $\prod \Gr(v_i,r) \x \prod \Gr(w_j,r)$. 
The set of possible points determining an irreducible representation is an open (if non-empty)
subset $\cU_\kappa\subset \prod \Gr(v_i,r) \x \prod \Gr(w_j,r)$, and
 \begin{equation}\label{eqn:kappa2}
 \cX_{r,\kappa}^* = \cU_\kappa/\PGL(r,\CC).
 \end{equation}
 This space is irreducible and hence
connected. The choice of the subspaces  $V_i=
\la e_{v_1+\ldots+v_{i-1}+1}, \ldots, e_{v_1+\ldots+v_i}\ra$,
$W_j=\la e_{w_1+\ldots+w_{j-1}+1}, \ldots, e_{w_1+\ldots+w_j}\ra$,
in terms of the standard basis $\{e_1,\ldots,e_r\}$ gives a representation in
the closure of  $\cX_{r,\kappa}^*$, which is totally reducible. This completes the
argument in this case.

Now consider another stratum $X_\pi$, for 
$\pi=(r_1,\stackrel{(a_1)}{\ldots}, r_1, \ldots,r_s,\stackrel{(a_s)}{\ldots}, r_s)$. 
By Proposition \ref{prop:2}, $\tilde X_{\pi}= \prod_{t=1}^s \Sym^{a_t}  \tilde \cX_{r_t}^*$. 
Take an irreducible component $Y_{tl}$ of $\cX_{r_t}^*$, for each $t=1,\ldots, s$, $l=1,\ldots, a_t$.
Let $\tilde Y_{tl}$ be the image of $\CC^*\x Y_{tl}$ in $\tilde\cX_{r_t}^*$.
Consider the map $\prod_{t,l} (\CC^*\x Y_{tl}) \to \tilde X_{\pi}$. The condition of the determinant
being $1$ gives a hypersurface $W\subset (\CC^*)^{a_1+\ldots+a_s}$, which is connected
since $a_1+\ldots+a_s>1$. The image of $W\x  \prod_{t,l}  Y_{tl}$ in $X_{\pi}$
is connected, and all of them cover $X_{\pi}$. Now the closure of $Y_{tl}$ contains elements
in $X_{TR}^{r_t}$. Hence the closure of the image of $W\x  \prod_{t,l}  Y_{tl}$
contains elements that are totally reducible, as required.
\end{proof}

\section{Maximal dimensional components} \label{sec:max-dim}

We will now count the number of maximal dimensional irreducible components of $\cX_r$, and give
a geometric description of them.

\begin{theorem} \label{thm:max}
The character variety $\cX_r$ has dimension at most $(r-1)^2$. For $r\geq 3$, the number of
irreducible components of this dimension is 
 $$
 \frac{1}{r} \binom{n-1}{r-1} \binom{m-1}{r-1} . 
 $$
In particular, there are no such components if either $n<r$ or $m<r$.
\end{theorem}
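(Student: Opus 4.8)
The plan is to analyze the maximal dimensional components by working on the open stratum $\cX_r^*$ of irreducible representations and using the decomposition \eqref{eqn:kappa}, since the partially reducible strata $X_\pi$ with $\pi \neq (r)$ have strictly smaller dimension. By Lemma~\ref{lem:red}, for $(A,B) \in \cX_r^*$ both matrices are diagonalizable with $A^n = B^m = \varpi\,\Id$ and $\varpi^r = 1$, and the eigenvalues $\epsilon_i$ of $A$ and $\varepsilon_j$ of $B$ satisfy $\epsilon_i^n = \varepsilon_j^m = \varpi$. Thus each $\epsilon_i$ is one of the $n$ solutions to $z^n = \varpi$ and each $\varepsilon_j$ one of the $m$ solutions to $z^m = \varpi$; for fixed $\varpi$ there are $n$ possible $A$-eigenvalues and $m$ possible $B$-eigenvalues.

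The main computation is a dimension count for the component $\cX_{r,\kappa}^*$ given by \eqref{eqn:kappa2} as $\cU_\kappa / \PGL(r,\CC)$, where $\cU_\kappa$ is the open locus in $\prod_i \Gr(v_i,r) \times \prod_j \Gr(w_j,r)$ parametrizing the eigenspace configurations. First I would compute $\dim \prod_i \Gr(v_i,r) + \dim \prod_j \Gr(w_j,r) - \dim \PGL(r,\CC)$, using $\dim \Gr(k,r) = k(r-k)$ and $\dim \PGL(r,\CC) = r^2 - 1$, and then maximize over the admissible multiplicity data. A standard argument (by convexity / rearrangement) shows the dimension is maximized precisely when all multiplicities $v_i$ and $w_j$ equal $1$, i.e.\ when $A$ and $B$ each have $r$ distinct eigenvalues; this forces $s = l = r$ and requires $n \geq r$ and $m \geq r$ so that enough distinct roots exist, which explains the vanishing when $n < r$ or $m < r$. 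In the all-distinct case the count gives $\sum_i 1\cdot(r-1) + \sum_j 1\cdot(r-1) - (r^2-1) = 2r(r-1) - (r^2-1) = (r-1)^2$, establishing the dimension bound and identifying the top-dimensional $\kappa$.

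It remains to count how many choices of $\kappa$ yield a component of this maximal dimension $(r-1)^2$. With all multiplicities one, $\kappa$ amounts to choosing an unordered set of $r$ distinct $A$-eigenvalues among the $n$ roots of $z^n = \varpi$ and an unordered set of $r$ distinct $B$-eigenvalues among the $m$ roots of $z^m = \varpi$, for some fixed $\varpi \in \mu_r$. For a single fixed $\varpi$ this gives $\binom{n}{r}\binom{m}{r}$ choices, but I expect the correct normalization to come from the $\SL$ constraint $\prod_i \epsilon_i \prod_j \varepsilon_j = 1$ together with the residual $\mu_r$-symmetry coming from simultaneously scaling eigenvalues by $r$-th roots of unity, which reduces the naive count by a factor of $r$ and shifts $\binom{n}{r}$ to $\binom{n-1}{r-1}$ (and similarly for $m$), yielding $\frac{1}{r}\binom{n-1}{r-1}\binom{m-1}{r-1}$.

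The hard part will be making this last bookkeeping step precise: I must carefully track which eigenvalue data $\kappa$ actually produce \emph{nonempty} $\cU_\kappa$ (so that a genuine component exists) versus those where no irreducible representation with the prescribed eigenspace dimensions can occur, and I must correctly quotient by the $\mu_r$-action and impose the determinant-one condition without double-counting. The subtlety is that different eigenvalue sets $\kappa$ can give rise to components that are identified under the $\mu_r$-symmetry, so the factor $\frac1r$ must be justified by showing this symmetry acts freely on the relevant set of maximal-dimensional $\kappa$; verifying freeness and the resulting binomial identity $\frac1r \cdot r \binom{n-1}{r-1} = \binom{n-1}{r-1}$ versus the raw count $\binom{n}{r} = \frac{n}{r}\binom{n-1}{r-1}$ is where I expect the real care to be needed.
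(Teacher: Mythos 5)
Your dimension bound follows the paper's own argument in all essentials: strata $X_\pi$ with $\pi\neq(r)$ have strictly smaller dimension, and within $\cX_r^*$ the Grassmannian count $\sum v_i(r-v_i)+\sum w_j(r-w_j)-(r^2-1)$ is maximized exactly when all $v_i=w_j=1$, giving $(r-1)^2$ and the vanishing for $n<r$ or $m<r$. The genuine gap is in the component count, precisely the step you defer as ``the hard part'': the mechanism you propose for the factor $\frac1r$ is wrong. First, the determinant condition is $\prod_i\epsilon_i=1$ \emph{and} $\prod_j\varepsilon_j=1$ separately (since $\det A=\det B=1$), not $\prod_i\epsilon_i\prod_j\varepsilon_j=1$. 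Second, there is no residual $\mu_r$-identification to quotient by: distinct admissible data $\kappa$ give genuinely distinct components of the $\SL(r,\CC)$-variety $\cX_r$; the action $t\cdot(A,B)=(t^mA,t^nB)$, $t\in\mu_r$, permutes the components $\cX_{r,\tau}^*$ of $\cX_r$ without gluing them, and quotienting by it computes $\bar\cX_r$ via \eqref{eqn:2}, not $\cX_r$ (in Section~\ref{sec:max-dim} the coset space $F/\mu_r$ labels components of $\bar\cX_r$ and $\tilde\cX_r$ only). Third, the arithmetic of a uniform ``divide by $r$'' does not come out right: $r$ choices of $\varpi$ times $\binom{n}{r}\binom{m}{r}$, divided by $r$, gives $\binom{n}{r}\binom{m}{r}$, which exceeds the true count by a factor $nm/r^2$.

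What is actually needed is to see that the two determinant constraints cut the per-$\varpi$ count in a $\varpi$-dependent way, so no component-by-component symmetry argument (free or not) can work; only the total over all $\varpi\in\mu_r$ behaves uniformly. (Concretely, for $r=3$ and $3\mid n$, the number of admissible triples of $A$-eigenvalues is $n-3$ for $\varpi=1$ but $n$ for $\varpi\neq1$; this very phenomenon appears in the paper's proof of Proposition~\ref{prop:8.2}.) The paper resolves this with an averaging trick: let $N(k_1,k_2)$ count ordered distinct tuples with $\epsilon_i^n=e^{2\pi\imat k_1/r}$, $\varepsilon_j^m=e^{2\pi\imat k_2/r}$ and both products equal to $1$. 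Writing $\epsilon_i=e^{2\pi\imat k_1/nr}\eta_i$ with $\eta_i\in\mu_n$ distinct shows the factor $N_n(k_1)$ counts distinct tuples in $\mu_n$ with product $e^{-2\pi\imat k_1/n}$, whence $\sum_{k_1=0}^{n-1}N_n(k_1)=\frac{n!}{(n-r)!}$, and $N$ is both $r$-periodic (by definition) and $(n,m)$-periodic in $(k_1,k_2)$. Evaluating $\sum_{k=0}^{rnm-1}N(k,k)$ in two ways, using $\gcd(m,n)=1$, gives $T=\sum_{k=0}^{r-1}N(k,k)=\frac{r}{nm}\frac{n!}{(n-r)!}\frac{m!}{(m-r)!}$, and division by $(r!)^2$ (unordered eigenvalues) yields $\frac{r}{nm}\binom{n}{r}\binom{m}{r}=\frac1r\binom{n-1}{r-1}\binom{m-1}{r-1}$. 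So the ``$\frac1r$'' is the net effect of a factor $r$ (choices of $\varpi$) against $\frac1{nm}$ (the two determinant constraints on average), not a group quotient; your plan as stated would not close. One point you raise is fair and worth keeping: one must check that each maximal $\kappa$ gives a \emph{nonempty} $\cU_\kappa$ (a generic matrix $M$ as in \eqref{eqn:M} does this), a point the paper passes over quickly.
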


\begin{proof}
We work by induction $r$. The dimension holds for $r=1,2$. Take a 
stratum $X_\pi=  \prod_{t=1}^s \Sym^{a_t}  \tilde \cX_{r_t}^*$, then
 $$
 \dim X_\pi \leq \sum_{t=1}^s a_t(r_t-1)^2 < \left(\sum a_t r_t-1\right)^2=(r-1)^2,
 $$
unless $s=1$ and $a_1=1$, $r_1=r$, in which case there is equality. This corresponds to the component
$\cX_r^*$. 

As in the proof of Corollary \ref{cor:5}, the space $\cX_r^*$ consists of disjoint components 
$\cX_{r,\kappa}^*$, see (\ref{eqn:kappa}), and each component $\cX_{r,\kappa}^*$ is
of the form (\ref{eqn:kappa2}), where $\cU_\kappa\subset \prod \Gr(v_i,r) \x \prod \Gr(w_j,r)$.
Now 
 \begin{align*}
 \dim \, \cU_\kappa &= \sum v_i(r-v_i)+\sum w_j(r-w_j) \\ &< (\sum v_i)(r-1)+ (\sum w_j)(r-1)=2r(r-1),
 \end{align*}
unless all $v_i=w_j=1$, in which case there is equality. We have to quotient by $\PGL(r,\CC)$, which has dimension $r^2-1$,
hence $\dim  \cX_{r,\kappa}^* \leq 2r^2-r-(r^2-1)=r^2-2r-1=(r-1)^2$, with equality only if all $v_i=w_j=1$.

Now let us count the number of irreducible components of such dimension. It is the 
same as to count the number of $(\epsilon_1,\ldots, \epsilon_r)$, $(\varepsilon_1, \ldots,  \varepsilon_r)$,
subject to
 \begin{align*}
 &\epsilon_1\cdots \epsilon_r =1, \, \epsilon_i \text{ distinct},\\
 &\varepsilon_1\cdots \varepsilon_r =1,\, \varepsilon_j \text{ distinct}, \\
 & \epsilon_i^n =\varepsilon_j^m =\varpi,\\  & \varpi^r=1.
 \end{align*}
Denote by 
 $$
 N(k_1,k_2)=\# \{ 
(\epsilon_1,\ldots, \epsilon_r) \text{ distinct}, (\varepsilon_1, \ldots,  \varepsilon_r) \text{ distinct}  |
\epsilon_i^n =e^{2\pi\imat k_1/r}, \varepsilon_j^m =e^{2\pi\imat k_2/r} \},
 $$
for any pair of integers $k_1,k_2$. We have to compute the sum $T=\sum_{k=0}^{r-1} N(k,k)$.

As $m,n$ are coprime, we have that the modulo-reduction map $\ZZ_{nm} \to \ZZ_n\x \ZZ_m$ is a bijection.
Therefore
 \begin{equation*}
 \sum_{k=0}^{nm-1} N(k,k)= \sum_{k_1=0}^{n-1} \sum_{k_2=0}^{m-1} N(k_1,k_2).
 \end{equation*}
Now $N(k_1,k_2)=N_n(k_1)N_m(k_2)$, where
 \begin{align*}
 N_n(k_1) &=\# \{ (\epsilon_1,\ldots, \epsilon_r) \text{ distinct} \, |\, \epsilon_i^n =e^{2\pi\imat k_1/r} \}, \\
 N_m(k_2) &=\# \{ (\varepsilon_1, \ldots,  \varepsilon_r) \text{ distinct} \, |\, \varepsilon_j^m =e^{2\pi\imat k_2/r} \}.
 \end{align*}
Clearly $\sum_{k_1=0}^{n-1} N_n(k_1)= \frac{n!}{(n-r)!}$
and $\sum_{k_2=0}^{m-1} N_m(k_2)=\frac{m!}{(m-r)!}$. So
  \begin{equation}\label{eqn:sum}
 \sum_{k=0}^{nm-1} N(k,k)= \frac{n!}{(n-r)!} \, \frac{m!}{(m-r)!}.
 \end{equation}

Note that the sum (\ref{eqn:sum}) is the same if we start at any other integer, i.e.  $\sum_{k=a}^{nm-1+a} N(k,k)$ gives
the same value.  Therefore $\sum_{k=0}^{rnm-1} N(k,k)=r \sum_{k=0}^{nm-1} N(k,k)$. 
Also $N(k_1,k_2)= N(k_1+a r, k_2+b r)$, for any $a,b\in \ZZ$. Therefore
 $\sum_{k=0}^{rnm-1} N(k,k)=nm\sum_{k=0}^{r-1} N(k,k)=nmT$. 
Thus
 $T= \frac{r}{nm} \frac{n!}{(n-r)!} \frac{m!}{(m-r)!}$, and the number of irreducible components is
(taking into account the permutations of $\epsilon_i$ and of $\varepsilon_j$)
 $$
 \frac{1}{(r!)^2} T= \frac{r}{nm} \binom{n}{r}\binom{m}{r}= \frac{1}{r} \binom{n-1}{r-1} \binom{m-1}{r-1} .
 $$
\end{proof}

The proof of Theorem \ref{thm:max} gives us a way to label the maximal dimensional components of $\cX_r^*$.
Let 
$$
F'= \{ ((\epsilon_1,\ldots, \epsilon_r), (\varepsilon_1, \ldots,  \varepsilon_r)) 
\text{ distinct} | \prod\epsilon_i  =\prod \varepsilon_j =\omega, \omega^r=1 \}
$$
and define the set $F=F'/(\frS_r\x\frS_r)$, where the first $\frS_r$ acts by permutation on the components of
$(\epsilon_1,\ldots, \epsilon_r)$, and the second $\frS_r$ acts by permutation on the components of
$(\varepsilon_1,\ldots, \varepsilon_r)$. We shall denote a $\tau\in F$ as
$\tau=[(\epsilon_1,\ldots, \epsilon_r), (\varepsilon_1, \ldots,  \varepsilon_r)]$.

Now we describe geometrically the maximal dimensional components of $\cX_r$. For this, we introduce some notation. 
Consider $\GL(r,\CC)$. Let $T\cong (\CC^*)^r$ be the set of
diagonal matrices, and consider the action of $T\x T$, where the first $T$ acts on the left
and the second $T$ acts on the right on $\GL(r,\CC)$. The set
 $D=\{(\lambda \Id, \lambda^{-1}\Id)\}\subset T\x T$ acts trivially, so there is an
effective action of $T\x_D T$. 

\begin{proposition}\label{prop:Xtau}
 For $\tau\in F$, the $(r-1)^2$-dimensional component component $\cX_{r,\tau}^*$ is isomorphic to
 $$
\cX_{r,\tau}^* \cong \cM/(T\x_D T).
 $$
where $\cM \subset \GL(r,\CC)$ is the open subset of stable points for the $(T\x_D T)$-action.

In particular, all components for different $\tau\in F$, are isomorphic.
\end{proposition}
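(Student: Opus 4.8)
The plan is to build an explicit isomorphism by using the eigenbasis of the pair $(A,B)$ to produce a well-defined point of $\GL(r,\CC)$ modulo the two-sided torus action, and then to match irreducibility with GIT-stability for that action.

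First I would set up the map. Fix once and for all the representatives $D_A=\diag(\epsilon_1,\ldots,\epsilon_r)$ and $D_B=\diag(\varepsilon_1,\ldots,\varepsilon_r)$ of $\tau$, with all $\epsilon_i$ distinct and all $\varepsilon_j$ distinct. Given $(A,B)\in\cX_{r,\tau}^*$, Lemma \ref{lem:red} guarantees that $A$ and $B$ are diagonalizable; since the eigenvalues are prescribed and distinct, there is $P\in\GL(r,\CC)$ with $P^{-1}AP=D_A$, unique up to $P\mapsto PS_1$ with $S_1$ in the centralizer $Z(D_A)=T$. Replacing $(A,B)$ by $(D_A,P^{-1}BP)$ I may assume $A=D_A$; the residual freedom is conjugation by $g\in T$. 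Now $B$ has distinct eigenvalues $\varepsilon_j$, so there is an eigenbasis matrix $M\in\GL(r,\CC)$ with $B=M D_B M^{-1}$, unique up to $M\mapsto M S_2$ with $S_2\in T$ (rescaling of eigenvectors), while the residual conjugation sends $M\mapsto gM$. Thus $M$ is well defined modulo the action $(S_1,S_2)\cdot M=S_1 M S_2$ of $T\x T$, and since $D=\{(\l\Id,\l^{-1}\Id)\}$ acts trivially we obtain a class $[M]\in\GL(r,\CC)/(T\x_D T)$. This assignment is algebraic and visibly conjugation–invariant, giving a morphism $\cX_{r,\tau}^*\to\GL(r,\CC)/(T\x_D T)$, with candidate inverse $[M]\mapsto[(D_A,M D_B M^{-1})]$; the latter is a representation of $\G_{m,n}$ because the constraints defining $\tau$ give $D_A^n=\varpi\Id=D_B^m$, so $A^n=B^m$, and it carries exactly the eigenvalue data $\tau$. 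The two maps are mutually inverse once we cut both sides down to the correct loci, which I identify next.

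Second I would identify the image with the stable locus. The pair $(D_A,M D_B M^{-1})$ is reducible exactly when it has a proper nonzero common invariant subspace. Because $D_A$ has distinct eigenvalues, its invariant subspaces are the coordinate subspaces $E_I=\la e_i\mid i\in I\ra$; because $B$ has distinct eigenvalues with eigenvectors the columns of $M$, its invariant subspaces are the $M E_J$. Hence reducibility is equivalent to the existence of $I,J$ with $0<|I|=|J|<r$ and $M E_J=E_I$, i.e.\ to a vanishing block $M_{I^c,J}=0$. I would then match this with GIT (in)stability for $H=T\x_D T$ acting on the affine variety $\GL(r,\CC)$: the one-parameter subgroup scaling the rows in $I$ by $t$ and the columns in $J$ by $t^{-1}$ has zero determinant weight (as $|I|=|J|$), has nonnegative weight on every nonzero entry (the only negative-weight block is $M_{I^c,J}$, which vanishes), and drives $M$ to the block-diagonal limit obtained by also killing the $(I,J^c)$-block. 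This limit lies in $\GL(r,\CC)$ but off the orbit whenever $M_{I,J^c}\neq0$, while if $M_{I,J^c}=0$ too the matrix $M$ is genuinely block-diagonal and its stabiliser is positive dimensional; either way $M$ is not stable. Conversely, if no such block vanishes the bipartite support of $M$ is connected, forcing the stabiliser in $H$ to be finite, and any destabilising one-parameter subgroup (trivial determinant weight, nonnegative weights on the support) must expose precisely such a vanishing block. Therefore the image is exactly the stable locus $\cM$, and the morphism is an isomorphism onto $\cM/(T\x_D T)$, consistently with the presentation $\cX_{r,\tau}^*=\cU_\tau/\PGL(r,\CC)$ of \eqref{eqn:kappa2}.

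The main obstacle is this last equivalence between irreducibility and stability, and specifically the closed-orbit half: I expect the delicate point to be showing that every orbit-degenerating one-parameter subgroup necessarily exposes a proper vanishing block $M_{I^c,J}=0$. Ordering the row weights $a_i$ and column weights $b_j$ and examining the extreme weight levels does produce the block, but this bookkeeping needs care. Finally, since both the $H$-action on $\GL(r,\CC)$ and the vanishing-block condition defining $\cM$ are insensitive to the actual values $\epsilon_i,\varepsilon_j$ — they depend only on the distinctness that holds for every $\tau\in F$ — the target $\cM/(T\x_D T)$ is literally the same variety for all $\tau$, which immediately yields the final assertion that the components $\cX_{r,\tau}^*$ are pairwise isomorphic.
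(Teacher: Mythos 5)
Your proposal is correct and follows essentially the same route as the paper's proof: encode $(A,B)\in\cX_{r,\tau}^*$ by the eigenbasis-change matrix $M$ well defined up to the $T\x_D T$-action, translate irreducibility into the absence of a vanishing square block, and identify that with stability via the explicit one-parameter subgroup rescaling the rows in $I$ and the columns in $J$. The one place you go beyond the paper is the converse Hilbert--Mumford direction (that any destabilising one-parameter subgroup exposes a vanishing block), which the paper merely asserts; your weight-ordering sketch is sound, since invertibility together with zero total weight forces the column weights to equal $-a_{\sigma(j)}$ for some permutation $\sigma$, and the top weight level then produces the required block.
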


\begin{proof}
A maximal dimensional component $\cX_{r,\tau}^*$ is
determined by $\tau=[(\epsilon_1,\ldots, \epsilon_r), (\varepsilon_1, \ldots,  \varepsilon_r)]\in F$.
We fix a lift to $F'$, that is we fix an order of the eigenvalues throughout, say
$((\epsilon_1,\ldots, \epsilon_r), (\varepsilon_1, \ldots,  \varepsilon_r))$.

Given a pair $(A,B)\in \cX_{r,\tau}^*$, recall that $A,B$ are diagonalizable with the prescribed eigenvalues.
Let $v_1,\ldots,v_r$ be the eigenvectors of $A$ and $w_1,\ldots,w_r$ be the eigenvectors of $B$. These
are well-defined up to scalar multiples. We use $v_1,\ldots,v_r$ as a basis for $\CC^r$, and write
$w_j=(a_{1j},\ldots, a_{rj})$ in these coordinates. This produces a matrix
 \begin{equation}\label{eqn:M}
 M= \begin{pmatrix} a_{11} &\ldots & a_{1r} \\
 \vdots & \ddots & \vdots \\
a_{r1} & \ldots & a_{rr} \end{pmatrix}.
 \end{equation}
Note that $\det(M)\neq 0$, since the vectors $w_1,\ldots,w_r$ are linearly independent.
Let $\cM$ be the set of those $M$ which yield irreducible representations $(A,B)$. This
is equivalent to the fact that there do not exist sub-collections $v_{b_1},\ldots v_{b_p}$ and
$w_{a_1},\ldots, w_{a_p}$,
$0< p <r$, such that $W=\langle w_{a_1},\ldots, w_{a_p} \rangle=\langle v_{b_1},\ldots v_{b_p}\rangle$,
since in this case such $W$ would be invariant. This condition translates into the fact that the sub-minor corresponding
to $\{a_1,\ldots, a_p\} \times(\{1, \ldots, r\}-\{b_1,\ldots, b_p\})$ is identically zero. 
This is equivalent to the condition that $M$ is not a stable point for the action of $T\x_D T$, that is, the orbit
of the point has another orbit in the closure or the action is non-free. Clearly, if $(A,B)$ is as above, then acting by 
$(\diag(x_i),\diag(y_j))\in T\x_D T$, where $x_i=\lambda$ for $i\in \{a_1,\ldots, a_p\}$,
$x_i=\lambda^{-1}$ for $i\not\in \{a_1,\ldots, a_p\}$, $y_j=\lambda$ for $j\not\in \{b_1,\ldots, b_p\}$,
and $y_j=\lambda^{-1}$ for $j\in \{b_1,\ldots, b_p\}$, and then taking $\lambda\to 0$, we get points
in a different orbit, or the action is not free (the complementary minor tends to zero).

Let $\cM \subset \GL(r,\CC)$ be the open subset of stable points for the $(T\x_D T)$-action.
Then a representation $(A,B)$ is determined by a matrix $M\in \cM$ modulo the possible rescaling of
basis vectors $v_1,\ldots,v_r$ (this corresponds to the action of $T$ on the left)
and of the eigenvectors $w_1,\ldots,w_r$ (this corresponds to the action of $T$ on the right).
Note that the irreducible component $\cM/(T\x_D T)$ has dimension $r^2-(2r-1)=(r-1)^2$, as expected.
\end{proof}

\begin{remark} \label{rem:x}
The closure of the stratum $\cX_{r,\tau}^*$ is obtained by adding semisimple reducible representations (which are
direct sums of irreducible representations of smaller rank). This correspond to adding matrices $M$
as in (\ref{eqn:M}) for which there is a sub-minor corresponding
to $\{a_1,\ldots, a_p\} \times(\{1, \ldots, r\}-\{b_1,\ldots, b_p\})$ identically zero, and at the same
time the sub-minor $(\{1, \ldots, r\}-\{a_1,\ldots, a_p\} )\times\{b_1,\ldots, b_p\}$ also vanish.
We are therefore looking at a polystable point (direct sum of stable points)
of the action of $T\x_D T$. This means that
the closure of  $\cX_{r,\tau}^*$ is isomorphic to the GIT quotient
  $$
 \GL(r,\CC)//(T\x_D T).
 $$
\end{remark}

Now we shall explain how to get the maximal dimensional components of $\tilde\cX_r$ and $\bar\cX_r$
although we are not going to do the explicit count of them for general $r$ (we shall do it later for $r=2,3$).

Let $\tau=[(\epsilon_1,\ldots, \epsilon_r), (\varepsilon_1, \ldots,  \varepsilon_r)] \in F$. For
$t\in \mu_r$, we have  $t\cdot \tau=[(t\epsilon_1,\ldots, t\epsilon_r), (t\varepsilon_1, \ldots, t\varepsilon_r)]$,
which gives another (or the same) component. The map $t:\cX_r \to \cX_r$ maps $t:\cX_{r,\tau}^* \to \cX_{r,t\cdot\tau}^*$.

The maximal dimensional components of $\bar\cX_r$ are parametrized by the coset space $F/\mu_r$. So the
number of them is the cardinality of $F/\mu_r$. For given $[\tau]\in F/\mu_r$, the corresponding component is
 \begin{equation}\label{eqn:SS}
 \bar \cX_{r,[\tau]}^*= \left(\bigsqcup_{ t \in \mu_r} \cX_{r,t\cdot \tau}^* \right)/\mu_r\, .
 \end{equation}
Let $S=\Stab(\tau)\subset \mu_r$. If $t\in S$, then  $t: \cX_{r,\tau}^* \to \cX_{r,\tau}^*$, and
also there is an element $(f_t,g_t) \in \frS_r\x \frS_r$ such that 
$((t\epsilon_1,\ldots, t\epsilon_r), (t\varepsilon_1, \ldots, t\varepsilon_r))=
(f_t(\epsilon_1,\ldots, \epsilon_r), g_t(\varepsilon_1, \ldots, \varepsilon_r))$. Using
the natural action of $\frS_r\x \frS_r$ on $\cM$ (multiplication on the right and on the left by permutation matrices),
we have that (\ref{eqn:SS}) is isomorphic to 
 $$
 \bar \cX_{r,[\tau]}^*= \cX_{r, \tau}^* /S = \cM/ (H\x S) .
 $$

The maximal dimensional components of $\tilde\cX_r$ are also  parametrized by the coset space $F/\mu_r$. They 
are of dimension $(r-1)^2+1$, and are isomorphic to
 $$
  \tilde \cX_{r,[\tau]}^* =(\CC^*\x \left(\bigsqcup_{ t \in \mu_r} \cX_{r,t\cdot \tau}^* \right) ) /\mu_r \cong
 (\CC^*\x \cX_{r, \tau}^* ) / S =(\CC^*\x \cM ) /(H\x S) .
 $$

\section{Character varieties for $\SL(2,\CC),\GL(2,\CC)$ and $\PGL(2,\CC)$} \label{sec:rank2}

From \cite{Munoz}, we have the following:

\begin{proposition}\label{prop:7.1}
\label{prop:cX2} The variety $\cX_2$ consists of the following irreducible components:
\begin{itemize}
 \item One component $X_{TR}\cong \CC$.
 \item $(n-1)(m-1)/2$ components forming the irreducible locus. Each of them is isomorphic to
 $\CC-\{0,1\}$. The closure of each component is $\CC$, and intersects $X_{TR}$ in two points.
\end{itemize}
\end{proposition}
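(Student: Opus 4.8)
The plan is to specialize the general machinery of Sections~\ref{sec:r}--\ref{sec:max-dim} to $r=2$, where the partition bookkeeping is trivial: the only partitions of $2$ are $\pi_1=(1,1)$ and $\pi_0=(2)$, so $\cX_2=X_{TR}\sqcup\cX_2^*$ with no intermediate partially reducible strata. First I would dispose of the totally reducible part: Proposition~\ref{prop:3} with $r=2$ gives directly $X_{TR}\cong\CC$. Next I would count the irreducible components. By Lemma~\ref{lem:red} every $(A,B)\in\cX_2^*$ has $A,B$ diagonalizable with distinct eigenvalues, $A^n=B^m=\varpi\,\Id$ and $\varpi\in\mu_2$; the decomposition (\ref{eqn:kappa}) indexed by eigenvalue data is then exactly the set $F$, and Theorem~\ref{thm:max} evaluates the number of top-dimensional (here $(r-1)^2=1$-dimensional) components as $\tfrac12\binom{n-1}{1}\binom{m-1}{1}=\tfrac{(n-1)(m-1)}2$.

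Then I would identify each component geometrically via Proposition~\ref{prop:Xtau}, which gives $\cX_{2,\tau}^*\cong\cM/(T\x_D T)$ with $\cM\subset\GL(2,\CC)$ the stable locus. For $r=2$ the reducibility condition of that proposition (vanishing of a $1\times1$ sub-minor) says precisely that some entry $a_{ij}$ of the matrix $M$ in (\ref{eqn:M}) is zero, so $\cM$ consists of those $M\in\GL(2,\CC)$ all of whose entries are nonzero. The group $T\x_D T$ acts by $a_{ij}\mapsto x_i a_{ij} y_j$, under which the ratio $z=\frac{a_{11}a_{22}}{a_{12}a_{21}}$ is manifestly invariant. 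I would show this single invariant determines the quotient: the three effective parameters of $T\x_D T$ can be used to normalize $M$ to $\begin{pmatrix} z & 1\\ 1 & 1\end{pmatrix}$, and a direct computation of the remaining entry shows every orbit meets this normal form in a unique point, giving a bijection $\cM/(T\x_D T)\cong\{z\}$. The constraints $a_{ij}\neq0$ and $\det M\neq0$ translate into $z\neq0$ and $z\neq1$ respectively, so $\cX_{2,\tau}^*\cong\CC-\{0,1\}$.

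Finally I would analyze the closure. Letting $z\to0$ or $z\to1$ compactifies each component to $\CC$, and the two added points are the two reducible (hence, for $r=2$, totally reducible) limits obtained from the two ways of pairing the eigenvalues $(\epsilon_1,\epsilon_2)$ of $A$ with $(\varepsilon_1,\varepsilon_2)$ of $B$; these lie in $X_{TR}$, so each closure meets $X_{TR}$ in exactly two points. The main obstacle is the middle step: one must verify carefully that $z$ really separates all $T\x_D T$-orbits on the stable locus (equivalently, that the action on $\cM$ is free with the $z$-fibers as orbits) and that the normalization to the stated form is valid with all scaling factors nonzero. Once this is in place, the boundary bookkeeping—that exactly two eigenvalue pairings occur and that both land in the single component $X_{TR}$—is a short check.
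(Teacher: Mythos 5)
Your overall route is the paper's: $X_{TR}\cong\CC$ via Proposition~\ref{prop:3}, the count $(n-1)(m-1)/2$ via the counting argument of Theorem~\ref{thm:max} (which applies verbatim for $r=2$; note only that the theorem's statement restricts to $r\geq 3$ because for $r=2$ the stratum $X_{TR}$ also has dimension $(r-1)^2=1$), and the identification of each component as $\cM/(T\x_D T)$ with $\cM\subset\GL(2,\CC)$ the matrices with all entries nonzero. Your normalization to $\begin{pmatrix} z&1\\ 1&1\end{pmatrix}$ is exactly the paper's normalization $a=b=c=1$ with $z$ in the role of the free entry $d$, and the orbit-separation worry you flag is in fact immediate: a stabilizing element satisfies $x_iy_j=1$ for all $i,j$, which forces it into $D$ once all $a_{ij}\neq 0$.

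The genuine error is in the closure step. Your invariant $z=\frac{a_{11}a_{22}}{a_{12}a_{21}}$ does identify the component with $\CC-\{0,1\}$, but the two boundary points of the closure sit at $z=0$ and $z=\infty$, not at $z=0$ and $z=1$. Indeed $z=1$ is precisely the locus $\det M=0$: as $z\to 1$ the eigenvectors of $B$ become dependent, $M$ leaves $\GL(2,\CC)$, and no representation is obtained, so this end of the component escapes to infinity in the affine closure rather than being filled in. By Remark~\ref{rem:x} the added strictly semistable orbits are $b=c=0$, i.e.\ $z\to\infty$, giving $(\diag(\epsilon_1,\epsilon_2),\diag(\varepsilon_1,\varepsilon_2))$, and $a=d=0$, i.e.\ $z=0$, giving the swapped pairing $(\diag(\epsilon_1,\epsilon_2),\diag(\varepsilon_2,\varepsilon_1))$. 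To see that the closure is $\CC$ you should work with a \emph{regular} invariant on $\GL(2,\CC)$: the ring of $(T\x_D T)$-invariant regular functions is generated by $s=a_{11}a_{22}/\det M$ (with $a_{12}a_{21}/\det M=s-1$), so $\GL(2,\CC)//(T\x_D T)=\Spec\CC[s]\cong\CC$, the stable locus is $\{s\neq 0,1\}$, and the boundary points are $s=1$ (the case $b=c=0$) and $s=0$ (the case $a=d=0$); equivalently, use the paper's coordinate, the surviving entry $d$ after normalizing $a=b=c=1$. Since $(\CC-\{0,1\})\cup\{0,\infty\}\cong\CC$ as well, your final statement happens to be true, but your argument for it --- taking limits at $z=0,1$ and matching them with the two eigenvalue pairings --- fails at $z=1$ and misses the boundary point at $z=\infty$.
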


\begin{proof}
 Here the only possible representations are either totally reducible or irreducible. The totally reducible locus
is given by Proposition \ref{prop:3}. The irreducible representations must all be of type $\cX_{2,\tau}^*$ for
some $\tau=((\epsilon_1,\epsilon_2),(\varepsilon_1,\varepsilon_2))$. The number of them is given in
Theorem  \ref{thm:max}, and it is $(n-1)(m-1)/2$. Each of them is parametrized by 
$\cM/H$, where $\cM$ is formed by the matrices $\begin{pmatrix} a & b\\c &d \end{pmatrix}$ with
all entries non-zero, and $H=T \x_D T$ ($T$ are the diagonal matrices, and they act on the right and
on the left on $\cM$). Using the action of $H$, we can arrange $a=b=c=1$. Hence the matrix
is determined by $d\in \CC-\{0,1\}$. However, we shall use the parameter $r=\frac{1}{1-d}\in \CC-\{0,1\}$.

The closure of $\cM/H$ is given by $\GL(2,\CC)//H$. By Remark \ref{rem:x}, we have to 
add matrices with two entries which are zero. If either $b=c=0$, then we have
the reducible representation $(\diag (\epsilon_1,\epsilon_2), \diag(\varepsilon_1,\varepsilon_2))$.
This corresponds to $r=0$. 
If $a=d=0$, then we have
the reducible representation $(\diag (\epsilon_1,\epsilon_2), \diag(\varepsilon_2,\varepsilon_1))$.
This corresponds to $r=1$. 
\end{proof}

Note that $[\cX_2]=\LL + \frac12 (n-1)(m-1) (\LL-2)$.

\begin{proposition}\label{prop:7.2}
\label{prop:barcx2}
The variety $\bar\cX_2$ consists of the following irreducible components:
\begin{itemize}
 \item One component $\bar X_{TR}\cong \CC$.
 \item $[\frac{n-1}2][\frac{m-1}2]$ components of the irreducible locus, each of them isomorphic to
 $\CC-\{0,1\}$. The closure of each component is $\CC$, and interesects $\bar X_{TR}$ in two points.
 \item if $n$ is even and $m$ is odd, $(m-1)/2$ components of the irreducible locus, each of them isomorphic to
 $\CC^*$. The closure of each component is $\CC$, and intersects $\bar X_{TR}$ in one point.
(The case $m$ even and $n$ odd is analogous.)
\end{itemize}
\end{proposition}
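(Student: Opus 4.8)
The plan is to exploit the identification $\bar\cX_2\cong \cX_2/\mu_2$ from (\ref{eqn:2}) and push the description of $\cX_2$ in Proposition \ref{prop:cX2} through the quotient. The first step is to pin down the $\mu_2$-action: the generator $-1\in\mu_2$ acts on $\cX_2$ by $(A,B)\mapsto ((-1)^m A,(-1)^n B)$, so for $n$ even and $m$ odd this is simply $(A,B)\mapsto(-A,B)$. The totally reducible stratum needs no new work, since Proposition \ref{prop:3} already gives $\bar X_{TR}\cong\CC$; all the content is in the irreducible locus.

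Next I would analyze the action on $\cX_2^*=\bigsqcup_\tau \cX_{2,\tau}^*$. Because $-1$ sends $A$ to $-A$ and fixes $B$, it sends the component labelled by $\tau=[(\epsilon_1,\epsilon_2),(\varepsilon_1,\varepsilon_2)]$ to the one labelled by $[(-\epsilon_1,-\epsilon_2),(\varepsilon_1,\varepsilon_2)]$. Using $\epsilon_1\epsilon_2=1$, this fixes $\tau$ as an element of $F$ precisely when $\{-\epsilon_1,-\epsilon_2\}=\{\epsilon_1,\epsilon_2\}$, i.e. $\epsilon_1^2=-1$, so $\{\epsilon_1,\epsilon_2\}=\{\imat,-\imat\}$; this is compatible with $\epsilon_1^n=\varpi\in\{\pm1\}$ exactly because $n$ is even. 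Counting the admissible $\{\varepsilon_1,\varepsilon_2\}$ (distinct $m$-th roots of $\varpi=(-1)^{n/2}$ with product $1$) yields $(m-1)/2$ fixed components, while the remaining $\frac{(n-1)(m-1)}2-\frac{m-1}2=\frac{(n-2)(m-1)}2$ components split into $[\frac{n-1}2][\frac{m-1}2]$ free orbits of size two.

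Finally I would compute the two types of quotient. A free orbit identifies $\cX_{2,\tau}^*$ with $\cX_{2,-\tau}^*$ isomorphically, hence contributes one component $\cong\CC-\{0,1\}$; its closure adds the two totally reducible points $r=0,1$ of Proposition \ref{prop:cX2}, meeting $\bar X_{TR}$ in two points. For a fixed component I would trace the generator through the normal form $M=\begin{pmatrix} 1 & 1\\ 1 & d\end{pmatrix}$ of Proposition \ref{prop:cX2}: interchanging $\imat,-\imat$ swaps the two eigenvectors $v_1,v_2$ of $A$, hence the two rows of $M$, and renormalizing by $T\x_D T$ sends $d\mapsto 1/d$, that is $r=\frac1{1-d}\mapsto 1-r$. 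The quotient of $\CC-\{0,1\}$ by the involution $r\mapsto 1-r$ is $\CC^*$ via the invariant $s=r(1-r)$, since the two roots of $r^2-r+s=0$ lie in $\CC-\{0,1\}$ exactly when $s\neq0$; and because $r=0$ and $r=1$ are swapped and both map to $s=0$, the closure is $\CC$ meeting $\bar X_{TR}$ in the single point $s=0$.

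The main obstacle is this last step: correctly identifying the induced involution on the parameter of each fixed stratum and checking that the quotient is $\CC^*$ rather than $\CC-\{0,1\}$ or $\CC$. This demands careful bookkeeping of the $T\x_D T$-normalization, together with matching the ramification point $r=\tfrac12$ and the boundary identification $r=0\sim r=1$ against the predicted one-point intersection with $\bar X_{TR}$.
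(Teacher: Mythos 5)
Your proposal is correct and follows essentially the same route as the paper: both use $\bar\cX_2\cong\cX_2/\mu_2$, identify the fixed components via $\{\epsilon_1,\epsilon_2\}=\{\imat,-\imat\}$ (possible only for $n$ even, yielding $(m-1)/2$ of them), and show the induced involution on a fixed component is $r\mapsto 1-r$ through the row swap and $T\x_D T$-renormalization of the normal form, giving quotient $(\CC-\{0,1\})/\mu_2\cong\CC^*$. Your explicit invariant $s=r(1-r)$ and the boundary identification $r=0\sim r=1$ merely spell out details the paper leaves implicit, so this is the same proof.
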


\begin{proof}
We use the description $\bar\cX_2\cong \cX_2/\mu_2$, where $\mu_2=\{\pm 1\}$.
 For the reducible component, we have $\bar X_{TR} =\CC/\mu_2\cong \CC$.

The irreducible components of $\cX_2$ are parametrized by the finite set $F$ of eigenvalues 
of $A$ and $B$. If $F'=\{(\epsilon_1,\epsilon_2),(\varepsilon_1,\varepsilon_2)$, 
$\epsilon_1\epsilon_2 =\varepsilon_1\varepsilon_2=1$, $\epsilon_i^n=\varepsilon_j^m= \pm 1\}$, then
$\mu_2$ acts on $F'$ by $((\epsilon_1,\epsilon_2),(\varepsilon_1,\varepsilon_2))\mapsto 
((-1) ^m\epsilon_1,(-1) ^m\epsilon_2),((-1) ^n\varepsilon_1,(-1)^n\varepsilon_2))$, and $F=F'/(\frS_2\x \frS_2)$. When both $m$ and $n$
are odd there are no fixed points, because fixed points occur when $\epsilon_2=-\epsilon_1$ and
$\varepsilon_2=-\varepsilon_1$, equivalently when $\epsilon_1^2=\varepsilon_1^2=-1$,  which contradicts 
$\epsilon_1^n=\varepsilon_1^m= \pm 1$. In this case $\# (F/\mu_2)=\# F/2 =\frac14 (n-1)(m-1) = \frac{n-1}2 \, \frac{m-1}2$.

Assume now that $n$ is even, then the same calculation shows that fixed points 
occur precisely when 
$\{\epsilon_1,\epsilon_2\}=\{\pm\sqrt{-1}\}$, for any admissible value of 
$\varepsilon_j$. This yields 
$(m-1)/2$ components. On those invariant components, the action of $\mu_2$ permutes 
two rows or two columns of the matrix
 $\begin{pmatrix} a & b\\c &d \end{pmatrix}$ in the proof of Proposition \ref{prop:7.1}. This 
amounts to map the parameter $d$ to $1/d$,
 equivalently, to map the parameter $r$ to $1-r$. So
the component is isomorphic to $(\CC-\{0,1\})/\mu_2 \cong \CC^*$.
\end{proof}

By Proposition \ref{prop:7.2}, we have that for $m,n$ odd,
$[\bar\cX_2]=\LL + \frac14 (n-1)(m-1) (\LL-2)$. For $n$ even and $m$ odd, we have
$[\bar\cX_2]=\LL + \frac14 (n-2)(m-1) (\LL-2)+\frac12 (m-1) (\LL-1)$. 

\begin{proposition} \label{prop:7.3}
The variety $\tilde\cX_2$ consists of the following 
irreducible components:
\begin{itemize}
 \item One component $\tilde X_{TR}\cong \CC\x\CC^*$.
 \item $[\frac{n-1}2][\frac{m-1}2]$ components of the irreducible locus, each of 
them is isomorphic to
 $(\CC-\{0,1\})\times \CC^*$. The closure of each component is $\CC\times \CC^*$, 
and intersects $\tilde X_{TR}$ in two $\CC^*$'s.
 \item If $n$ is even, there are $(m-1)/2$ extra components of the irreducible locus, each 
isomorphic to $\{(u,v)\in \CC^2 | v\neq 0, v\neq u^2\}$. The closure of each component is $\CC \times\CC^*$, and 
intersects $\tilde X_{TR}$ along a $\CC^*$.
(The case $m$ even and $n$ odd is analogous.)
\end{itemize}
\end{proposition}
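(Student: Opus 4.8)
The plan is to exploit the isomorphism \eqref{eqn:1}, namely $\tilde\cX_2 \cong (\cX_2 \x \CC^*)/\mu_2$, and transport the component structure of $\cX_2$ from Proposition \ref{prop:7.1} through this quotient. The $\mu_2=\{\pm1\}$ action is $t\cdot((A,B),w)=((t^2 A, t^2 B), t^2 w)$ with $t=-1$, i.e.\ it acts on the $\CC^*$ factor by $w\mapsto w$ combined with the involution $t\cdot\tau$ on eigenvalue data that already appeared in the proof of Proposition \ref{prop:7.2}. First I would treat the reducible piece: since $\tilde X_{TR}\cong \CC\x\CC^*$ was already computed directly in Proposition \ref{prop:3} (with $r=2$), this component comes for free and requires no further argument.

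Next I would handle the irreducible locus. Each irreducible component of $\cX_2$ is a copy of $\CC-\{0,1\}$ indexed by $\tau\in F$, and its preimage in $\cX_2\x\CC^*$ is $(\CC-\{0,1\})\x\CC^*$. The key dichotomy is exactly the fixed-point analysis from Proposition \ref{prop:7.2}: when $\mu_2$ acts freely on the eigenvalue labels (which happens for the $[\tfrac{n-1}2][\tfrac{m-1}2]$ generic components), the involution swaps two components $\cX_{2,\tau}^*$ and $\cX_{2,-\tau}^*$, so the quotient of $(\CC-\{0,1\})\x\CC^*$ by the free $\mu_2$-action is again a single copy of $(\CC-\{0,1\})\x\CC^*$ (one representative per orbit), giving the first listed family. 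When $n$ is even and $\tau$ is a fixed point (the $(m-1)/2$ invariant components), the involution acts nontrivially on a \emph{single} component: on the $\CC-\{0,1\}$ factor it sends the parameter $r\mapsto 1-r$ (equivalently $d\mapsto 1/d$), as computed in Proposition \ref{prop:7.2}, and it acts on the $\CC^*$ factor as well. I would then compute the quotient $((\CC-\{0,1\})\x\CC^*)/\mu_2$ explicitly: choosing invariant coordinates for the involution $(r,w)\mapsto (1-r, \pm w)$ (the sign on $w$ coming from tracking the $\CC^*$-action through \eqref{eqn:1}), one obtains invariants such as $u$ built from $w$ and $v$ built from the symmetric combination $r(1-r)$ together with $w^2$, and the resulting image is the affine variety $\{(u,v)\in\CC^2 \mid v\neq 0,\ v\neq u^2\}$.

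The closure statements I would read off from Remark \ref{rem:x} and Proposition \ref{prop:7.1}: the closure of each $\cX_{2,\tau}^*$ in $\cX_2$ is $\CC$ and meets $X_{TR}$ in two points ($r=0,1$), so after multiplying by $\CC^*$ and passing to the quotient the closure becomes $\CC\x\CC^*$. For the generic family this closure meets $\tilde X_{TR}$ in two copies of $\CC^*$ (the images of $\{r=0\}\x\CC^*$ and $\{r=1\}\x\CC^*$); for the $n$-even exceptional family the involution identifies these two boundary strata (since $r\mapsto 1-r$ swaps $r=0$ and $r=1$), so the closure meets $\tilde X_{TR}$ along a single $\CC^*$, exactly as stated.

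The main obstacle will be the explicit identification of the quotient $((\CC-\{0,1\})\x\CC^*)/\mu_2$ with $\{(u,v) \mid v\neq 0,\ v\neq u^2\}$ in the exceptional case. This requires correctly tracking how the $\mu_2$-action on the $\CC^*$-determinant factor interacts with the $r\mapsto 1-r$ involution on the irreducible parameter, and then finding the right generating invariants so that the defining inequalities $v\neq 0$ and $v\neq u^2$ come out precisely; the excluded locus $v=u^2$ should correspond to the image of the deleted points $r\in\{0,1\}$ and $v=0$ to the deleted $w=0$, and verifying that no further identifications or singularities are introduced by the quotient is the delicate point.
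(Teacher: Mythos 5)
Your proposal takes essentially the same route as the paper's proof: the isomorphism $\tilde\cX_2\cong(\cX_2\times\CC^*)/\mu_2$ of \eqref{eqn:1}, the swapped-versus-fixed dichotomy on components inherited from the proof of Proposition \ref{prop:7.2}, Proposition \ref{prop:3} for $\tilde X_{TR}$, and the closure analysis via Proposition \ref{prop:7.1}. There is, however, one local error you should fix, precisely at the point you yourself flag as delicate. Your initial description of the action, $t\cdot((A,B),w)=((t^2A,t^2B),t^2w)$, is the identity map, and the ensuing claim that $\mu_2$ acts on the $\CC^*$ factor by $w\mapsto w$ is wrong: the generator $-1$ acts by $((A,B),\lambda)\mapsto(((-1)^mA,(-1)^nB),-\lambda)$, so on an invariant component (with $n$ even, $m$ odd) it is $(r,\lambda)\mapsto(1-r,-\lambda)$, nontrivial on \emph{both} factors. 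This sign is essential: with trivial action on the $\CC^*$ factor the quotient of a fixed component would be $(\CC^*)\times\CC^*$, which is not isomorphic to $\{(u,v)\mid v\neq0,\ v\neq u^2\}$ (the latter has nonabelian fundamental group, as it fibers over $\CC^*$ with fiber a twice-punctured $\CC$ and puncture-swapping monodromy). Your later hedge $(r,w)\mapsto(1-r,\pm w)$ leaves this open, and your suggested invariants ($u$ built from $w$ alone, $v$ from $r(1-r)$ and $w^2$) cannot work, since $w$ itself is not invariant. The paper closes exactly this gap with the explicit invariants $u=(2r-1)\lambda$, $v=\lambda^2$, which identify $(\CC\times\CC^*)/\mu_2$ with $\{v\neq0\}$ (note $r(1-r)=(v-u^2)/(4v)$ is then recovered, so no further identifications occur), and the deleted fibers $r\in\{0,1\}$ land on the curve $v=u^2$, $u\neq 0$ --- matching your prediction for the excluded loci and your observation that the involution glues the two boundary $\CC^*$'s of the closure into a single one. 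With that correction, your argument agrees with the paper's proof in all cases, including the generic components, where the sign is immaterial because $\mu_2$ freely swaps two components and the quotient is a single copy of $(\CC-\{0,1\})\times\CC^*$.
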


\begin{proof}
The component $\tilde X_{TR}$ is given in  Proposition~\ref{prop:3}. 
Now we use the description $\tilde\cX_2\cong ( \cX_2\times\CC^*)/\mu_2$. By the 
proof of Proposition~\ref{prop:barcx2},
when $m$ and $n$ are odd $\mu_2$ switches components of $\cX_2$, without 
preserving any of them, and the proposition follows in this case.
When $n$ is even, $\mu_2$ preserves  $(m-1)/2$ components of $\cX_2$, and for 
each such component it
maps the parameters $(r,\lambda)\in (\CC-\{0,1\})\times \CC^*$ 
to $(1-r, -\lambda)$. The quotient is $((\CC-\{0,1\})\times \CC^*)/\mu_2$. The compactification
is given as $(\CC\times \CC^*)/\mu_2$, and can be coordinatized with $u=(2r-1)\lambda$, $v=\lambda^2$.
The quotient is given by $\{(u,v)\in \CC^2 |v\neq 0\}$, and the image of the curves $(\{0,1\} \times \CC^*)/\mu_2\cong
\CC^*$ is given by $v=u^2, u\neq 0$. Hence the component is isomorphic to
$\{(u,v)\in \CC^2 | v\neq 0, v\neq u^2\}$.
\end{proof}

As $\bar\cX_2=(\tilde\cX_2)/\CC^*$, we have $[\tilde\cX_2]=(\LL-1)[\bar\cX_2]$.
So for $m,n$ odd,
$[\tilde\cX_2]=(\LL + \frac14 (n-1)(m-1) (\LL-2))(\LL-1)$. For $n$ even and $m$ odd, we have
$[\tilde\cX_2]=(\LL + \frac14 (n-2)(m-1) (\LL-2)+\frac12 (m-1) (\LL-1))(\LL-1)$.

\section{Character varieties for $\SL(3,\CC)$} \label{sec:SL3}

Now we move to the description of the $\SL(3,\CC)$-character variety $\cX_3$. 

\begin{proposition}\label{prop:8.1}
The components of reducible representations of $\cX_3$ are the following:
 \begin{itemize}
 \item The component of totally reducible representations $X_{TR}=\CC^2$.
 \item $[\frac{n-1}2][\frac{m-1}2]$ components $X_{PR}^{1,i}$ of  partially reducible representations, each isomorphic to
 $(\CC-\{0,1\})\times \CC^*$. 
 \item If $n$ is even, there are $(m-1)/2$ extra components  $X_{PR}^{2,i}$ of partially reducible representations, each 
isomorphic to $\{(u,v)\in \CC^2 | v\neq 0, v\neq u^2\}$. 
(The case $m$ even and $n$ odd is analogous.)
\end{itemize}
\end{proposition}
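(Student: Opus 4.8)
The plan is to run through the partitions of $3$ that give reducible representations and handle each stratum with the structural results already in hand. The partitions of $3$ are $(3)$, $(2,1)$ and $(1,1,1)$. The first, $\pi_0=(3)$, gives the irreducible stratum $\cX_3^*$, so the reducible locus of $\cX_3$ is the disjoint union of the totally reducible stratum $X_{TR}=X_{(1,1,1)}$ and the partially reducible stratum $X_{(2,1)}$. The totally reducible stratum is immediate: by definition $X_{(1,1,1)}=X_{TR}$, and Proposition \ref{prop:3} with $r=3$ gives $X_{TR}\cong\CC^{2}$.

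The substance of the proof is the partially reducible stratum, whose points are representations $\rho=\rho_1\oplus\rho_2$ with $\rho_1\in\tilde\cX_2^*$ and $\rho_2\in\tilde\cX_1^*$. By Proposition \ref{prop:2} we have $\tilde X_{(2,1)}=\tilde\cX_2^*\x\tilde\cX_1^*$, and $X_{(2,1)}$ is cut out of this by the determinant condition $\det\rho=\mathbf{1}$. I would record the determinant bookkeeping using the parametrization of Lemma \ref{lemma:X1}: writing $\det\rho_1=t_1$ and $\rho_2=t_2$ as elements of $\tilde\cX_1\cong\CC^*$, one gets $\det(\rho_1\oplus\rho_2)=t_1 t_2$, so $\det\rho=\mathbf{1}$ reads $t_2=t_1^{-1}$. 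For each fixed $\rho_1\in\tilde\cX_2^*$ this determines $\rho_2$ uniquely, so projection to the first factor gives an isomorphism $X_{(2,1)}\cong\tilde\cX_2^*$. The conceptual point, and the place a careless argument would slip, is that the two-dimensional block is genuinely a $\GL(2,\CC)$-representation whose determinant is absorbed by the one-dimensional summand; this is precisely why the $\GL$-variety $\tilde\cX_2^*$, and not the $\SL$-variety $\cX_2^*$, governs the partially reducible locus.

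Finally I would read off the components from Proposition \ref{prop:7.3}. Removing the totally reducible stratum $\tilde X_{TR}$ from $\tilde\cX_2$ leaves exactly the irreducible locus $\tilde\cX_2^*$, whose components are the $[\frac{n-1}2][\frac{m-1}2]$ copies of $(\CC-\{0,1\})\x\CC^*$, together with, when $n$ is even, the $(m-1)/2$ copies of $\{(u,v)\in\CC^2\mid v\neq 0,\ v\neq u^2\}$. Under the isomorphism $X_{(2,1)}\cong\tilde\cX_2^*$ these become the components $X_{PR}^{1,i}$ and $X_{PR}^{2,i}$ of the statement. Every step beyond the determinant computation is a direct appeal to the cited propositions, so the only real obstacle is getting that computation right and thereby correctly identifying the $\GL(2,\CC)$ (rather than $\SL(2,\CC)$) character variety as the relevant object.
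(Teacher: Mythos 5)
Your proposal is correct and follows essentially the same route as the paper: the paper likewise reduces the partially reducible stratum to the identification $X_{(2,1)}\cong\tilde\cX_2^*$ (the one-dimensional summand being forced to equal $(\det\rho|_W)^{-1}$, exactly your determinant bookkeeping) and then quotes Proposition \ref{prop:7.3}, with $X_{TR}\cong\CC^2$ coming from Proposition \ref{prop:3}. Your added emphasis that the relevant object is the $\GL(2,\CC)$-variety $\tilde\cX_2^*$ rather than $\cX_2^*$ is precisely the point the paper's shorter proof also hinges on.
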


\begin{proof}
The description of $X_{TR}$ is in Proposition \ref{prop:3}. 
Now we move to partially reducible representations. This corresponds to representations in $\SL(3,\CC)$ 
which split as $\CC^3=W\oplus W'$, where
$\dim W=2$, $\dim W'=1$. That is, the partition is $\pi=\{(2,1)\}$. Then $X_\pi\cong \tilde \cX_2^*$, 
since a representation $\rho\in X_\pi$ is determined by $\rho|_W$, because $\rho|_{W'}$ is
fully determined as $(\det\rho|_{W})^{-1}$. The description of the components now
follows from Proposition \ref{prop:7.3}.
\end{proof}

\begin{proposition}\label{prop:8.2}
 Then set $\cX_3^*$ of irreducible representations is composed by the following components $\cX_{3,\pi}^*$:
 \begin{itemize}
\item  $\frac1{12}(n-1)(n-2)(m-1)(m-2)$ componens of maximal dimension $4$, which are isomorphic to
$\cM/(T\x_D T)$, where $\cM\subset \GL(3,\CC)$ are the stable points for the $(T\x_D T)$-action.
\item $\frac12 (n-1)(m-1)(n+m-4)$ components $\cX_{3,\kappa}^*$, each isomorphic to $(\CC^*)^2-\{x+y=1\}$.
\end{itemize}
\end{proposition}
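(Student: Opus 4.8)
The plan is to analyze the irreducible representations $\rho=(A,B)\in\cX_3^*$ via the stratification $\cX_3^*=\bigsqcup_\kappa \cX_{3,\kappa}^*$ introduced in the proof of Corollary \ref{cor:5}, where $\kappa$ records the eigenvalue data together with the dimensions $v_i,w_j$ of the eigenspaces of $A$ and $B$. By Lemma \ref{lem:red}, both $A$ and $B$ are diagonalizable with $A^n=B^m=\varpi\,\Id$, and in $\cX_3^*$ we have $\varpi\in\mu_3$; neither matrix is scalar. Since $\dim\CC^3=3$, the partition of $3$ into eigenspace dimensions for each of $A$ and $B$ is either $(1,1,1)$ or $(2,1)$ (the partition $(3)$ is excluded as it forces a scalar matrix). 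I would split the count of components according to these two possibilities for the pair of eigenspace-dimension types.

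First I would treat the case where both $A$ and $B$ have three distinct eigenvalues, i.e. all $v_i=w_j=1$. These are exactly the maximal-dimensional components: by the argument in Theorem \ref{thm:max} (with $r=3$) the number of such components is $\frac{1}{r}\binom{n-1}{r-1}\binom{m-1}{r-1}=\frac{1}{3}\binom{n-1}{2}\binom{m-1}{2}=\frac{1}{12}(n-1)(n-2)(m-1)(m-2)$, and by Proposition \ref{prop:Xtau} each is isomorphic to $\cM/(T\x_D T)$ with $\cM\subset\GL(3,\CC)$ the stable locus. This disposes of the first bullet essentially by quoting earlier results.

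Next I would handle the mixed case, where one of $A,B$ has eigenspace type $(2,1)$ and the other is either $(2,1)$ or $(1,1,1)$. The key observation is that if $A$ has a two-dimensional eigenspace $V$ (say $A=\diag(\epsilon_1,\epsilon_1,\epsilon_2)$ up to order), then to keep $\rho$ irreducible $B$ must not preserve $V$ or its complement; one analyzes the relative position of the eigenspaces of $B$ against the single two-dimensional eigenspace of $A$. I expect the geometric model here to reduce, after quotienting by $\PGL(3,\CC)$ as in \eqref{eqn:kappa2}, to a configuration space of points/lines in $\PP^2$ whose open stratum is $(\CC^*)^2-\{x+y=1\}$: the two $\CC^*$ factors come from the two independent cross-ratio-type coordinates recording how the line and the remaining eigendata of $B$ sit relative to the eigenspaces of $A$, and the removed hyperplane $x+y=1$ is the degeneracy locus where irreducibility fails (an invariant subspace appears). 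I would verify that $\cU_\kappa/\PGL(3,\CC)\cong(\CC^*)^2-\{x+y=1\}$ by exhibiting explicit coordinates on the eigenvectors modulo scaling, exactly as in the rank-$2$ computation of Proposition \ref{prop:7.1} but with one extra dimension.

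Finally, I would count the components of this second type. This amounts to counting the admissible eigenvalue data $\kappa$: choosing which of $A,B$ carries the repeated eigenvalue, and then choosing the distinct eigenvalues subject to the constraints $\prod\epsilon_i=\prod\varepsilon_j=1$, $\epsilon_i^n=\varepsilon_j^m=\varpi$, $\varpi^3=1$. A counting argument in the spirit of Theorem \ref{thm:max}, summing over the three values of $\varpi\in\mu_3$ and using that $\ZZ_{nm}\cong\ZZ_n\x\ZZ_m$, should produce the claimed total $\frac12(n-1)(m-1)(n+m-4)$; the factor $(n+m-4)$ reflects the two sub-cases (repeated eigenvalue on the $A$-side contributing an $(m-2)$-type count, on the $B$-side an $(n-2)$-type count) combined with the distinctness constraints. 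The main obstacle I anticipate is the second step: correctly setting up the $\PGL(3,\CC)$-quotient of the eigenspace configuration and pinning down that the open irreducible stratum is precisely $(\CC^*)^2-\{x+y=1\}$ rather than some other surface, since this requires a careful choice of normalizing coordinates and a precise identification of the locus where a common invariant subspace forces reducibility. The combinatorial count, while delicate in bookkeeping the two orientations of the $(2,1)$ type, should follow the template already established for the maximal-dimensional case.
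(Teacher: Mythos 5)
Your overall route coincides with the paper's: the first bullet is quoted from Theorem \ref{thm:max} and Proposition \ref{prop:Xtau}; the remaining components are classified by the eigenvalue multiplicities of $A$ and $B$; each is described via a configuration in $\PP^2$ (the paper normalizes the eigenvectors $w_1,w_2,w_3$ of $B$ to the coordinate points and the eigenplane $L$ of $A$ to the line with dual coordinates $[1,1,1]$, leaving as free parameter the point $[v]\in\PP^2$ minus four lines in general position, which is exactly $(\CC^*)^2-\{x+y=1\}$ --- matching your cross-ratio picture, up to the harmless point that in these coordinates one of the four removed lines is $L$ itself rather than a reducibility locus); and the count is carried out in the spirit of Theorem \ref{thm:max}. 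However, there is one concrete gap: you explicitly allow the case in which \emph{both} $A$ and $B$ have eigenvalue type $(2,1)$ (``the other is either $(2,1)$ or $(1,1,1)$'') and never dispose of it, while your final tally $\frac12(n-1)(m-1)(n+m-4)$ only has room for the two genuinely mixed sub-cases. This case must be shown to be empty, and the paper does so in one line: if $A$ and $B$ each have a two-dimensional eigenspace, these two planes in $\CC^3$ intersect in at least a line, on which both $A$ and $B$ act as scalars; that line is a common invariant subspace, contradicting irreducibility. Your proposed test (``$B$ must not preserve $V$ or its complement'') does not detect this forced intersection, so without the extra argument your enumeration of admissible $\kappa$, and hence the count, is unjustified as it stands.

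A secondary, smaller issue is the counting step itself. The paper does not re-run the full $\ZZ_{nm}\cong\ZZ_n\x\ZZ_m$ averaging of Theorem \ref{thm:max} here; instead it counts directly, and this requires a case distinction you do not anticipate: writing $\epsilon_3=\epsilon_1^{-2}$ and $\varpi=\epsilon_1^n$ with $\varpi^3=1$, the number of admissible $\epsilon_1$ per value of $\varpi$ is $n-1$ for each $\varpi$ when $3\nmid n$, but $n-3$ for $\varpi=1$ and $n$ otherwise when $3\mid n$; one must check both cases yield the same total $(n-1)(m-1)(m-2)/2$ for the $A$-repeated side (and symmetrically $(m-1)(n-1)(n-2)/2$ for the $B$-repeated side). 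Your template is plausible and would likely succeed, but as written it is a plausibility claim rather than a verification, and the mod-$3$ bookkeeping is precisely where a naive count goes wrong.
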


\begin{proof} 
The number of irreducible components of maximal dimension 
is given by  Theorem~\ref{thm:max}, and its geometric description by Proposition \ref{prop:Xtau}. 

Now we look at the remaining components. According to (\ref{eqn:kappa}), 
these are of the form $\cX_{r,\kappa}^*$, where 
$\kappa=((\epsilon_1,\epsilon_2,\epsilon_3),(\varepsilon_1,\varepsilon_2,\varepsilon_3))$ 
satisfies that some eigenvalues are repeated. From Lemma \ref{lem:red}, neither $A$ nor $B$
are a multiple of the identity for any irreducible representation $(A,B)$. Therefore, the three 
eigenvalues cannot be the same.
Also it cannot be that $\epsilon_1= \epsilon_2$ and 
$\varepsilon_1=\varepsilon_2$, since in this case the intersection of the 
two-dimensional eigenspace of $A$ with the two-dimensional eigenspace of $B$ 
would give an invariant non-trivial subspace of $\rho$. The only possibilities are:
\begin{enumerate}
 \item $\epsilon_1=\epsilon_2 \neq \epsilon_3$ and  $\varepsilon_1,\varepsilon_2,\varepsilon_3$ are distinct.
 \item $\epsilon_1,\epsilon_2,\epsilon_3$ are distinct and $\varepsilon_1=\varepsilon_2\neq \varepsilon_3$.
\end{enumerate} 

Assume that Case (1) holds. Then $\epsilon_3=\epsilon_ 1^{-2}$. As 
$\epsilon_1\neq\epsilon_3$, $\epsilon_1^3\neq 1$. On the other hand, since 
$\varpi =\epsilon^n_1$, $\epsilon^{3n}_1=1$, so there are $3 n -3 $ choices for 
$\epsilon_1$. 

Suppose $n\not\equiv 0 \pmod 3$. For each value of $\varpi$ there are $n-1$ 
choices for $\epsilon_1$. So there are a total of $(n-1)3 (m^2-3m + 
2)/6=(n-1)(m-1)(m-2)/2$ possibilities. Suppose $n \equiv 0 \pmod 3$. Then for 
$\varpi=1$ there are $n-3$ choices for $\epsilon_1$ and for $\varpi\neq 1$ there are $n$ choices. 
Note that in this case $m\not\equiv 0\pmod 3$. The total is $(3n-3) (m^2-3m + 
2)/6=(n-1)(m-1)(m-2)/2$ possibilities, again.

Now fix one strata, i.e., the eigenvalues of $A$ and $B$. Let $L,v$ be the plane 
and vector which give eigenspaces of $A$, and let $w_1,w_2,w_3$ be the 
eigenvectors of $B$. Fix coordinates so that $w_1=(1,0,0), w_2=(0,1,0), 
w_3=(0,0,1)$. The plane $L$ gives a line in $\PP^2$. It does not contain either 
of the points $[w_1],[w_2],[w_3] \in \PP^2$. Therefore the line is given by 
$a_1x_1+a_2x_2+a_3x_3=0$, that is, its coordinates in the dual space $( 
\PP^2)^*$, $[a_1,a_2,a_3]$ do not have any entry which is a zero. So we can 
rescale the coordinates to arrange $[a_1,a_2,a_3]=[1,1,1]$. Therefore we only 
have the choice of $[v]\in \PP^2$, with $[v]\notin L$. If $[v] \in R= \la 
[w_1],[w_2]\ra$ then take the intersection of $L$ with $R$, say $L\cap R=\la 
[u]\ra$. Then $\la u,v\ra$ is an invariant subspace, and so $\rho$ is reducible. 
So $[v]$ is not in any line $ \la [w_i],[w_j]\ra$. The parameter space is thus 
$\PP^2$ minus four lines: $(\CC^*)^2-\{x+y=1\}$.

Case (2) is analogous to Case (1), with $(m-1)(n-1)(n-2)/2$ strata. Note that 
$(m-1)(n-1)(n-2)/2+(n-1)(m-1)(m-2)/2=(n-1)(m-1)(n+m-4)/2$.
\end{proof}

We shall denote by $G$ the index set of those $\kappa$ parametrizing the components 
$\cX_{3,\kappa}^*$ of dimension $2$ in Proposition \ref{prop:8.2}.

\begin{theorem}  \label{thm:K-theory}
 The K-theory class of the character variety $\cX_3$ is as follows. For $m,n$ both odd,
 \begin{align*}
 [\cX_3] =& \frac1{12}(n-1)(n-2)(m-1)(m-2)( \LL^4+4\LL^3-9\LL^2-3\LL+12) \\ 
 &+ \LL^2+ \frac14 (n-1)(m-1) (\LL^2-3\LL+2) \\ &+ \frac12 (n-1)(m-1)(n+m-4) (\LL^2-3\LL+3) 
 \end{align*}
For $n$ even and $m$ odd, it is
 \begin{align*}
 [\cX_3]= &
\frac1{12}(n-1)(n-2)(m-1)(m-2)( \LL^4+4\LL^3-9\LL^2-3\LL+12)  \\ 
&+ \LL^2+\frac14 (n-2)(m-1) (\LL^2-3\LL+2) \\ &+\frac12 (m-1) (\LL^2-2\LL+1) + \frac12 (n-1)(m-1)(n+m-4) (\LL^2-3\LL+3)
 \end{align*}
\end{theorem}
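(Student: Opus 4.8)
The plan is to assemble $[\cX_3]$ as the sum of the K-theory classes of each stratum appearing in Propositions \ref{prop:8.1} and \ref{prop:8.2}, using the additivity of $[\cdot]$ over the disjoint decomposition $\cX_3=X_{TR}\sqcup\bigsqcup X_{PR}\sqcup \cX_3^*$. The strata fall into four types, and for three of them we already have explicit descriptions: $[X_{TR}]=\LL^2$ by Proposition \ref{prop:3}; each of the $[\frac{n-1}2][\frac{m-1}2]$ partially reducible components $X_{PR}^{1,i}\cong(\CC-\{0,1\})\times\CC^*$ contributes $(\LL-2)(\LL-1)=\LL^2-3\LL+2$; and each of the $\frac12(n-1)(m-1)(n+m-4)$ two-dimensional irreducible components isomorphic to $(\CC^*)^2-\{x+y=1\}$ contributes $[(\CC^*)^2]-[\{x+y=1\}\cap(\CC^*)^2]$. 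The line $\{x+y=1\}$ meets $(\CC^*)^2$ in $\CC$ minus the two points $x=0,x=1$, i.e.\ $\CC-\{0,1\}$, so the class is $(\LL-1)^2-(\LL-2)=\LL^2-3\LL+3$. Summing these three contributions reproduces the second, fourth, and fifth displayed terms.

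The remaining ingredient is the class of a single maximal-dimensional component $\cX_{3,\tau}^*\cong\cM/(T\x_D T)$, which must equal $\LL^4+4\LL^3-3\LL^2-15\LL+12$; multiplying by the count $\frac1{12}(n-1)(n-2)(m-1)(m-2)$ from Proposition \ref{prop:8.2} gives the leading term. To compute it I would work directly with the description in Proposition \ref{prop:Xtau}: $\cM$ is the open locus of $M\in\GL(3,\CC)$ that are stable for $T\x_D T$, and the action is free on $\cM$, so $[\cM/(T\x_D T)]=[\cM]/[T\x_D T]$ provided the quotient map is a Zariski-locally-trivial $(T\x_D T)$-bundle. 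Here $T\x_D T\cong(\CC^*)^5$ (it is $(\CC^*)^3\x(\CC^*)^3$ modulo the one-dimensional $D\cong\CC^*$), so $[T\x_D T]=(\LL-1)^5$. The plan is then to stratify $\GL(3,\CC)$ by the vanishing pattern of the $3\times3$ matrix entries, use Remark \ref{rem:x} to identify exactly which strata are removed to obtain the stable locus $\cM$, compute $[\GL(3,\CC)]=(\LL^3-1)(\LL^3-\LL)(\LL^3-\LL^2)$, subtract the classes of the non-stable loci (those $M$ possessing a vanishing sub-minor of the prohibited shape), and finally divide by $(\LL-1)^5$.

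The main obstacle will be the inclusion–exclusion over the non-stable strata. The stability condition from Proposition \ref{prop:Xtau} fails precisely when some complementary rectangular sub-minor (corresponding to a subset $\{a_1,\ldots,a_p\}$ of rows and the complement of $\{b_1,\ldots,b_p\}$ of columns, $0<p<3$) vanishes identically; for $r=3$ these are conditions indexed by the pairs $(p,\text{subsets})$ with $p=1$ and $p=2$, and their loci overlap in complicated ways. I would organize this by first computing the class of the toric variety $\GL(3,\CC)//(T\x_D T)$ appearing as the closure in Remark \ref{rem:x} — this closure is a projective GIT quotient and may be easier to handle as a small toric variety — and then subtracting the boundary strata (the polystable-but-not-stable points, which are direct sums corresponding to the reducible limits). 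Concretely, the boundary consists of lower-rank sums, and by the stratification philosophy of Corollary \ref{cor:5} these limits are indexed by the ways an irreducible $\SL(3)$-character can degenerate to $(2,1)$- and $(1,1,1)$-type reducibles; tracking their classes and correctly accounting for multiplicities in the quotient is the delicate bookkeeping step.

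Once the class $\LL^4+4\LL^3-3\LL^2-15\LL+12$ of the single top component is established, the two cases ($m,n$ both odd versus $n$ even, $m$ odd) differ only in the partially reducible contribution, exactly mirroring the case split in Propositions \ref{prop:7.2} and \ref{prop:7.3}. For $n$ even one replaces $[\frac{n-1}2]=\frac{n-1}2$ by $\frac{n-2}2$ in the $X_{PR}^{1,i}$ count and adds the $(m-1)/2$ extra components $X_{PR}^{2,i}\cong\{(u,v)\mid v\neq0,\,v\neq u^2\}$; the latter has class $[\CC\x\CC^*]-[\{v=u^2,v\neq0\}]=\LL(\LL-1)-(\LL-1)=(\LL-1)^2=\LL^2-2\LL+1$, which is precisely the extra term $\frac12(m-1)(\LL^2-2\LL+1)$ in the second displayed formula. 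Collecting all contributions in each case yields the two stated expressions, so the only genuinely new computation is the class of the single four-dimensional component, and everything else is assembly by additivity.
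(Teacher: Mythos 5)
Your assembly of $[\cX_3]$ coincides with the paper's proof in every structural respect: the same additive decomposition over the strata of Propositions \ref{prop:8.1} and \ref{prop:8.2}, the same classes $\LL^2$, $(\LL-2)(\LL-1)=\LL^2-3\LL+2$, $(\LL-1)^2=\LL^2-2\LL+1$ and $(\LL-1)^2-(\LL-2)=\LL^2-3\LL+3$ for the totally reducible, partially reducible (types 1 and 2) and two-dimensional irreducible strata, the same case split for $n$ even, and the same reduction of the only nontrivial term to $[\cM/(T\x_D T)]=[\cM]/(\LL-1)^5$ (your caveat about Zariski-local triviality is easily discharged: the action is free on the stable locus and tori are special groups in Serre's sense, so the principal bundle is Zariski-locally trivial). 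Where you genuinely diverge is in the computation of $[\cM]$ itself. The paper does not do inclusion--exclusion on the unstable locus: it stratifies $\cM$ directly by the zero pattern of the first column (all three entries nonzero, or exactly one zero, the latter counted with multiplicity $3$) and counts the admissible second and third columns within each stratum --- two cases, each a short count. You propose instead to start from $[\GL(3,\CC)]=(\LL^3-1)(\LL^3-\LL)(\LL^3-\LL^2)$ and subtract the unstable loci, possibly organized through the toric closure $\GL(3,\CC)//(T\x_D T)$ of Remark \ref{rem:x}. Your complement-side plan can be made to work, but note two concrete hazards in the part you defer as ``bookkeeping''. First, for $r=3$ instability means that some column of $M$ has two zero entries (the $p=1$ degenerations, an invariant line $\langle w_j\rangle=\langle v_i\rangle$) \emph{or} some row has two zero entries (the $p=2$ degenerations, an invariant plane); these are $18$ overlapping loci, and both types must be removed in every stratum of any decomposition --- this row/column symmetry is exactly where slips occur, so carrying out your symmetric inclusion--exclusion to the end is a valuable cross-check against the paper's asymmetric two-stratum count rather than something to be waved through by asserting the target polynomial in advance. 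Second, your closure-first variant has a trap the paper's route avoids: on the strictly polystable boundary of $\GL(3,\CC)//(T\x_D T)$ the stabilizers are positive-dimensional, so the identity $[\text{quotient}]=[\text{upstairs}]/(\LL-1)^5$ on which your whole computation rests fails there; the boundary classes must be computed in the quotient itself, with the multiplicities you mention handled by hand. Working directly with the stable locus $\cM$, where the action is everywhere free, sidesteps this entirely, which is why the paper's organization is the lighter of the two.
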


\begin{proof}
We have to add the contributions from Propositions \ref{prop:8.1} and \ref{prop:8.2}. Here
$[X_{TR}]=\LL^2$, $[\cX_{3,\kappa}^*]=(\LL-1)^2-(\LL-2)=\LL^2-3\LL+3$ for $\kappa\in G$, 
$[X_{PR}^{1,i}]= (\LL-2)(\LL-1)=\LL^2-3\LL+2$ and 
$[X_{PR}^{2,i}]= (\LL-1)^2=\LL^2-2\LL+1$. It only 
remains to compute the class $[\cM/(T\x_D T)]$.

Consider the space $\cM$. We stratify it as follows:
\begin{itemize}
\item If $a_{11}, a_{21},a_{31}$ are non-zero. Then the first vector accounts for $(\LL-1)^3$. The second and third vectors
should be independent, accounting for $(\LL^3-\LL)(\LL^3-\LL^2)$. 
We have to remove the case where one of the vectors is a coordinate vector. This accounts
for $6(\LL-1)(\LL^3-\LL^2)- 6(\LL-1)^2$, the substracting term due to the overcount produced when the two vectors are
simultaneously coordinate vectors. We also need to remove when both vectors lie in a coordinate plane (and they
are not coordinate vectors, which has been already taken into account). This produces $3(\LL-1)(\LL-1)(\LL-1)(\LL-2)$.
This gives a total of 
 \begin{align*}
 (\LL-1)^3\Big((\LL^3-\LL)(\LL^3-\LL^2) &-3(\LL-1)^3(\LL-2) \\ 
 &-6(\LL-1)(\LL^3-\LL^2) + 6(\LL-1)^2 \Big).
 \end{align*}
\item Suppose one of $a_{11},a_{21},a_{31}$ is zero (say $a_{31}=0$, we have to multiply by three at the end). Then
the first vector accounts for $(\LL-1)^2$. We have a matrix 
 $$
 M=\begin{pmatrix} a_{11} & b & c \\
a_{21} & d & e \\
 0 & a_{32} & a_{33} \end{pmatrix}
 $$
It must be that $a_{32}, a_{33}$ are non-zero, so accounting for $(\LL-1)^2$. The condition for $\det(M)=0$ is linear on 
$b,c,d,e$, and all the coefficients of the linear equation are non-zero. So the choices 
for them yield a $\LL^4-\LL^3$. We have to substract for accounting the cases where 
any column or row of $\begin{pmatrix}  b & c \\
 d & e  \end{pmatrix}$  is zero, that is $4(\LL^2-\LL)-4(\LL-1)$. So the total is
 $$
 3(\LL-1)^4 \left( \LL^4-\LL^3- 4(\LL^2-\LL) + 4(\LL-1)\right).
$$
\end{itemize}
Therefore 
 \begin{align*}
 [\cM/T\x_D T] &= \frac{[\cM]}{(\LL-1)^5} = \frac{1}{(\LL-1)^2} \Big((\LL^3-\LL)(\LL^3-\LL^2)  
- 3(\LL-1)^3(\LL-2) \\ & \quad  -6(\LL-1)(\LL^3-\LL^2) + 6(\LL-1)^2 \Big)
 + \frac{3(\LL^4-\LL^3- 4(\LL^2-\LL) + 4(\LL-1) )}{\LL-1} \\ 
 &= 
  \LL^4+4\LL^3-9\LL^2-3\LL+12. 
 \end{align*} 
\end{proof}

\begin{corollary} \label{cor:n,m}
The character variety $\cX_3$ determines $n,m$ up to order.
\end{corollary}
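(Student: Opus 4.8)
The plan is to read off from the isomorphism type of $\cX_3$ two numerical invariants that are symmetric in $m$ and $n$, and then solve for the unordered pair $\{m,n\}$. Let $P$ be the number of irreducible components of $\cX_3$ of maximal dimension $4$; by Proposition~\ref{prop:8.2} these are exactly the components $\cX_{3,\tau}^*$, so $P=\frac1{12}(n-1)(n-2)(m-1)(m-2)$, and since the number of top-dimensional irreducible components is an isomorphism invariant, $P$ is intrinsic to $\cX_3$. Let $Q$ be the number of $2$-dimensional irreducible components isomorphic to $(\CC^*)^2-\{x+y=1\}$; by Proposition~\ref{prop:8.2} this equals $Q=\frac12(n-1)(m-1)(n+m-4)$. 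To make $Q$ well-defined from the abstract variety I would note that the four families of $2$-dimensional components appearing in Propositions~\ref{prop:8.1}--\ref{prop:8.2}, namely $\CC^2$, $(\CC-\{0,1\})\times\CC^*$, $\{(u,v):v\neq0,\,v\neq u^2\}$ and $(\CC^*)^2-\{x+y=1\}$, are pairwise non-isomorphic, since their K-theory classes $\LL^2$, $\LL^2-3\LL+2$, $\LL^2-2\LL+1$ and $\LL^2-3\LL+3$ from Theorem~\ref{thm:K-theory} are pairwise distinct (equivalently their Euler characteristics differ); hence one can count each type separately and in particular recover $Q$.

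Next I would solve the system. Writing $a=m-1$, $b=n-1$ and letting $s=a+b$, $p=ab$, a direct expansion gives
\begin{align*}
12P &= p^2-ps+p, & 2Q &= ps-2p.
\end{align*}
Adding these cancels the mixed term and yields $p^2-p=12P+2Q$. For a nontrivial torus knot, $m,n$ are coprime with both at least $2$, so $p\geq 2$ and $12P+2Q>0$; since $t\mapsto t^2-t$ is strictly increasing for $t\geq1$, this equation has a unique positive root, and so $p$ is determined. Then $s=2+2Q/p$ is determined (using $p\neq0$), and $\{a,b\}$ is recovered as the two roots of $t^2-st+p=0$. Finally $\{m,n\}=\{a+1,b+1\}$ is recovered, necessarily only up to order since $P$ and $Q$ are symmetric in $m$ and $n$.

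The one genuine difficulty is confirming that $P$ and $Q$ are invariants of $\cX_3$ as an abstract variety, not artifacts of the chosen stratification. For $P$ this is automatic. For $Q$ it rests on the pairwise non-isomorphism of the four families of $2$-dimensional components; this is exactly where I would invoke the K-theory (or Euler-characteristic) computations of Theorem~\ref{thm:K-theory}, and it is the step I would take care to state precisely.
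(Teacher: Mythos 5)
Your arithmetic is correct and in fact cleaner than the paper's endgame: with $a=m-1$, $b=n-1$, $p=ab$, $s=a+b$, the identities $12P=p^2-ps+p$ and $2Q=ps-2p$ do give $p^2-p=12P+2Q$, monotonicity of $t\mapsto t^2-t$ determines $p$, then $s=2+2Q/p$, and $\{m,n\}$ follows (this even works when $m=2$, where $P=0$). The paper's own proof extracts essentially the same two quantities, but from the class $[\cX_3]\in K_0(\VarC)$ of Theorem \ref{thm:K-theory}: the coefficient of $\LL^4$ gives $12P$, and evaluating at $\LL=0$ and $\LL=1$ after removing the quartic term gives quantities from which $(n-1)(m-1)$, then $(n-2)(m-2)$, $nm$ and $n+m$ are recovered. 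The point of that detour through $K_0$ is exactly the issue you flagged: the K-theory class is an isomorphism invariant of the variety for free, so no identification of strata is needed.

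Your resolution of that issue has a genuine gap, in two parts. First, the parenthetical claim that the four classes have pairwise distinct Euler characteristics is false: at $\LL=1$, both $\LL^2$ and $\LL^2-3\LL+3$ give $\chi=1$, and both $\LL^2-3\LL+2$ and $\LL^2-2\LL+1$ give $\chi=0$. The four K-classes are indeed pairwise distinct as elements of $K_0(\VarC)$ (e.g.\ their E-polynomials differ), so the four strata are pairwise non-isomorphic, but not via $\chi$. Second, and more seriously, the "components" of Propositions \ref{prop:8.1} and \ref{prop:8.2} are locally closed strata, not irreducible components of the abstract variety $\cX_3$. The actual irreducible components are the closures, and by Section \ref{sec:inters} these realize only two $2$-dimensional isomorphism types: $X_{TR}\cong\CC^2$ and $\overline{\cX_{3,\kappa}^*}\cong\CC^2$, while $\overline{X_{PR}^{1,i}}\cong\overline{X_{PR}^{2,j}}\cong\CC\times\CC^*$. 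Read literally, the number of irreducible components isomorphic to $(\CC^*)^2-\{x+y=1\}$ is zero, and pairwise non-isomorphism of the open strata does not by itself make $Q$ intrinsic, since an abstract isomorphism is only guaranteed to permute the closed components. The gap is fixable within your framework: either count the $2$-dimensional components isomorphic to $\CC^2$, of which there are exactly $1+Q$; or observe, using the intersection patterns of Section \ref{sec:inters}, that each stratum is intrinsically recoverable as a component minus the union of all other components (for a type II component this removes precisely the three boundary lines and returns $(\CC^*)^2-\{x+y=1\}$). With either repair, your argument goes through and is a legitimate alternative to the paper's K-theoretic one.
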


 \begin{proof}
With $\cX_3$ we have the class $[\cX_3]\in K_0(\VarC)$ given in Theorem \ref{thm:K-theory}. The coefficient of 
$\LL^4$ gives us $(n-1)(n-2)(m-1)(m-2)$. Now we substract the 
term $\frac1{12}(n-1)(n-2)(m-1)(m-2)(    \LL^4+4\LL^3-9\LL^2-3\LL+12)$. 
In the expression that we obtain, we substitute $\LL \to 0$, producing $p=\frac12 (n-1)(m-1) (3n+3m-11)$.
Substituting $\LL \to 1$, it yields $q=1+\frac12 (n-1)(m-1)(n+m-4)$. The quantity 
$2p-6q+6=(n-1)(m-1)$. Now we can recover $(n-2)(m-2)$ as well, and with this
we get $nm$ and $n+m$. This proves the result.
\end{proof}

\begin{remark}\label{rem:otro-mas}
Note also that we can describe the $4$-dimensional components $\cM/(T\x_D T)$ of Proposition \ref{prop:8.2} as follows.
Each column of the matrix (\ref{eqn:M}) gives a point $p_j=[a_{1j},a_{2j},a_{3j}]\in \PP^2$. These points must 
be independent, they cannot be equal to $q_1=[1,0,0],q_2= [0,1,0]$ or $q_3=[0,0,1]$, and two of them cannot
be simultaneously in a coordinate line $L_1=\la q_2,q_3\ra,  L_2=\la q_1,q_3\ra$ or  $L_3=\la q_1,q_2\ra$.
Let 
 \begin{align*}
 \cV = \{( &p_1,p_2,p_3) \in (\PP^2-\{q_1,q_2,q_3\})^3 \text{ independent} | \\ 
 & \text{no two of them lie simultaneously in
either } L_1, L_2,L_3\}
\end{align*}
Consider the $(\CC^*)^2$ action given by $p_j=[a_{1j},a_{2j},a_{3j}] \mapsto [\alpha a_{1j},\beta a_{2j},a_{3j}]$, $j=1,2,3$.
Then 
 $$
 \cM/(T\x_D T)=\cV/(\CC^*)^2. 
 $$
\end{remark}

\section{Intersection patterns} \label{sec:inters}

We have the stratification
 \begin{equation}\label{eqn:strat}
\cX_3=X_{TR}\sqcup \left( \bigsqcup X_{PR}^{1,i} \right) \sqcup \left(\bigsqcup X_{PR}^{2,j}\right)  \sqcup
 \left( \bigsqcup_{\tau \in F} \cX_{3,\tau}^{*} \right)
\sqcup \left( \bigsqcup_{\kappa\in G} \cX_{3,\kappa}^{*} \right) 
 \end{equation}
into totally reducible representations, partially reducible (of type 1 and type 2), 
and irreducible representations (of type I and of type II).
The set $X_{TR}$ is closed.
We want to describe how the closures $\overline{X_{PR}^{1,i}}$, $\overline{X_{PR}^{2,j}}$, $\overline{\cX_{3,\kappa}^*}$
and $\overline{\cX_{3,\tau}^*}$ intersect the other strata.

The component $X_{TR}\cong \CC^2$ is closed. By Proposition \ref{prop:3}, it is parametrised by
$(x,y)$, where $x=t_1+t_2+t_3$, $y=t_1t_2+t_1t_3+t_2t_3$,
$t_1t_2t_3=1$, the matrices $(A,B)$ being 
$A=\diag(t_1^m,t_2^m,t_3^m)$, $B=\diag(t_1^n,t_2^n,t_3^n)$.
  
\begin{proposition}
The partially reducible components of both types, $X_{PR}^{1,i} = \CC^*\x 
(\CC-\{0,1\})$ and $X_{PR}^{2,j} = \CC^*\x \CC^*$,  have closure $\CC^*\x 
\CC$. Their intersection with $X_{TR}$ are the curves with equation
$$
x^2 y^2-(c_k+2)(x^3+y^3)+(c_k^2+5 c_k+4) x y -(c_k+1)^3=0
$$
where $c_k=2\cos \frac{2\pi k}{m n}$ for $k\in\mathbb Z$, $k\not\in m 
\mathbb{Z}$, and $k\not\in n \mathbb{Z}$.

Two curves indexed by $k$ and $k'$ belong to the closure of same component if and only if
$k'\equiv \pm k \pmod m$ and   $k'\equiv \pm k \pmod n$. A curve belongs the 
closure of a type 2 component precisely when, with $n$ even, $k\equiv n/2 \pmod n$.
\end{proposition}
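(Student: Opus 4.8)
The plan is to exploit the identification $X_{PR}\cong\tilde\cX_2^*$ of Proposition \ref{prop:8.1}: a partially reducible $\SL(3,\CC)$-representation is $(\diag(A_W,(\det A_W)^{-1}),\diag(B_W,(\det B_W)^{-1}))$ for $(A_W,B_W)$ an irreducible $\GL(2,\CC)$-representation. With this identification the closure statement, that each partially reducible component has closure $\CC^*\x\CC$, follows from the closure computation of Proposition \ref{prop:7.3}; and the only further degeneration is where $(A_W,B_W)$ becomes reducible, hence (after semisimplification) simultaneously diagonal, so that the whole representation becomes totally reducible. Thus $\overline{X_{PR}}\cap X_{TR}$ is the image in $X_{TR}$ of these degenerations, and computing it explicitly is the real content.

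First I would record the eigenvalues. By Lemma \ref{lem:red} applied to $(A_W,B_W)$, the eigenvalues of $A$ are $\alpha_1,\alpha_2,\alpha_3=(\alpha_1\alpha_2)^{-1}$ and those of $B$ are $\beta_1,\beta_2,\beta_3=(\beta_1\beta_2)^{-1}$, with $\alpha_1\ne\alpha_2$, $\beta_1\ne\beta_2$, and $\alpha_1^n=\alpha_2^n=\beta_1^m=\beta_2^m=\varpi_W$, $\alpha_3^n=\beta_3^m=\varpi_W^{-2}$. A totally reducible limit pairs the eigenvalues along common eigenlines; by Lemma \ref{lemma:X1} a pair $(\alpha_i,\beta_i)$ with $\alpha_i^n=\beta_i^m$ determines a unique $t_i\in\CC^*$ with $t_i^m=\alpha_i$, $t_i^n=\beta_i$, and $(t_1t_2t_3)^m=(t_1t_2t_3)^n=1$ forces $t_1t_2t_3=1$. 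The key discrete invariant is $\eta=t_1/t_2$: from $\eta^m=\alpha_1/\alpha_2$ (an $n$-th root of unity $\ne1$), $\eta^n=\beta_1/\beta_2$ (an $m$-th root of unity $\ne1$) and $\GCD(m,n)=1$, it is an $mn$-th root of unity $e^{2\pi\sqrt{-1}k/(mn)}$ with $k\not\equiv0\pmod m$ and $k\not\equiv0\pmod n$, which accounts for the stated range of $k$.

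Next I would parametrize the limit curve by the $\CC^*$-scaling. Writing $t_1=\eta^{1/2}\tau$, $t_2=\eta^{-1/2}\tau$, $t_3=\tau^{-2}$ with $\tau\in\CC^*$ gives $x=c\,\tau+\tau^{-2}$ and $y=\tau^2+c\,\tau^{-1}$, where $c=\eta^{1/2}+\eta^{-1/2}$ satisfies $c^2=\eta+\eta^{-1}+2=c_k+2$ (so the sign ambiguity in $\eta^{1/2}$ drops out). Eliminating $\tau$ is the main calculation: with $S=\tau^3+\tau^{-3}$ one finds $xy=cS+c^2+1$ and $x^3+y^3=(c^3+3c)S+6c^2+S^2-2$; substituting $S=(xy-c^2-1)/c$, clearing denominators and replacing $c^2$ by $c_k+2$ yields exactly
$$x^2y^2-(c_k+2)(x^3+y^3)+(c_k^2+5c_k+4)\,xy-(c_k+1)^3=0.$$
The parametrization $\tau\mapsto(t_1,t_2,t_3)$ is injective, so the image is precisely the affine curve cut out by this equation.

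Finally I would match curves to components. A fixed component has two boundary limits, from the two eigenvalue pairings ($r=0$ and $r=1$ in Proposition \ref{prop:7.1}): writing $\alpha_1/\alpha_2=e^{2\pi\sqrt{-1}a/n}$, $\beta_1/\beta_2=e^{2\pi\sqrt{-1}b/m}$, the pairing $(\alpha_i,\beta_i)$ gives $k\equiv a\pmod n$, $k\equiv b\pmod m$, while the pairing $(\alpha_1,\beta_2),(\alpha_2,\beta_1)$ replaces $b$ by $-b$, producing $k'$ with $k'\equiv k\pmod n$, $k'\equiv -k\pmod m$. Since $c_k=c_{-k}$ (simultaneously relabelling $\alpha_1\leftrightarrow\alpha_2$, $\beta_1\leftrightarrow\beta_2$ fixes the point of $X_{TR}$ but sends $k\mapsto -k$), each boundary curve is recorded by $\pm k$, so the classes attached to one component are precisely $(\pm k\bmod n,\pm k\bmod m)$, which is the stated criterion $k'\equiv\pm k\pmod m$ and $k'\equiv\pm k\pmod n$. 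For type $2$ (with $n$ even), Propositions~\ref{prop:7.2} and \ref{prop:7.3} identify these with the $\mu_2$-fixed components, where $\alpha_1/\alpha_2=-1$; then $\eta^m=-1$ forces $e^{2\pi\sqrt{-1}k/n}=-1$, i.e.\ $k\equiv n/2\pmod n$, and the $\mu_2$-action merges the two ends ($k'\equiv -k\pmod{mn}$) into a single curve. The main obstacle is the careful bookkeeping of the two eigenvalue pairings and their sign effect modulo $m$ and modulo $n$; the elimination, though the longest step, is routine once the parametrization is in place.
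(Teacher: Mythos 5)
Your proposal is correct and its computational core coincides with the paper's proof: both parametrize the totally reducible limits by triples $\diag(\delta t,\delta^{-1}t,t^{-2})$ (your $t_1=\eta^{1/2}\tau$, $t_2=\eta^{-1/2}\tau$, $t_3=\tau^{-2}$ with $\eta=\delta^2$ is literally the same parametrization), obtain $x=(\delta+\delta^{-1})t+t^{-2}$, $y=(\delta+\delta^{-1})t^{-1}+t^{2}$, and your elimination via $S=\tau^3+\tau^{-3}$ is correct — I verified that $xy=cS+c^2+1$ and $x^3+y^3=(c^3+3c)S+S^2+6c^2-2$ with $c^2=c_k+2$ reproduce exactly the stated quartic, a computation the paper leaves implicit ("we obtain the curves of the statement"). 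Where you genuinely diverge is in how you certify which diagonal characters occur in the boundary: the paper invokes the external deformation result of \cite{HPS} (the character of $\diag(s^m,s^{-m}),\diag(s^n,s^{-n})$ deforms to irreducibles iff $\Delta_{m,n}(s^2)=0$) and then closes with a counting argument ($(m-1)(n-1)/2$ curves, "hence we obtain all of them"), whereas you read the boundary off internally from the closure descriptions of Propositions \ref{prop:7.1} and \ref{prop:7.3}: each $\SL(2,\CC)$ component compactifies by exactly the two eigenvalue pairings $r=0,1$, and the CRT bookkeeping of $\eta^m=\alpha_1/\alpha_2$, $\eta^n=\beta_1/\beta_2$ yields the classes $(\pm k \bmod n,\,\pm k \bmod m)$ attached to one component, together with the type-2 condition $k\equiv n/2 \pmod n$; this matches the paper's final paragraph but replaces the citation-plus-count by a self-contained derivation that also makes the "all admissible $k$ occur" direction direct rather than inferred from the count. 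Two spots are slightly glib, though no worse than the paper itself: the tacit identification of the closure of $X_{PR}^{\bullet,i}$ inside $\cX_3$ with the closure of the corresponding component of $\tilde\cX_2^*$ inside $\tilde\cX_2$, and the jump from injectivity of $\tau\mapsto(t_1,t_2,t_3)$ to the image being the whole affine quartic — injectivity only gives a dense constructible subset, so you should add that the quartic is irreducible (it is rationally parametrized) or check surjectivity at the finitely many possibly missing points.
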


\begin{proof}
The first assertion follows from the description of the closure of irreducible 
components in $\cX_2$, and from the relationship between $\cX_2$, $\bar \cX_2$ 
and $\tilde \cX_2$, as $\tilde \cX_2^*$ is isomorphic to the locus of partially 
reducible representations of $\cX_3$.

To describe the incidence of those components, we first discuss the incidence 
in $\cX_2$ of reducible and irreducible components. According to \cite{HPS}, the 
diagonal representation $A=\diag( s^m, s^{-m})$, $B=\diag (s^n, s^{-n})$ can be 
deformed into an irreducible representation
in $\SL(2,\CC)$  iff $\Delta_{m,n}(s^2)=0$, where 
$$
\Delta_{m,n}=\frac{(t^{m n}-1)(t-1)}{(t^m-1)(t^n-1)}
$$
is the Alexander polynomial of the torus knot.  For matrices of $\GL(3,\CC)$, 
this leads to curves in $\overline{X_{PR}^{\bullet,i}}\cap X_{TR}$, determined 
by matrices $\diag(\delta\,t,\delta^{-1}t, t^{-2})$, with 
$\Delta_{m.n}(\delta^2)=0$. In the coordinates of $\cX_{3,TR}$, the equations 
with a parameter $t\in\mathbb{C}^*$ are
$$
x=(\delta+\delta^{-1}) t+t^{-2}, \qquad 
y=( \delta+\delta^{-1} )  t^{-1}+t^{2},  
$$
with $\Delta_{m.n}(\delta^2)=0 $. Setting $c=\delta^2+\delta^{-2}$, we obtain 
the curves 
of the statement. This gives $(m-1)(n-1)/2$ curves, hence we obtain  
all of them.

The assertion on the components is proved by using the argument of 
Proposition~\ref{prop:cX2}. Namely the components of $\cX_2$ are determined by 
the eigenvalues of $A$ and $B$, that are $\{\delta^m,\delta^{-m}\}$ and   
$\{\delta^n,\delta^{-n}\}$ respectively, and then it follows from the discussion 
for the components of $\tilde \cX_2^*$ and the partially reducible components of 
$\cX_3$.
\end{proof}

\begin{proposition}
The irreducible components of type I, $\cX_{3,\tau}^*= \cM/H$, $H=T\x_D T$, have
closure $\overline{\cX_{3,\tau}^*}=\GL(3,\CC)//H$. 
The boundary strata are given by 
 \begin{itemize}
 \item Orbits coming from $3\x 3$-matrices $M$ with two zeroes in a row or a column. The representation
is S-equivalent to $V\oplus W$, so it lies in some $X_{PR}^{\bullet,i}$.
 \item Orbits coming from $3\x 3$-matrices  $M$ with three zeroes, two in a row, and two in a 
column (one of these zeroes in common). This is S-equivalent to a diagonal matrix, i.e., a 
totally reducible representation $V\oplus V'\oplus V''$ lying in $X_{TR}$.
\end{itemize}
The boundary strata consist of\/ $9$ lines of partially reducible representations 
that intersect each other following the pattern of the full graph $K_{3,3}$.
In addition the intersection points are precisely the totally reducible 
representations.
\end{proposition}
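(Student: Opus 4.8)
The plan is to take the closure statement for free from Remark~\ref{rem:x}, classify the boundary orbits by stability, and then extract the $K_{3,3}$ incidence from the combinatorics of $\frS_3$. The equality $\overline{\cX_{3,\tau}^*}=\GL(3,\CC)//H$, $H=T\x_D T$, is precisely the content of Remark~\ref{rem:x}, so the points of the boundary $\overline{\cX_{3,\tau}^*}\setminus\cX_{3,\tau}^*$ are the \emph{closed} $H$-orbits in $\GL(3,\CC)$ that are not stable, i.e.\ the semisimple reducible representations produced by the semisimplification of Section~\ref{sec:character}. By the stability criterion established inside the proof of Proposition~\ref{prop:Xtau}, a matrix $M$ is non-stable exactly when, after reordering, it has a rectangular block of zeros in the positions complementary to some $p$ columns, $0<p<3$; equivalently the representation $(A,B)$ has a common invariant subspace of dimension $1$ or $2$. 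For $r=3$ the only polystable representatives are therefore block-diagonal: either a $2\oplus1$ block (an irreducible $\GL(2,\CC)$-summand plus a line), which is partially reducible, or a $1\oplus1\oplus1$ block (a diagonal matrix), which is totally reducible. A Hilbert--Mumford one-parameter subgroup acting by $a_{ij}\mapsto t^{p_i+q_j}a_{ij}$ and driving the off-block entries to $0$ realizes the semisimplification, and matching weights to zero-patterns yields the two bullets: two zeros in a row or a column produce the $2\oplus1$ representative lying in some $X_{PR}^{\bullet,i}$, whereas three zeros arranged as two in a row and two in a column with one in common force a complete flag of invariant subspaces and hence the diagonal, totally reducible representative in $X_{TR}$.

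Next I would describe the partially reducible boundary as $9$ lines. Along $\cX_{3,\tau}^*$ the eigenvalues $\epsilon_1,\epsilon_2,\epsilon_3$ of $A$ and $\varepsilon_1,\varepsilon_2,\varepsilon_3$ of $B$ are fixed and pairwise distinct. A $2\oplus1$ representative is pinned down by the choice of the common eigenline $\langle v_i\rangle=\langle w_j\rangle$, that is by a pair $(i,j)\in\{1,2,3\}^2$; the irreducible $2\times2$ block then carries the remaining eigenvalues $\{\epsilon_{i'}\}_{i'\neq i}$ and $\{\varepsilon_{j'}\}_{j'\neq j}$, which are again fixed. With the eigenvalues prescribed, Proposition~\ref{prop:cX2} identifies the possible blocks with a copy of $\CC-\{0,1\}$, whose closure is $\CC$ and meets the totally reducible locus in two points; this is the \emph{line} attached to $(i,j)$. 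Since there are $3\cdot3=9$ pairs, there are $9$ lines.

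Finally I would read off the incidence. A totally reducible point in the closure is a full alignment $v_i=w_{\sigma(i)}$, i.e.\ a permutation $\sigma\in\frS_3$, giving $6$ points; the line $(i,j)$ passes through $\sigma$ iff $\sigma(i)=j$, and for fixed $(i,j)$ there are exactly two such $\sigma$, differing by the transposition of the two remaining indices, which are precisely the two endpoints found above. Forming the graph on these $6$ vertices with the $9$ lines as edges, each edge joins two permutations of opposite sign, so the graph is bipartite with parts the even and the odd permutations ($3+3$ vertices); every vertex meets $3$ lines, so it is $3$-regular with $9$ edges, hence $K_{3,3}$. In particular two lines meet exactly when they share a permutation, so the intersection points are precisely the totally reducible representations. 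I expect the only genuine difficulty to be the orbit bookkeeping of the first step---checking via Hilbert--Mumford that the $2\oplus1$ and $1^{\oplus3}$ blocks exhaust the closed boundary orbits and that the listed zero-patterns are complete; the $K_{3,3}$ combinatorics is then immediate.
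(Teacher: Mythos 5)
Your proposal is correct and follows essentially the same route as the paper: the closure identification via Remark~\ref{rem:x}, classification of boundary orbits through the zero-patterns of $M$ and semisimplification, and the count of $9$ lines (one per matrix entry, i.e.\ per pair $(i,j)$) and $6$ totally reducible points (one per permutation) assembling into $K_{3,3}$. The only difference is cosmetic: you make explicit the parity/bipartiteness argument identifying the graph as $K_{3,3}$ and invoke a Hilbert--Mumford one-parameter subgroup, where the paper simply asserts the incidence pattern and uses the destabilizing limits already set up in Proposition~\ref{prop:Xtau}.
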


\begin{proof}
The fact that the closure is $ \GL(3,\CC)//H$ is explained in  
Remark~\ref{rem:x}. The choice of two or three zeroes as in the statement
determines an invariant subspace or an invariant flag of the representation. Furthermore
in the closure of the orbit of this representation we find another representation
that is direct sum of irreducible ones (a semisimple one). This corresponds 
to the partially reducible and the totally reducible representations, and the
eigenvalues of each factor can be computed. In particular, for the partially reducible 
representations, we obtain a whole component of $\cX^*_2$ isomorphic
to $\CC-\{0,1\}$, whose closure is $\CC$, by adding totally reducible 
representations.

The pattern of the compactification locus is the following: 
\begin{itemize}
\item Draw a point for each totally reducible representation. This is the same 
as
selecting three entries in the matrix not in the same row or column. There is a 
total of $6$.
\item Draw a line for each partially reducible representation. This is the same 
as fixing one entry in
the matrix. The $2$-dimensional representation is given by
the $2\x 2$-minor associated to it. There is a total of 
$9$. 
\item Every line contains two points. Every point is in three lines.
\item The pattern is the full graph $K_{3,3}$. It consists of all the
edges connecting $3$ points at the top with $3$ points 
at the bottom in all possible ways.
\end{itemize}
\end{proof}

\begin{proposition} The irreducible components of type II, $\cX_{3,\kappa}^*
=\CC^2-(\{x=0\}\cup \{y=0\}\cup  \{x+y=1\})$  have closure 
$\overline{\cX_{3,\kappa}^*}=\CC^2$.  The closure consists of adding three 
lines of partially reducible representations that intersect pairwise. The three 
intersection points are precisely the totally reducible representations. 
\end{proposition}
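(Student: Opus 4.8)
The plan is to build on the explicit description of the type II components
obtained in Proposition~\ref{prop:8.2}, where we found $\cX_{3,\kappa}^* \cong
(\CC^*)^2 - \{x+y=1\}$ by realizing such a representation via the choice of an
eigenline $[v]$ for $A$ together with the line $L$ (the two-dimensional
eigenspace of $A$), after normalizing the eigenvectors of $B$ to the coordinate
frame and the dual coordinates of $L$ to $[1,1,1]$. First I would reinterpret
the coordinates $(x,y)$ geometrically: writing $[v]=[x:y:1]\in\PP^2$, the three
excluded loci $\{x=0\}$, $\{y=0\}$ and $\{x+y=1\}$ are exactly the conditions
that $[v]$ lie on one of the three lines $\la [w_i],[w_j]\ra$ spanned by pairs of
eigenvectors of $B$ (recall $[v]\notin L=\{x+y+z=0\}$ is the reducibility
condition via $L\cap \la w_i,w_j\ra$). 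Hence the closure is obtained by adding
back precisely these three lines, giving $\overline{\cX_{3,\kappa}^*}\cong\CC^2$;
the point at infinity $[v]\in L$ is already excluded and does not reappear in the
affine closure, which is the first thing I would check carefully.

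Next I would identify what representation each boundary line carries. When $[v]$
degenerates onto a line $R_{ij}=\la [w_i],[w_j]\ra$, the two-dimensional subspace
$\la u, v\ra$ (with $\la u\ra = L\cap R_{ij}$) becomes $A$- and $B$-invariant, so
the semisimplification splits off a one-dimensional piece along the remaining
eigenvector $w_k$ and a two-dimensional piece. Thus each of the three boundary
lines consists of partially reducible representations, and I would record the
eigenvalue data of the two summands to confirm that the line lands inside one of
the $X_{PR}^{\bullet,i}$ strata, using the splitting principle and the
semi-simplification argument from Section~\ref{sec:character}. The three lines
$\{x=0\},\{y=0\},\{x+y=1\}$ in $\CC^2$ meet pairwise in three points, and I would
compute these intersection points explicitly: each corresponds to $[v]$ lying
simultaneously on two of the $R_{ij}$, hence $[v]=[w_k]$ equals one of the
eigenvectors of $B$, at which stage $A$ and $B$ share a full eigenbasis and the
representation is totally reducible, landing in $X_{TR}$.

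I would then assemble these observations into the statement: the closure is
$\CC^2$, the boundary is the union of three lines of partially reducible
representations meeting pairwise, and the three pairwise intersection points are
the totally reducible representations. The main obstacle I anticipate is the
bookkeeping of \emph{which} partially reducible and totally reducible strata are
hit, i.e. matching the eigenvalue tuples of the degenerate summands against the
labels $\kappa$ of the type II components and the indices of $X_{PR}^{\bullet,i}$
and $X_{TR}$; the geometry (three lines, three points) is immediate from the
coordinate description, but verifying that the degenerate eigenvalues are
consistent with the constraints $\epsilon_i^n=\varepsilon_j^m=\varpi$,
$\varpi^3=1$ requires care. This is essentially the same type of analysis as in
the preceding proposition for type I components, so I would model the argument on
that proof, replacing the $K_{3,3}$ incidence pattern by the simpler pattern of a
triangle (three lines, three vertices).
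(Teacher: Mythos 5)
Your proposal is correct and takes essentially the same route as the paper: both parametrize the component by the position of the $A$-eigenpoint $p_0=[v]$ in the affine plane $\PP^2-L$ (with the three $B$-eigenpoints as an affine frame), obtain the closure $\CC^2$ by letting $p_0$ degenerate onto the three lines $\la [w_i],[w_j]\ra$ (partially reducible) and onto their pairwise intersections $[v]=[w_k]$ (totally reducible). One small correction: at those triple points $A$ and $B$ do not literally share a full eigenbasis (the plane $L$ need not contain $w_j$ for $j\neq k$); rather, as in the paper's proof, there are two invariant projective lines and the semisimplification is diagonal, which is what places the character in $X_{TR}$.
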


\begin{proof} Recall that in this component $A$ has an eigenvalue with 
multiplicity two and the eigenvalues of $B$ are different (it could be the same changing
the roles of $A,B$; the analysis would be similar in that case). This gives a point 
$p_0\in \PP^2$ and a line $l_0\subset \PP^2$ fixed by $A$, and three points 
$p_1,p_2,p_3\in \PP^2$ fixed by $B$, all points and lines in generic position. 
Putting $l_0$ as line at infinity, $p_1,p_2,p_3$ define an affine frame of 
$\CC^2$ and the position of $p_0$ parametrizes the component. The closure 
is obtained by allowing $p_0$ to be at any position in $\CC^2$. Notice that if 
$p_0$ belongs to the line through $p_1$ and $p_2$, then, since this line also 
meets $l_0$, it is preserved by both $A$ and $B$, hence we obtain  a partially 
reducible representation. Total reducibility is obtained when $p_0$ equals one 
of $p_1,p_2,p_3\in \PP^2$, because there are two invariant projective lines. 
\end{proof}

Next we want to understand the intersection of  the components of $X_{PR}^{\bullet,i}$ with 
the closure of
the components of irreducible representations. A component $X_{PR}^{\bullet,i}$
is determined by eigenvalues $\{\epsilon,1/\epsilon\}$
and $\{\varepsilon,1/\varepsilon\}$ corresponding to an irreducible
representation in  $\SL(2,\CC)$. The twisted Alexander polynomial
for the representation in $\SL(2,\CC)$ is constant on each component and it 
equals
$$
\Delta_{\epsilon,\varepsilon}(t)=\frac{(t^{mn}-\epsilon^n)^2}{ 
(t^m-\epsilon)(t^m-1/\epsilon)(t^n-\varepsilon)(t^n-1/\varepsilon)}.
$$
See \cite{KM} for the computation of $
\Delta_{\epsilon,\varepsilon}(t)$,  or \cite{HeusenerP} for the trefoil.

Recall that $X_{PR}^{1,i}\cong (\CC-\{0,1\})\times \CC^*$ and that 
$X_{PR}^{2,i}\cong ((\CC-\{0,1\})\times \CC^*)/\mu_2$. Let $z\in\CC^*$ denote the 
coordinate in the factor $\CC^*$.

\begin{proposition}
 A component $X_{PR}^{\bullet,i}$ intersects the closure $\cX^*_3$ precisely at the 
curves $\{z=z_0\}$ where $\Delta_{\epsilon,\varepsilon}(z_0^3)=0$.
 In addition,
a curve defined by $z=z_0$ lies in the closure of a four  dimensional component 
if $z^3_0$ is a root of multiplicity two of $\Delta_{\epsilon,\varepsilon}$, 
and a two dimensional component if it is a simple root. 
\end{proposition}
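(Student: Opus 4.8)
The plan is to decide membership of a partially reducible representation in $\overline{\cX_3^*}$ by deformation theory, reading off both the location of the intersection (the value $z_0^3$) and the target component (type I versus type II) from the twisted Alexander polynomial. Write a point of $X_{PR}^{\bullet,i}$ as $\rho=\rho_W\oplus\rho_{W'}$ with $\rho_W\in\tilde\cX_2^*$ an irreducible $\GL(2,\CC)$-representation and $\rho_{W'}=(\det\rho_W)^{-1}$, the $\CC^*$-coordinate being $z$. Decomposing the adjoint module along $\CC^3=W\oplus W'$ gives
\[
\mathfrak{sl}(3,\CC)=\bigl(\End W\oplus\End W'\bigr)_{0}\;\oplus\;\Hom(W',W)\;\oplus\;\Hom(W,W'),
\]
where the block-diagonal trace-zero summand is tangent to the reducible stratum. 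Hence $\rho$ lies in $\overline{\cX_3^*}$ exactly when it admits a deformation with nonzero off-diagonal part, and the infinitesimal version of this condition is the nonvanishing of $H^1(\G,\Hom(W',W))$ (equivalently, by the duality below, of $H^1(\G,\Hom(W,W'))$).

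First I would compute the coupling module. Since $\det\rho_W(x)=z^{2m}$ and $\det\rho_W(y)=z^{2n}$, the generator $x$ acts on $\Hom(W',W)=W\otimes(W')^*$ with eigenvalues $\{z^{3m}\epsilon,\,z^{3m}\epsilon^{-1}\}$ and $y$ with eigenvalues $\{z^{3n}\varepsilon,\,z^{3n}\varepsilon^{-1}\}$; that is, $\Hom(W',W)$ is the $\SL(2,\CC)$-representation with eigenvalue data $(\epsilon,\varepsilon)$ twisted by the abelianization character $z^{3}$ (recall $\phi(x)=m$, $\phi(y)=n$ for the relation $x^n=y^m$). This is precisely what produces the cube in $z_0^3$. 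The dual module $\Hom(W,W')$ is the same representation twisted by $z^{-3}$, and since $\Delta_{\epsilon,\varepsilon}(t)\doteq\Delta_{\epsilon,\varepsilon}(t^{-1})$ the two off-diagonal pieces vanish simultaneously. The theory of the twisted Alexander polynomial for the torus knot, computed in \cite{KM} (see also \cite{HPS,HeusenerP}), identifies $\Delta_{\epsilon,\varepsilon}$ with the Reidemeister torsion of the local system $\sigma_{\epsilon,\varepsilon}\otimes t^{\phi}$ and yields
\[
\dim_\CC H^1\bigl(\G,\sigma_{\epsilon,\varepsilon}\otimes t_0^{\phi}\bigr)=\operatorname{ord}_{t=t_0}\Delta_{\epsilon,\varepsilon}(t),
\]
valid because $H^0=H^2=0$ (the module $\sigma_{\epsilon,\varepsilon}\otimes t^{\phi}$ is irreducible and noncentral, so has no invariants). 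Combining, $H^1(\G,\Hom(W',W))\neq0$ if and only if $\Delta_{\epsilon,\varepsilon}(z_0^3)=0$, which gives the first assertion.

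For the multiplicity dichotomy I would read the order of vanishing off the explicit formula, interpreting it through eigenvalue collisions. The underlying $\SL(2,\CC)$-representation has $\varpi=\epsilon^n=\varepsilon^m\in\mu_2$, so any root $t_0=z_0^3$ of $\Delta_{\epsilon,\varepsilon}$ satisfies $t_0^{mn}=\varpi$, where the numerator $(t^{mn}-\varpi)^2$ vanishes to order $2$; this is exactly the condition $z_0^{3mn}=\varpi$ forcing $A^n$ and $B^m$ to be scalar, i.e.\ the eigenvalue constraint of Lemma \ref{lem:red}. The denominator factors $(t^m-\epsilon)(t^m-\epsilon^{-1})$ and $(t^n-\varepsilon)(t^n-\varepsilon^{-1})$ vanish at $t_0=z_0^3$ precisely when a $W$-eigenvalue of $A$, respectively of $B$, collides with the $W'$-eigenvalue $z_0^{-2m}$, resp.\ $z_0^{-2n}$; each such factor is simple, and for an admissible irreducible limit at most one can vanish (both $A$ and $B$ degenerating is impossible by Proposition \ref{prop:8.2}). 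Thus the order of $\Delta_{\epsilon,\varepsilon}$ at $z_0^3$ is $2$ when no collision occurs and $1$ when exactly one does. A double root therefore means all eigenvalues of both $A$ and $B$ remain distinct, so the limiting representation has eigenvalue data of type $\tau$ and lies in a maximal ($4$-dimensional) component $\cX_{3,\tau}^*$; a simple root means exactly one of $A,B$ acquires a repeated eigenvalue, data of type $\kappa$, so the limit lies in a $2$-dimensional component $\cX_{3,\kappa}^*$. This matches the boundary descriptions of the preceding propositions.

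I expect the genuine obstacle to be the passage from infinitesimal to actual deformations in the sufficiency direction: a nonzero class in $H^1(\G,\Hom(W',W))$ must be shown to integrate to a path of \emph{honest} irreducible representations rather than a merely formal one. I would dispatch this either by checking that the quadratic cup-product obstruction in $H^2(\G,\mathfrak{sl}(3,\CC))$ vanishes—the group $\G$ has a two-dimensional $K(\pi,1)$ and $R(\G,\SL(3,\CC))$ is cut out by the single relator, so the obstruction space is controllable—or, more economically, by matching the predicted eigenvalue data against the already-computed GIT closures $\overline{\cX_{3,\tau}^*}=\GL(3,\CC)//(T\x_D T)$ and $\overline{\cX_{3,\kappa}^*}=\CC^2$ from the previous propositions, whose partially reducible boundary representations can be listed explicitly and then checked to be exactly those with $\Delta_{\epsilon,\varepsilon}(z_0^3)=0$.
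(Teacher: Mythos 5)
Your proposal is correct in substance and its viable completion coincides with the paper's actual argument, but you reach the multiplicity dichotomy by a genuinely different route. Both you and the paper import from \cite{HeusenerP} the necessity of $\Delta_{\epsilon,\varepsilon}(z_0^3)=0$ and the sufficiency at simple roots (your computation of the coupling modules $\Hom(W',W)$, $\Hom(W,W')$, with the twist by $z^3$ coming from $\phi(x)=m$, $\phi(y)=n$, is a correct and usefully explicit account of \emph{why} that citation applies and where the cube comes from). Where you diverge is the double-root analysis: you argue locally, factoring $\Delta_{\epsilon,\varepsilon}$ and matching the numerator $(t^{mn}-\varpi)^2$ with the scalarity constraint $A^n=B^m=\varpi\,\Id$ of Lemma \ref{lem:red}, and each denominator factor with a collision between a $W$-eigenvalue and the $W'$-eigenvalue; this correctly shows a double root carries eigenvalue data with all eigenvalues of $A$ and of $B$ distinct (type $\tau$, so only a four-dimensional component can abut there), while a simple root carries exactly one repeated eigenvalue (type $\kappa$, only a two-dimensional component). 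The paper instead settles the dichotomy globally by a count: the degree of $\Delta_{\epsilon,\varepsilon}(t^3)$ is $3(2mn-2m-2n)$, and summing over all partially reducible components gives $\frac32(n-1)(m-1)(mn-m-n)$, which exactly equals the number of boundary lines supplied by the irreducible components ($9$ per four-dimensional component, counted twice, plus $3$ per two-dimensional one); since the upper bound is attained, every root is realized with the predicted type. Your local analysis buys structural insight; the paper's count buys existence at double roots, which is exactly what your second fallback (matching against the closures $\overline{\cX_{3,\tau}^*}=\GL(3,\CC)//(T\x_D T)$ and $\overline{\cX_{3,\kappa}^*}=\CC^2$ from the preceding propositions) reproduces in substance.

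Two cautions. First, your proposed obstruction-theoretic fix should not be presented as routine: at roots of the twisted Alexander polynomial the representation scheme is singular at $\rho$ and the quadratic cone is precisely where the delicate analysis of \cite{HPS} and \cite{HeusenerP} lives; for a root of multiplicity two the sufficiency statement of \cite{HeusenerP} is not available, which is exactly why the paper replaces deformation theory by the closure comparison — so only your second fallback actually closes the gap. Second, your sentence identifying membership of $\chi_\rho$ in $\overline{\cX_3^*}$ with $H^1(\G,\Hom(W',W))\neq0$ is too strong as stated: a nonzero class in $H^1(\G,\Hom(W',W))$ by itself only produces nontrivial extensions of $\rho_{W'}$ by $\rho_W$, and these have the same character as $\rho$; so $H^1\neq0$ gives necessity (via the fact that a semisimple character in the closure of the irreducible locus is a limit of irreducible representations) but not membership, which is the integrability issue you rightly flag at the end.
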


\begin{proof}
By \cite{HeusenerP}, $\Delta _{\epsilon,\varepsilon}(z^3)=0$ is a necessary 
condition for a representation with second coordinate $z\in\CC^*$ to be deformed 
to irreducible representations. In addition, for simple roots this condition is 
also sufficient and the component of the character variety of irreducible ones 
has dimension two.  Thus we prove the lemma by counting the number of such 
curves obtained from the roots, and the number of curves in the closure of the 
variety of irreducible characters. Firstly, the degree of each polynomial 
$\Delta_{\epsilon,\varepsilon}(t^3)$ is $3(2 m\, n -2m-2n)$. For $m$ and $n$ odd 
there are $\frac{m-1}2\frac{n-1}2$ possibilities for the eigenvalues 
$\{\epsilon,1/\epsilon\}$ and $\{\varepsilon,1/\varepsilon\}$, thus we find at 
most $ 3 \frac{m-1}2\frac{n-1}2 2 (m\, n-m-n)$ curves in the closure, counted 
twice for double roots. When $n$ is even, we have $\frac{m-1}2\frac{n-2}2$ 
components as above but also $(m-1)/2$ components  that are the quotient of a 
component of $\cX_2\times \CC^*$ by $\mu_2$. The contribution of those 
components is half the contribution of the other components, thus we get the 
same upper bound: $ 3( \frac{m-1}2\frac{n-2}2 + \frac{m-1}4) 2 (m\, n-m-n)= 3 
\frac{m-1}2\frac{n-1}2 2 (m\, n-m-n)$. 

On the other hand, there are $\frac 1{12}(n-1)(n-2)(m-1)(m-2)$ components of 
irreducible representation of dimension 4 that have $9$ lines in the adherence 
and that 
contribute to double roots. There are also $\frac 1{2}(n-1)(m-2)(m+n-4)$
components of dimension two that 
contribute with three lines each. Thus the total number of such lines (counted 
twice in the closure of a four dimensional component) is:
\begin{multline*}
2\cdot 9\frac 1{12}(n-1)(n-2)(m-1)(m-2) + 3 \frac 1{2}(n-1)(m-2)(m+n-4)
\\
= \frac 32 
(n-1)(m-1)(m\, n -m-n). 
\end{multline*}
As we get precisely the previous upper bound,  the proposition follows. 
\end{proof}

\section{Character varieties for $\GL(3,\CC)$ and $\PGL(3,\CC)$} \label{sec:GL3}

Now we describe the $\GL(3,\CC)$ and $\PGL(3,\CC)$-character varieties $\tilde\cX_3$ and $\bar\cX_3$.

\begin{proposition}
\label{prop:psl3} The components of the $\PGL(3,\CC)$-character variety  $\bar\cX_3$ are:
  \begin{itemize}
   \item The component of totally reducible representations, which is  isomorphic to  $\CC^2/\mu_3\cong \{(x,y,z)\in \CC^3\mid
   x\, y= z^3\}$.
   \item   $[\frac{n-1}2][\frac{m-1}2]$ components of partially reducible representations, each 
   isomorphic to $(\mathbb C -\{0,1\})\times \mathbb{C}^*$.
   \item When $n$ is even, there are $(m-1)/2$ additional components of  partially reducible representations, each 
   isomorphic to $\{(u,v)\in \CC^2 | v\neq 0, v\neq u^2\}$.
   \item When $m, n\not \in 3\ZZ $, there are the following components of irreducible representations:
   \begin{itemize}
    \item $(n-1)(m-1)(n+m-4)/6$ components isomorphic to $ (\CC^*)^2 -\{x+y=1\}$
    \item and $(m-1)(m-2)(n-1)(n-2)/36$ components of maximal dimension 
isomorphic to $\cM/(T\times_D T)$.
   \end{itemize}
    \item When $n \in 3\ZZ$, there are the following components of irreducible representations:
    \begin{itemize}
	\item   $(m-1)(m\, n + n^2- 5 n- m+2)/6$ components isomorphic to $ (\CC^*)^2 -\{x+y=1\}$,
	\item  $m-1$ components isomorphic to  $
	\{(x,y,z)\in\CC^3\mid x\, y= z^3,\ x+ y + 3z\neq 1\}$,
	\item  $(m-1)(m-2)n(n-3)/36$  components of maximal dimension  isomorphic to $\cM/(T\times_D T)$,
	\item and $(m-1)(m-2)/6$  components of maximal dimension  isomorphic to $\cM/(T\times_D T\rtimes \mu_3)$, where
	$\mu_3$ acts by cyclic permutation of columns in $\cM$.
    \end{itemize}
    The case $m\in 3\ZZ$ is symmetric.
\end{itemize}
\end{proposition}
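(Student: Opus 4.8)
The plan is to feed the stratification of $\cX_3$ from Propositions \ref{prop:8.1} and \ref{prop:8.2} through the isomorphism $\bar\cX_3\cong\cX_3/\mu_3$ of (\ref{eqn:2}) and to analyse the $\mu_3$-action stratum by stratum. Recall that $t\in\mu_3$ acts by $(A,B)\mapsto(t^mA,t^nB)$, so on eigenvalues it is $\epsilon_i\mapsto t^m\epsilon_i$, $\varepsilon_j\mapsto t^n\varepsilon_j$, and on a component-label $\tau$ it is the map $\tau\mapsto t\cdot\tau$ of Section \ref{sec:max-dim}. The whole computation is governed by a single arithmetic dichotomy: when $m,n\notin3\ZZ$ both $t^m$ and $t^n$ are primitive cube roots of unity, whereas when $n\in3\ZZ$ one has $t^n=1$, so only the eigenvalues of $A$ are moved. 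This is exactly the origin of the two cases in the statement, and by the $m\leftrightarrow n$ symmetry it suffices to treat $n\in3\ZZ$ and $m,n\notin3\ZZ$.

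I would first dispose of the reducible strata, which are insensitive to the dichotomy. For $X_{TR}\cong\CC^2$, Proposition \ref{prop:3} gives the weighted action $(x,y)\mapsto(\xi x,\xi^2 y)$ with $\xi=e^{2\pi\imat/3}$; since $x^3,y^3,xy$ are invariant and satisfy $(x^3)(y^3)=(xy)^3$, the argument of Proposition \ref{prop:quotient} identifies the quotient with $\{xy=z^3\}$. For the partially reducible strata, which by Proposition \ref{prop:8.1} form a copy of $\tilde\cX_2^*$, I would use that $\mu_3\subset\CC^*$ acts within each connected component along the $\CC^*$-factor of Proposition \ref{prop:7.3} (a connected group cannot permute components, and the $\CC^*$-factor is precisely the determinant-scaling direction of the ambient $\CC^*$-action). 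Proposition \ref{prop:quotient} then shows the quotient of that factor is again $\CC^*$, so both the count and the isomorphism types of the partially reducible components are reproduced verbatim, independently of the case.

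The heart of the argument is the irreducible locus, where $\mu_3$ permutes the components $\cX_{3,\tau}^*$ ($\dim 4$, indexed by $F$) and $\cX_{3,\kappa}^*$ ($\dim 2$, indexed by $G$); since $\mu_3$ has prime order, each stabiliser is either trivial or all of $\mu_3$. A nontrivial $t$ fixes a label iff multiplication by $t^m$ (resp.\ $t^n$) permutes the eigenvalues of $A$ (resp.\ $B$); as a primitive cube root permuting three \emph{distinct} eigenvalues must $3$-cycle them into $\{\epsilon,\xi\epsilon,\xi^2\epsilon\}$ with product $\epsilon^3=1$, such a fixed triple is forced to be all of $\mu_3$, and then the constraint $\epsilon_i^n=\varpi$ requires $\xi^n=1$, i.e.\ $3\mid n$. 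Hence when $m,n\notin3\ZZ$ the action is \emph{free} on both $F$ and $G$: the counts of Proposition \ref{prop:8.2} are simply divided by $3$ and the isomorphism types are unchanged, giving $(m-1)(m-2)(n-1)(n-2)/36$ maximal components $\cM/(T\x_D T)$ and $(n-1)(m-1)(n+m-4)/6$ components $(\CC^*)^2-\{x+y=1\}$.

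When $n\in3\ZZ$ I would enumerate the fixed labels. A $4$-dimensional label is fixed exactly when the $A$-eigenvalues are $\mu_3$ and the $B$-eigenvalues are a triple of distinct $m$-th roots of unity of product $1$; a short inclusion–exclusion (using $\mu_m\cap\mu_3=\{1\}$) counts $(m-1)(m-2)/6$ such triples, each a single fixed point contributing one component, which by the formula $\cX^*_{3,\tau}/S=\cM/(H\x S)$ of Section \ref{sec:max-dim} (with $H=T\x_D T$ and $S=\mu_3$ acting by the induced $3$-cycle on the eigenbasis, i.e.\ cyclically on the columns of $M$) is $\cM/(T\x_D T\rtimes\mu_3)$; subtracting these from $|F|$ and dividing the remainder by $3$ gives $(m-1)(m-2)n(n-3)/36$ ordinary maximal components. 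For the $2$-dimensional labels, the type with a repeated eigenvalue of $A$ is never fixed (a double eigenvalue cannot be $3$-cycled), whereas the type with $A$ of distinct eigenvalues and $B$ repeated is fixed precisely when the $A$-eigenvalues are $\mu_3$, yielding $m-1$ fixed labels; combining the free parts of both types and dividing by $3$ produces $(m-1)(mn+n^2-5n-m+2)/6$ components $(\CC^*)^2-\{x+y=1\}$. The step I expect to be the main obstacle is identifying the quotient of such a fixed $2$-dimensional component: here $\mu_3$ cyclically permutes the three eigenvectors of $A$, extending to a linear order-$3$ action on the closure $\overline{\cX_{3,\kappa}^*}\cong\CC^2$, and diagonalising this regular-representation action turns it into the weighted action of the totally reducible case, so the quotient of the closure is again $\{xy=z^3\}$; I would then track the image of the single removed line $\{x+y=1\}$ to verify that it becomes $\{x+y+3z=1\}$, giving $\{xy=z^3,\ x+y+3z\neq1\}$. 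Checking this coordinate computation, together with the two fixed-point counts, is the most delicate part, after which the symmetry $m\leftrightarrow n$ settles the case $m\in3\ZZ$.
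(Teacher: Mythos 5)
Your proposal is correct and takes essentially the same approach as the paper: both pass through $\bar\cX_3\cong\cX_3/\mu_3$, check that the action preserves each reducible stratum (quotienting the $\CC^*$-direction to leave the partially reducible components unchanged, and $X_{TR}$ becoming $\{xy=z^3\}$ via $u=x^3$, $v=y^3$, $w=xy$), show the action on the labels of irreducible components is free unless $3\mid m$ or $3\mid n$, count the fixed labels (your $(m-1)(m-2)/6$ maximal and $m-1$ two-dimensional fixed labels for $n\in3\ZZ$, and the resulting orbit counts, all match the paper), and identify the quotients of fixed components exactly as the paper does, via the column-permutation action on $\cM$ and the order-$3$ affine action on $\CC^2$ whose invariant coordinates give $\{x\,y=z^3,\ x+y+3z\neq 1\}$. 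One cosmetic slip: the two-dimensional component has \emph{three} removed lines forming a single $\mu_3$-orbit (the action $(x,y)\mapsto(1-x-y,x)$ cyclically permutes $\{x=0\}$, $\{y=0\}$, $\{x+y=1\}$), so one tracks the image of any one of them, which is indeed the single curve $x+y+3z=1$, just as you intend.
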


\begin{proof}
Use the isomorphism  $\bar\cX_3 = \cX_3/\mu_3$ by \eqref{eqn:2}, where $\mu_3=\{1,\varpi,\varpi^2\}$,
and the stratification (\ref{eqn:strat}).
 The component $X_{TR}\cong\mathbb C^2$ is invariant by $\mu_3$ and  $\varpi$ 
 maps a point with coordinates $(x,y)$ to $(\varpi x, 
\varpi^2 y)$. Hence the quotient is $\CC^2/\mu_3 \cong \{(u,v,w)\in \CC^3 | w^3=u\,v\}$, by $u=x^3, v=y^3, w=x\, y$.

The components $X_{PR}^{1,i}$ correspond to components of 
$\tilde \cX_2^*\times \CC^*$ that are not preserved by $\mu_2$. Here 
$\varpi$ acts trivially on  $\tilde \cX_2^*$ and by multiplication by a third 
rood of unity on $ \mathbb{C}^*$. This yields $[\frac{n-1}2][\frac{m-1}2]$ components 
isomorphic to $((\mathbb C -\{0,1\})\times \mathbb{C}^*)/\mu_3\cong (\mathbb C -\{0,1\})\times \mathbb{C}^*$. 

For $n$ even, the 
components $X_{PR}^{2,j}$ are isomorphic to $\{(u,v)\in \CC^2 | v\neq 0, v\neq 
u^3\}$. To understand the action of $\mu_3$, we build on the proof of \ref{prop:7.3}.
Recall from the proof of Proposition~\ref{prop:7.3} that $u=(2 r-1)\lambda$ and
$v=\lambda^2$, where $r\in (\CC-\{0,1\}$ is the coordinate of a component 
$\cX_2$ and $\lambda\in \CC^*$. Since $\varpi$ acts trivially on $r$ and maps
$\lambda$ to $\varpi\lambda$, the action of $\varpi$ on those coordinates is 
$(u,v)\mapsto (\varpi u, \varpi^2 v)$. The 
quotient is isomorphic to 
$\{(z,w)\in \CC^2 | w\neq 0, w\neq z^2\}$, via $z=u/v^2, w=1/v^3$.
 
We next discuss the irreducible components. The two-dimensional components 
$\cX_{3,\kappa}^{*}$, $\kappa \in G$, are parametrized by the eigenvalues of $A$ and $B$: 
$\{\epsilon,\epsilon,1/\epsilon^2\}$ and 
$\{\varepsilon_1,\varepsilon_2,\varepsilon_3\}$ subject to 
$\epsilon^n=\varepsilon_i^m=\varpi^k$, $\varepsilon_1 
\varepsilon_2 \varepsilon_3=1$, $\epsilon\neq1/\epsilon^2$ and 
$\varepsilon_1,\varepsilon_2,\varepsilon_3$ distinct. $\varpi$ maps those eigenvalues to 
$\{\varpi^m\epsilon,\varpi^m\epsilon,1/(\varpi^m\epsilon)^2\}$ and 
$\{\varepsilon_1 \varpi^n,\varepsilon_2\varpi^n,\varepsilon_3 \varpi^n\}$. They 
happen to be the same set of eigenvalues precisely when $m\in 3\ZZ$ and 
$\{\varepsilon_1,\varepsilon_2,\varepsilon_3\}=\{1,\varpi,\varpi^2\}$. Therefore 
$\mu_3$ permutes all components $\cX_{3,\kappa}^{*}$ except when  $m\in 3\ZZ$, 
that $n-1$ components are preserved. Thus when $m, n\not\in 3\ZZ$ we obtain 
$(n-1)(m-1)(n+m-4)/6$ components isomorphic to $ (\CC^*)^2 - \{x+y=1\}$. When 
$m\in 3\ZZ$ we obtain $(n-1)(m\, n+m^2- 5 m-n+2)/6$ such components, and  $n-1$ 
additional components isomorphic to $( (\CC^*)^2 - \{x+y=1\})/\mu_3$, where 
$\mu_3$ acts linearly on $\CC^2$ and cyclically permutes the three lines that we have removed, 
that is $(x,y)\mapsto (1-x-y,x)$. 
The quotient is isomorphic to $\{(u,v,w)\in\CC^3\mid u\, v= w^3 , \  u+ v +3w\neq 1\}$, 
by taking coordinates $u=\frac19 ((3x-1)-\varpi (3y-1))^3, 
v=\frac19((3x-1)-\varpi^2 (3y-1))^3, w=9^{-2/3} ((3x-1)-\varpi (3y-1))((3x-1)-\varpi^2 (3y-1))$.
 
The four dimensional components  $\cX_{3,\kappa}^{*}$  are parametrized by  
$\{\epsilon_1,\epsilon_2,\epsilon_3\}$ and 
$\{\varepsilon_1,\varepsilon_2,\varepsilon_3\}$ distinct, subject to 
$\epsilon_i^n=\varepsilon_j^m=\varpi^k$, 
$\epsilon_1\epsilon_2\epsilon_3=\varepsilon_1 \varepsilon_2 \varepsilon_3=1$,
being the eigenvalues of $A$ and $B$ 
respectively. The generator of the cyclic group maps those eigenvalues to 
$\{\epsilon_1 \varpi^m,\epsilon_2\varpi^m,\epsilon_3 \varpi^m\}$ and  
$\{\varepsilon_1 \varpi^n,\varepsilon_2\varpi^n,\varepsilon_3 \varpi^n\}$. An 
elementary computation proves that  a component is invariant precisely when  
$n\in 3\ZZ$ and 
$\{\epsilon_1,\epsilon_2,\epsilon_3\}=\{1,\varpi,\varpi^2\}$, or $m\in 3\ZZ$ 
and $\{\varepsilon_1,\varepsilon_2,\varepsilon_3\}=\{1,\varpi,\varpi^2\}$. 
Hence when $m,n\not \in 3\ZZ$ we obtain $(m-1)(m-2)(n-1)(n-2)/36$ components 
isomorphic to the components of $\cX_{3,\kappa}^{*}$. When $n \in 3\ZZ$ we 
obtain $(m-1)(m-2)n(n-3)/36$ such components and $(m-1)(m-2)/6$ components that 
are quotiented by the action of $\mu_3$, which can be interpreted as cyclic 
permutation of columns in $\cM$.
\end{proof}
  
\begin{remark}
The closure of each component can be easily deduced from the closure of 
the components of $\cX_3$. Just notice that, when $n\in 3\ZZ$,
the closure of each component isomorphic to
$\{(x,y,z)\in\CC^3\mid x\, y= z^3,\ x+ y + 3z\neq 1\}$ is
the hypersurface $x\, y= z^3$ and 
the curve of reducible 
representations $x+ y + 3z = 1$ is singular exactly at $-x=-y=z=1$, the  
totally reducible representation. 
\end{remark}

\begin{corollary}
 The K-theory class of the character variety $\bar\cX_3$ is as follows:
\begin{itemize}
\item If $n,m\equiv 1,5 \pmod 6$, then $[\bar\cX_3]=P_0+
   \frac1{36} (m-1)(m-2)(n-1)(n-2) P_1+
   \frac16(n-1)(m-1)(n+m-4) P_3 + 
   \frac14(n-1)(m-1)  P_5$.
\item If $n\equiv 2,4 \pmod 6$, $m\equiv 1,5\pmod 6$, then $[\bar\cX_3]= P_0+
   \frac1{36} (m-1)(m-2)(n-1)(n-2) P_1+
   \frac16(n-1)(m-1)(n+m-4) P_3 + 
   \frac14(n-2)(m-1)  P_5 + \frac12 (m-1) P_6$.
\item If $n\equiv 3 \pmod 6$,  $m\equiv 1,5\pmod 6$, then $[\bar\cX_3]= P_0+
   \frac1{36} (m-1)(m-2)n(n-3) P_1+ \frac16(m-1)(m-2) P_2+
   \frac16(m-1)(mn+n^2-5n-m-2) P_3 + (m-1) P_4+
   \frac14(n-1)(m-1)  P_5$.
\item If $n\equiv 0 \pmod 6$,  $m\equiv 1,5\pmod 6$, then $[\bar\cX_3]= P_0+
   \frac1{36} (m-1)(m-2)n(n-3) P_1+ \frac16(m-1)(m-2) P_2+
   \frac16(m-1)(mn+n^2-5n-m-2) P_3 + (m-1) P_4+
   \frac14(n-2)(m-1)  P_5 + \frac12 (m-1) P_6$.
\item If $n\equiv 2,4 \pmod 6$, $m\equiv 3 \pmod 6$, then $[\bar\cX_3]= P_0+
   \frac1{36}m (m-3) (n-1)(n-2) P_1+ \frac16(n-1)(n-2) P_2+
   \frac16(n-1)(mn+m^2-n-5m-2) P_3 + (n-1) P_4+
   \frac14(n-2)(m-1)  P_5 + \frac12 (m-1) P_6$.
\end{itemize}
Here  $P_0=\LL^2$, $P_1=   \LL^4+4\LL^3-9\LL^2-3\LL+12$, 
$P_2= \LL^4 +2\LL^3-3\LL^2- \LL+4$, $P_3=\LL^2-3\LL+3$,
$P_4=\LL^2-\LL+1$, $P_5=\LL^2-3\LL+2$, $P_6=\LL^2-2\LL+1$.
\end{corollary}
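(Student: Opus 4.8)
The plan is to read off $[\bar\cX_3]$ directly from the geometric decomposition of Proposition \ref{prop:psl3}, since the class in $K(\VarC)$ is additive over the stratification of $\bar\cX_3$ into its components. Thus I would assign to each component type its class $P_i$ and sum over the number of components of that type, which is exactly what is tabulated in Proposition \ref{prop:psl3} in each of the regimes $m,n\not\in 3\ZZ$, $n\in 3\ZZ$, and $m\in 3\ZZ$. Refining each regime by the parity of $n$ (which decides whether the partially reducible components of type 2 are present) produces precisely the five cases $n,m\equiv 1,5$; $n\equiv 2,4$; $n\equiv 3$; $n\equiv 0\pmod 6$ (and the symmetric $m\equiv 3$), so the bookkeeping of coefficients will match once the seven classes $P_0,\dots,P_6$ are known.

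Most of the $P_i$ are immediate. First I would record $P_3=[(\CC^*)^2-\{x+y=1\}]=(\LL-1)^2-(\LL-2)=\LL^2-3\LL+3$, $P_5=[(\CC-\{0,1\})\x\CC^*]=(\LL-2)(\LL-1)$, and $P_6=[\{v\neq 0,\,v\neq u^2\}]=(\LL-1)^2$, each by an elementary stratification of a torus or affine plane according to the vanishing of coordinates. The class $P_0=[\CC^2/\mu_3]=\LL^2$ follows from the last assertion of Proposition \ref{prop:quotient}, and $P_1=[\cM/(T\x_D T)]=\LL^4+4\LL^3-3\LL^2-15\LL+12$ is exactly the computation already carried out in the proof of Theorem \ref{thm:K-theory}. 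For the freely permuted $\mu_3$-orbits of components, no genuine quotient computation is needed: an orbit of three distinct components maps isomorphically onto a single copy of one of them, so it contributes that representative's class once, which is why the counts in Proposition \ref{prop:psl3} already appear divided by $3$.

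The two remaining classes come from components that are individually $\mu_3$-invariant, and here a finite quotient must genuinely be computed; the subtlety is that in $K(\VarC)$ one cannot simply divide by $3$. For $P_4$ I would sidestep this by using the explicit birational model: the invariant two-dimensional component is $((\CC^*)^2-\{x+y=1\})/\mu_3$, which via the coordinates of Proposition \ref{prop:psl3} is $\{(x,y,z)\in\CC^3\mid xy=z^3,\ x+y+3z\neq 1\}$. Since $\{xy=z^3\}\cong\CC^2/\mu_3$ has class $\LL^2$ by Proposition \ref{prop:quotient}, it remains only to compute the class of the removed curve $\{xy=z^3,\ x+y+3z=1\}$, which is the $\mu_3$-quotient of the triangle of three lines forming the deleted locus; stratifying that triangle into its three edges (freely permuted) and its three vertices (freely permuted) gives curve class $(\LL-2)+1=\LL-1$, whence $P_4=\LL^2-(\LL-1)=\LL^2-\LL+1$.

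The hard part is $P_2=[\cM/(T\x_D T\rtimes\mu_3)]=[(\cM/(T\x_D T))/\mu_3]$, the quotient of the four-dimensional component $Q=\cM/(T\x_D T)$ by the cyclic permutation of columns. I would compute it by stratifying $Q$ into its $\mu_3$-fixed locus $Q^{\mu_3}$ and the free complement, so that $[Q/\mu_3]=[Q^{\mu_3}]+[(Q-Q^{\mu_3})/\mu_3]$. Using the model of Remark \ref{rem:otro-mas}, $Q=\cV/(\CC^*)^2$ with $\mu_3$ cyclically permuting the three points $p_1,p_2,p_3$; the fixed locus consists of configurations that are cyclic up to the torus action, which I would identify explicitly and whose class I would read off. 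For the free complement, the essential tool is again Proposition \ref{prop:quotient}: after stratifying the free part into pieces that are (bundles of) algebraic tori, each $\mu_3$-quotient of a torus factor has the same class as the torus itself, so the free part is \emph{not} ``divided by three'' and instead contributes with full leading term $\LL^4$, consistent with the shape of $P_2$. Carrying out this stratification carefully to obtain $[Q^{\mu_3}]$ and to control the torus pieces of $Q-Q^{\mu_3}$ is the main obstacle; once $P_2=\LL^4+2\LL^3-3\LL^2-\LL+4$ is in hand, substituting all seven classes into the additive sum dictated by Proposition \ref{prop:psl3} yields the five displayed formulas.
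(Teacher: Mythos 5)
Your scaffolding agrees with the paper's proof: additivity of classes over the stratification of Proposition \ref{prop:psl3}, $P_0=[\CC^2/\mu_3]=\LL^2$ via Proposition \ref{prop:quotient}, $P_1$ imported from Theorem \ref{thm:K-theory}, the elementary classes $P_3,P_5,P_6$, the observation that a $\mu_3$-orbit of three distinct components contributes a single representative's class, and even your computation of $P_4$ (removing the class $\LL-1$ of the quotiented triangle of lines from $[\{xy=z^3\}]=\LL^2$) is exactly the paper's ``single line with two points identified.'' But the one genuinely new computation in this corollary, $P_2=[\cM/(T\x_D T\rtimes\mu_3)]$, is precisely the step you do not execute, and your proposed route through $[Q/\mu_3]=[Q^{\mu_3}]+[(Q-Q^{\mu_3})/\mu_3]$ has a real gap: in $K(\VarC)$ the class of a free quotient is not determined by the class of the total space (as you note, one cannot divide by $3$), so everything hinges on actually exhibiting $Q-Q^{\mu_3}$ as a union of $\mu_3$-stable strata which are tori with a \emph{diagonal weight} action, the only situation Proposition \ref{prop:quotient} covers. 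The action here is by cyclic permutation of the points $p_1,p_2,p_3$ of Remark \ref{rem:otro-mas}, not a weight action; the natural strata used to compute $[Q]$ in Theorem \ref{thm:K-theory} are not $\mu_3$-stable; and the fixed locus $Q^{\mu_3}$ consists of configurations cyclic only up to the residual $(\CC^*)^2$-action, which you acknowledge but never identify. Asserting that the free part ``contributes with full leading term $\LL^4$, consistent with the shape of $P_2$'' is reasoning backwards from the stated answer rather than deriving it.

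The paper avoids the fixed/free dichotomy altogether and works upstairs with the configuration space $\cV$, splitting into two cases according to position relative to $L_1$. When all three $p_i$ lie off $L_1$, the ambient space is $(\CC^2)^3=\CC^6$ with $\mu_3$ permuting the three factors; this is a \emph{linear} action, diagonalizable over $\CC$, so Proposition \ref{prop:quotient} gives $[\CC^6/\mu_3]=\LL^6$ (and likewise $[\CC^3/\mu_3]=\LL^3$ for collinear triples), after which the degenerate loci (equal points, collinear triples, a point equal to $q_1$, two points on a coordinate line) are subtracted, using the cyclic symmetry to normalize which point is special. When exactly one $p_i$ lies on $L_1$, the permutation is used to put $p_1\in L_1-\{q_2,q_3\}$, eliminating the group action outright by normalization. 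Adding the two contributions and dividing by $(\LL-1)^2$ for the residual free torus action yields $P_2=\LL^4+2\LL^3-3\LL^2-\LL+4$. These two mechanisms --- applying Proposition \ref{prop:quotient} to linearized permutation actions on affine pieces, and killing the action by choosing a distinguished point --- are exactly what your sketch lacks; to repair it you would have to supply an explicit equivariant stratification of this kind, since as written the determination of $P_2$ is a plan, not a proof.
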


\begin{proof}
The proof is analogous to that of Theorem \ref{thm:K-theory}. 
First $[\CC^2/\mu_3]=\LL^2$ by Proposition \ref{prop:quotient}. Second,
$[( (\CC^*)^2 - \{x+y=1\})/\mu_3]=\LL^2-(\LL-1)$, since the three lines
in the quotient produce a single line with two points identified.
Finally, it remains to compute the
K-theory class $[\cM/(T\times_D T\rtimes \mu_3)]$.

For this, we use the description of $\cM/(T\times_D T)$ given in Remark \ref{rem:otro-mas}. The
action of $\mu_3$ is given by cyclic permutation of $p_1,p_2,p_3$. We divide the computation in
two cases:
 \begin{itemize}
\item All $p_1,p_2,p_3$ lie off $L_1$. Then $p_1,p_2,p_3\in \CC^2=\PP^2-L_1$. The
quotient $((\CC^2)^3)/\mu_3$ has class $\LL^6$, by Proposition \ref{prop:quotient}. Now we
have to remove various contributions as given in the definition of $\cV$:
 \begin{enumerate}
 \item If $p_i$ are equal, we get $\LL^2$.
 \item $p_i$ are not equal but lie on a line. The space parametrizing
lines is a $\PP^2$ minus a point corresponding to the line $L_1$, hence giving $[\PP^2]-1=\LL^2+\LL$.
The space parametrizing triples of points in a line is $[(\CC^3)/\mu_3]=\LL^3$, by Proposition \ref{prop:quotient}.
As we are considering unequal points, we have $\LL^3-\LL$. Multiplying, we get $(\LL^2+\LL)(\LL^3-\LL)$.
\item $p_i$ are independent and one is equal to $q_1$. Using $\mu_3$, we can suppose $p_1=q_1$. Then
$p_2\in \CC^2-\{q_1\}$, and $p_3$ lies off the line $\la q_1,p_2\ra$. This yields $(\LL^2-1)(\LL^2-\LL)$.
\item Two of $p_1,p_2,p_3$ lie on $L_2$ or $L_3$ (and the three are independent and not $q_1$). Using
the cyclic permutation, we can assume this happens to $p_1,p_2$. Then we have $2(\LL-1)(\LL-2)(\LL^2-\LL)$,
the last factor accounting for the fact that $p_3\notin \la p_1,p_2\ra$.
\end{enumerate}
This gives a total of 
 $\LL^6-  (\LL^2+(\LL^2+\LL)(\LL^3-\LL)+ (\LL^2-1)(\LL^2-\LL)+2(\LL-1)(\LL-2)(\LL^2-\LL) =
( \LL-1)^2 (\LL^4 +\LL^3-3\LL^2+3\LL)$.
\item If one of  $p_1,p_2,p_3$ lie in $L_1$, the cyclic permutation allows to assume that $p_1\in L_1-\{q_2,q_3\}$. 
This produces a factor $(\LL-1)$.
Also $p_2,p_3\in \CC^2$, producing $\LL^4$. We remove:
 \begin{enumerate}
 \item If $p_i$ are equal, we get $\LL^2$.
 \item $p_i$ are not equal but they are collinear. This gives $\LL^2(\LL-1)$.
\item One $p_i$ is equal to $q_1$, and the remaining $p_j$ is not collinear with the other two.
This yields $2 (\LL^2-\LL)$.
\item $p_2,p_3$ lie on $L_2$ or $L_3$ (and are not $q_1$). Then we have $2(\LL-1)(\LL-2)$.
\end{enumerate}
This gives a total of 
$(\LL-1) ( \LL^4-  (\LL^2+\LL(\LL^2-\LL)+2 (\LL^2-\LL)+2(\LL-1)(\LL-2))) =( \LL-1)^2 (\LL^3-4\LL+4 )$.
\end{itemize}
Adding both contributions and dividing by $(\LL-1)^2$, we get
 $$
[\cM/(T\times_D T\rtimes \mu_3)]= \LL^4 +2\LL^3-3\LL^2- \LL+4 .
 $$
\end{proof}

An argument similar (but longer) to that in Corollary \ref{cor:n,m} proves that 
one can recover $n,m$ up to order from the K-theory class $[\bar\cX_3]$.

\begin{proposition} The components of the $\GL(3,\CC)$-character variety  
$\tilde\cX_3$ are:
  \begin{itemize}
   \item The component of totally reducible representations, isomorphic to $\CC^2\times \CC^*$.
   \item  $[\frac{n-1}2][\frac{m-1}2]$  components of partially reducible 
representations, each 
   isomorphic to $(\mathbb C -\{0,1\})\times (\mathbb{C}^*)^2$.
   \item When $n$ is even, there are $(m-1)/2$ additional components of  
partially reducible representations, each 
   isomorphic to $\{(x,y,z)\in \CC^3 | y,z\neq 0, y\neq z^2\}$.
   \item When $m, n\not \in3\ZZ$, there are the following components of 
irreducible representations:
   \begin{itemize}
    \item $(n-1)(m-1)(n+m-4)/6$ components isomorphic to $ (\CC^*)^2 -\{x+y=1\}$
    \item and $(m-1)(m-2)(n-1)(n-2)/36$ components of maximal dimension 
isomorphic to $\cM/(T\times_D T)$.
   \end{itemize}
    \item When $n\in 3\ZZ $, there are the following components of irreducible 
representations:
    \begin{itemize}
	\item   $(m-1)(m\, n + n^2- 5 n- m+2)/6$ components isomorphic to 
 $((\CC^*)^2 -\{x+y=1\})\times \CC^*$.
	\item  $m-1$ components isomorphic to  $\{(u,v,w)\in\CC^3\mid u^3+v^3+3 
uv -w\neq 0, w\neq 0\}$.
	\item  $(m-1)(m-2)n(n-3)/36$  components of maximal dimension  
isomorphic to $\cM/(T\times_D T)\times \CC^*$.	
	\item $(m-1)(m-2)/6$  components of maximal dimension  isomorphic to 
$(\cM/(T\times_D T)\times \CC^*)/\mu_3$, where
	$\mu_3$ acts by cyclic permutation of columns in $\cM$ and 
multiplication on $\CC^*$.
    \end{itemize}
    The case $m\in 3\ZZ $ is symmetric.
\end{itemize}
\end{proposition}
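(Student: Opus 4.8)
The plan is to reduce everything to the already-established $\SL(3,\CC)$ picture by means of the isomorphism $\tilde\cX_3\cong(\cX_3\times\CC^*)/\mu_3$ of (\ref{eqn:1}), in exact parallel with the proof of Proposition~\ref{prop:psl3} for $\bar\cX_3\cong\cX_3/\mu_3$. The first step is to record the relevant $\mu_3$-action precisely: writing $\mu_3=\{1,\varpi,\varpi^2\}$, the generator acts on $\cX_3$ by the scaling action of Section~\ref{sec:torus} restricted to cube roots of unity (this is the very action quotiented out in Proposition~\ref{prop:psl3}) and acts on the extra factor $\CC^*$ by multiplication by $\varpi$. The crucial structural observation is that this second action is \emph{free}, so the diagonal action on $\cX_3\times\CC^*$ is free irrespective of its behaviour on $\cX_3$; hence every quotient below is a geometric quotient and may be computed stratum by stratum using $\mu_3$-invariant monomials. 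I then feed in the stratification (\ref{eqn:strat}). Since the partition of the strata of $\cX_3$ into those that $\mu_3$ permutes and those it preserves depends only on the action on $\cX_3$, it is \emph{identical} to the one determined in the proof of Proposition~\ref{prop:psl3}; in particular the trichotomy $m,n\notin3\ZZ$ versus $n\in3\ZZ$ versus $m\in3\ZZ$ arises for exactly the same reasons.

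For the strata on which $\mu_3$ acts freely (permuting components without fixing any), the computation is immediate. If $S,\varpi\cdot S,\varpi^2\cdot S$ are three distinct components of $\cX_3$ cyclically permuted, then $S\times\CC^*$ is a fundamental domain for the diagonal action on $(S\sqcup\varpi\cdot S\sqcup\varpi^2\cdot S)\times\CC^*$, because the three copies of $S$ already lie in distinct components and so no further identification occurs inside $S\times\CC^*$. Thus the quotient is $S\times\CC^*$ and the number of components is divided by three. Applying this with the descriptions of $S$ from Propositions~\ref{prop:7.3}, \ref{prop:8.1} and~\ref{prop:8.2} yields at once the $[\frac{n-1}2][\frac{m-1}2]$ partially reducible components of type~1, now carrying the extra $\CC^*$ factor and hence isomorphic to $(\CC-\{0,1\})\times(\CC^*)^2$, and, when $m,n\notin3\ZZ$, all the irreducible components: each becomes (its $\cX_3$-model)$\times\CC^*$, with the counts $\frac1{36}(n-1)(n-2)(m-1)(m-2)$ and $\frac16(n-1)(m-1)(n+m-4)$ obtained by dividing the counts of Proposition~\ref{prop:8.2} by three.

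It remains to treat the preserved strata, where the extra free $\CC^*$-direction is exploited to untwist the action. For $X_{TR}\cong\CC^2$, on which $\mu_3$ acts with weights $(1,2)$ by Proposition~\ref{prop:3}, the invariant monomials $\gamma^3,\ x\gamma^{-1},\ y\gamma^{-2}$ give $\tilde X_{TR}\cong\CC^2\times\CC^*$ directly. The preserved type~2 partially reducible components ($n$ even), on which $\mu_3$ acts by $(u,v)\mapsto(\varpi u,\varpi^2 v)$ as in the proof of Proposition~\ref{prop:psl3}, are handled by the same invariant-monomial recipe. The substantial case is $n\in3\ZZ$ (the case $m\in3\ZZ$ being symmetric), where, exactly as in Proposition~\ref{prop:psl3}, $\mu_3$ preserves $m-1$ of the two-dimensional components $\cX_{3,\kappa}^*$ and $\frac16(m-1)(m-2)$ of the four-dimensional ones. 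On a preserved four-dimensional component $\mu_3$ acts by cyclic permutation of the columns of $\cM$ (Remark~\ref{rem:otro-mas}), so the corresponding $\GL$ component is $(\cM/(T\times_D T)\times\CC^*)/\mu_3$ with $\mu_3$ permuting columns and multiplying on $\CC^*$. On a preserved two-dimensional component $\mu_3$ acts on $(\CC^*)^2-\{x+y=1\}$ by the order-three affine map $(x,y)\mapsto(1-x-y,x)$ together with multiplication on the $\CC^*$ factor; one diagonalizes the linear part of this affine action, forms the corresponding $\mu_3$-invariant combinations (untwisted against powers of $\gamma$), and reads off the stated normal form, in the spirit of the coordinates $u=x^3,v=y^3,w=xy$ used for $\CC^2/\mu_3$ in Proposition~\ref{prop:psl3}.

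The main obstacle is precisely this last family of explicit identifications for the preserved irreducible strata: diagonalizing the affine $\mu_3$-action on the two-dimensional components, choosing invariant coordinates on the product with $\CC^*$, and verifying that the three excised lines together with the added $\CC^*$ assemble into the claimed normal form. Everything else is bookkeeping that mirrors Proposition~\ref{prop:psl3} almost verbatim. Throughout, a useful consistency check is the relation $\bar\cX_3\cong\tilde\cX_3/\CC^*$ of (\ref{eqn:2}): collapsing the scaling $\CC^*$ must send each $\GL$ component to the corresponding $\PGL$ component of Proposition~\ref{prop:psl3}, which both confirms the counts and pins down whether the extra $\CC^*$ appears as a trivial direct factor or in the twisted form produced by a preserved stratum.
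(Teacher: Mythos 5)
Your overall route is exactly the paper's: the paper proves this proposition in a single paragraph by invoking the isomorphism $\tilde\cX_3\cong(\cX_3\times\CC^*)/\mu_3$ and transplanting the invariant/non-invariant classification of components from the proof of Proposition~\ref{prop:psl3}, with each quotient modified by the extra $\CC^*$ factor. Your bookkeeping for the freely permuted irreducible strata (counts divided by three), the preserved strata when $n\in3\ZZ$ (the $m-1$ two-dimensional ones and the $\frac16(m-1)(m-2)$ maximal ones, with $\mu_3$ permuting columns of $\cM$ and multiplying on $\CC^*$), and the untwisting of $X_{TR}$ via the invariants $\gamma^3$, $x\gamma^{-1}$, $y\gamma^{-2}$ all agree with the paper's intent; deferring the explicit normal form $\{(u,v,w): u^3+v^3+3uv-w\neq 0,\ w\neq 0\}$ to a diagonalize-and-take-invariants computation is no less detailed than the paper's own treatment. (Incidentally, your insertion of the $\times\CC^*$ factor in the $m,n\notin3\ZZ$ irreducible components is correct and consistent with the dimension formula $(r-1)^2+1$ and the general description $\tilde\cX^*_{r,[\tau]}\cong(\CC^*\times\cX^*_{r,\tau})/S$ of Section~\ref{sec:max-dim}, even though the proposition's literal wording drops that factor in that case.)

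There is, however, one genuine flaw. You classify the type-1 partially reducible components among the strata ``on which $\mu_3$ acts freely (permuting components without fixing any)'' and apply your divide-by-three rule to them; that rule would yield $\frac13\left[\frac{n-1}2\right]\left[\frac{m-1}2\right]$ components, not the $\left[\frac{n-1}2\right]\left[\frac{m-1}2\right]$ you assert, so your argument is internally inconsistent at this point. In fact each component $X_{PR}^{1,i}$ is individually preserved by $\mu_3$: under the identification $X_\pi\cong\tilde\cX_2^*$ of Proposition~\ref{prop:8.1}, the action of $\varpi$ is the restriction of the connected scaling $\CC^*$-action on $\tilde\cX_2$, so it cannot move a component; in the coordinates $(r,\lambda)$ of Proposition~\ref{prop:7.3} it fixes $r$ and multiplies $\lambda$ by a cube root of unity, which is exactly how the proof of Proposition~\ref{prop:psl3} handles these components. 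The correct count (unchanged, not divided by three) and isomorphism type then follow from Proposition~\ref{prop:quotient}: $((\CC-\{0,1\})\times\CC^*\times\CC^*)/\mu_3\cong(\CC-\{0,1\})\times(\CC^*)^2$. This is the same preserved-component mechanism you correctly apply to the type-2 components, so the repair is local and the rest of your proposal stands.
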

  
To prove this proposition, one may use the  isomorphism  between $\tilde\cX_3$ 
and $(\cX_3\times \CC^*)/\mu_3$  and the proof  is analogous to the discussion 
in the proof of Proposition~\ref{prop:psl3}. Namely, the components of $\cX_3$ 
that are invariant (or not) by the action of $\mu_3$ correspond to the 
components of $\cX_3\times \CC^*$ that are invariant (or not), and the 
computation of the quotients of the invariant ones is changed by the factor 
$\CC^*$.

Finally, the K-theory class is $[\tilde\cX_3]=(\LL-1)[\bar\cX_3]$.


\begin{thebibliography}{11}

\bibitem{CS} M. Culler and P. Shalen, \emph{Varieties of group representations and
splitting of $3$-manifolds}, {Annals of Math.} \textbf{117} (1983) 109--146.


\bibitem{hausel-thaddeus} T. Hausel and M. Thaddeus,
\emph{Mirror symmetry, Langlands duality and Hitchin systems}, Invent. Math. \textbf{153} (2003) 197--229.

\bibitem{HeusenerP}
{M. Heusener and  J.~Porti}, 
\textsl{Representations of knot groups into $\SL(n, \CC)$ and twisted Alexander polynomials}, {ArXiv:1406.3705}

\bibitem{HPS}
{M. Heusener, J.~Porti and E. Su\'arez}, \textsl{Deformations of reducible representations of $3$-manifold groups 
into $\SL(2,\CC)$}. J. Reine Angew. Math. \textbf{530} (2001) 191--227. 

\bibitem{hitchin}
N.J. Hitchin, \emph{The self-duality equations on a Riemann surface}, Proc. London Math. Soc. (3) \textbf{55} (1987) 59--126.

\bibitem{KM}   
T. Kitano and T. Morifuji. Twisted Alexander polynomials for irreducible
$\SL(2,\CC)$-representations of torus knots.
Ann. Sc. Norm. Super. Pisa Cl. Sci. (5), 11(2):395–406, 2012

\bibitem{Lawton} S. Lawton,
Minimal affine coordinates for $\SL(3, \CC)$-character varieties of free groups,
J. Algebra \textbf{320} (2008) 3773--3810.

\bibitem{LM} S. Lawton and  V. Mu\~noz,
E-polynomial of the  $\SL(3, \CC)$-character variety of free groups, ArXiv:1405.0816

\bibitem{lomune} M. Logares, V. Mu\~noz and P.E. Newstead, 
\emph{Hodge polynomials of $\SL(2,\mathbb{C})$-character varieties for curves of small genus}, 
Rev. Mat. Complut. \textbf{26} (2013) 635--703. 

\bibitem{LuMa}
{A.~Lubotzky and A.~Magid},  
\textsl{Varieties of representations of finitely generated groups}, Mem. Amer. Math. Soc. \textbf{58} (1985). 

\bibitem{Martin-Oller} J. Mart\'{\i}n-Morales and A-M. Oller-Marc\'en, 
\emph{On the varieties of representations and characters of a family of one-relator subgroups},
Topol. Appl. \textbf{156} (2009) 2376--2389.

\bibitem{Munoz} V. Mu\~noz, \textsl{The $\SL(2,\CC)$-character varieties of torus knots},
Rev. Mat. Complut.  \textbf{22} (2009) 489--497.

\bibitem{Rolfsen} D. Rolfsen, \textsl{Knots and links}, Publish or Perish, Houston 1990.


\end{thebibliography}
\end{document}